\documentclass[oneside]{amsart}
% -----------------------------------------------------------

\usepackage{amsmath, amsthm, amsfonts}
\usepackage{amssymb}
\usepackage[utf8]{inputenc}
\usepackage{latexsym}
\usepackage{verbatim}
\usepackage{url}

\usepackage{textcomp}
\usepackage[all,cmtip]{xy}
\usepackage{color}
\usepackage{hyperref}
\usepackage{tabularx}
\usepackage[all,cmtip]{xypic}
\usepackage{enumerate}
\usepackage{amssymb}
\usepackage{array}

%\usepackage{geometry}

% -----------------------------------------------------------
\DeclareMathAlphabet{\mathfr}{U}{euf}{m}{n}
% -----------------------------------------------------------

\newtheorem{theorem}{Theorem}[section]
\newtheorem*{theorem*}{Main Theorem}

\newtheorem{proposition}[theorem]{Proposition}
\newtheorem{corollary}[theorem]{Corollary}

\newtheorem{lemma}[theorem]{Lemma}

\theoremstyle{remark}
\newtheorem{remark}[theorem]{Remark}
\newtheorem{definition}[theorem]{Definition}
\newtheorem{example}[theorem]{Example}

% -----------------------------------------------------------

% Added by Xevi

\newcommand\acc[2]{\ensuremath{{}^{#1}\hskip-0.3ex{#2}}}

\newcommand{\on}{\operatorname }

\newcommand{\lra}{\longrightarrow}
\newcommand{\ra}{\rightarrow}

\newcommand{\Q}{\mathbb Q}
\newcommand{\Qbar}{{\overline{\mathbb Q}}}
\newcommand{\kbar}{{\bar{k}}}
\newcommand{\Gal}{\mathrm{Gal}}
\newcommand{\R}{\mathbb R}
\newcommand{\Z}{\mathbb Z}

\newcommand{\FF}{\mathbb F}
\newcommand{\C}{\mathbb C}
\newcommand{\GL}{\mathrm{GL}}
\newcommand{\GSp}{\mathrm{GSp}}
\newcommand{\GO}{\mathrm{GO}}
\newcommand{\M}{\mathrm{M }}

\newcommand{\Aut}{\mathrm{Aut}}
\newcommand{\Ind}{\mathrm{Ind}}
\newcommand{\End}{\operatorname{End}}
\newcommand{\Hom}{\operatorname{Hom}}
\newcommand{\Frob}{\operatorname{Fr}}
\newcommand{\Jac}{\operatorname{Jac}}

\newcommand{\Br}{\mathrm{Br}}
\newcommand{\p}{\mathfrak{p}}
\newcommand{\Res}{\operatorname{Res}}
\newcommand{\Tr}{\operatorname{Tr}}

\newcommand{\I}{\mathrm{I}}
\newcommand{\II}{\mathrm{II}}
\newcommand{\III}{\mathrm{III}}

\newcommand{\Lie}{\mathrm{Lie}}

\newcommand{\Nm}{\operatorname{Nm}}
\newcommand{\fq}{\mathfrak{q}}
\newcommand{\Ll}{\mathfrak{L}}
\newcommand{\gl}{\mathfrak{l}}

\newcommand{\Pg}{\mathfrak{P}}
\newcommand{\Hg}{\mathbb{H}}
\newcommand{\Eg}{\mathbb{E}}

\newcommand{\cA}{\mathcal{A}}
\newcommand{\cB}{\mathcal{B}}

\newcommand{\cX}{\mathcal X}

\numberwithin{equation}{section}

\usepackage{url}

\newcommand{\QQ}{\mathbb{Q}}

\newcommand{\ZZ}{\mathbb{Z}}

%% macros for isomorphism classes of finite groups

\begin{document}
\title[Abelian varieties genuinely of $\GL_n$-type]{Abelian varieties genuinely of $\GL_n$-type}

\author{Francesc Fit\'e}
\address{Departament de Matemàtiques i Informàtica \\
Universitat de Barcelona and Centre de Recerca Matemàtica\\Gran via de les Corts Catalanes, 585\\
08007 Barcelona, Catalonia}
\email{ffite@ub.edu}
\urladdr{https://www.ub.edu/nt/ffite/}

\author{Enric Florit}
\address{Estudis d'Informàtica, Multimèdia i Telecomunicació\\
Universitat Oberta de Catalunya\\Rambla del Poblenou, 156\\
08018 Barcelona, Catalonia}
\email{efloritz@uoc.edu}
\urladdr{https://enricflorit.com/}

\author{Xavier Guitart}
\address{Departament de Matemàtiques i Informàtica\\
Universitat de Barcelona and Centre de Recerca Matemàtica\\Gran via de les Corts Catalanes, 585\\
08007 Barcelona, Catalonia}
\email{xevi.guitart@gmail.com}
\urladdr{https://www.ub.edu/nt/guitart}

\date{\today}

\begin{abstract} A simple abelian variety $A$ defined over a number field $k$ is called of $\GL_n$-type if there exists a number field of degree $2\dim(A)/n$ which is a subalgebra of $\End^0(A)$. We say that $A$ is genuinely of $\GL_n$-type if its base change $A_\kbar$ contains no isogeny factor of $\GL_m$-type for $m<n$. This generalizes the classical notion of abelian variety of $\GL_2$-type without potential complex multiplication introduced by Ribet. We develop a theory of building blocks, inner twists and nebentypes for these varieties.  When the center of $\End^0(A)$ is totally real, Chi, Banaszak, Gajda, and Kraso\'n have attached to $A$ a  compatible system of Galois representations of degree $n$ which is either symplectic or orthogonal. We extend their results under the weaker assumption that the center of $\End^0(A_\kbar)$ be totally real. We conclude the article by showing an explicit family of abelian fourfolds genuinely of $\GL_4$-type. This involves the construction of a family of genus 2 curves defined over a quadratic field whose Jacobian has trivial endomorphism ring and is isogenous to its Galois conjugate.
\end{abstract}
\maketitle
\tableofcontents

\section{Introduction}

Let $A$ be a simple abelian variety defined over a number field $k$. We denote by $G_k$ the absolute Galois group of $k$ and by $\End^0(A)$ the endomorphism algebra $\End(A)\otimes \Q$. Let $H$ and $t_A$ respectively denote the center and the Schur index of $\End^0(A)$. We set $n=2\dim(A)/(t_A[H:\Q])$ and say that $A$ is of \emph{$\GL_n$-type}. For a rational prime $\ell$, let 
$$
\varrho_\ell:G_k\rightarrow \Aut(V_\ell(A))
$$
denote the representation obtained from the action of $G_k$ on the rational $\ell$-adic Tate module of $A$. Let $E$ be \emph{any} maximal subfield of $\End^0(A)$. It is well known (see \cite{Rib76}, \cite{Shi98}) that to every prime $\Ll$ of $E$ lying over $\ell$ one can attach an $E_\Ll$ vector space $V_{\Ll}(A)$ of dimension $n$ and an absolutely irreducible $\Ll$-adic representation
$$
\varrho_\Ll:G_k\rightarrow \Aut(V_\Ll(A))
$$
(see \cite{Zar89} and \cite{Chi87}) with the property that 
$$
\varrho_\ell\simeq \bigoplus_{\Ll\mid \ell} \Res_{E_{\Ll}/\Q_\ell}(\varrho_{\Ll}).
$$
If $A$ is of the first kind, that is, if $H$ is a \emph{totally real field}, the works by Chi, Banaszak, Gajda and Kraso\'n (see \cite{Chi90},\cite{BGK06}, \cite{BGK10}) show that the image of $\varrho_\Ll$ is symplectic or orthogonal. More precisely, they show that $\varrho_\Ll$ preserves an alternating pairing if $A$ has Albert type $\I$ or $\II$, or a symmetric pairing if the Albert type is $\III$. If $H$ is a \emph{CM field}, then \cite{BKG21} and \cite{Chi91} show that $\varrho_\Ll$ preserves a hermitian pairing and hence the image of $\varrho_\Ll$ is contained in a unitary group.

Assume from now on that $A$ is \emph{genuinely of $\GL_n$-type}. This means that $A$ is simple, of $\GL_n$-type, and that its base change $A_\kbar$ to an algebraic closure of $k$ does not have any simple factor of $\GL_m$-type with $m<n$.
We show that if $A$ is genuinely of $\GL_n$-type, then $A_\kbar$ is necessarily isogenous to the power $B^r$ of a simple abelian variety $B$ over $\kbar$. We call such a $B$ the \emph{building block associated to $A$} and say that $A$ is \emph{geometrically of the first kind} if $B$ is of the first kind, that is, if $\End^0(B)$ has Albert type $\I$, $\II$, or $\III$ in the classification of division algebras with a positive involution. If we let $F\subseteq H$ denote the center of $\End^0(A_\kbar)$, this amounts to asking that $F$ is a totally real field (see Lemma~\ref{lemma: center-center-inclusion} for the inclusion $F \subseteq H$).

The purpose of this article is to show that, under the assumption of $A$ being genuinely of $\GL_n$-type, one can replace the hypothesis that $H$ be a totally real field by the weaker condition of $F$ being a totally real field and still recover the symplectic or orthogonal nature of $\varrho_\Ll$. Let $K/k$ denote the minimal extension over which $\End^0(A_K)=\End^0(A_\kbar)$, and let $\chi_\ell$ denote the $\ell$-adic cyclotomic character.

\phantomsection
\label{thm:main}
\begin{theorem*}
Let $A$ be genuinely of $\GL_n$-type and geometrically of the first kind. Let $\Sigma_A$ denote the set of primes of $H$ at which $\End^0(A)$ is split. Suppose that $\ell$ is a prime such that $\lambda \in \Sigma_A$ for every $\lambda\mid \ell$. There exist:
\begin{enumerate}[i)]
\item An absolutely irreducible representation $\varrho_\lambda :G_k\rightarrow \Aut(W_\lambda(A))$, where $W_\lambda(A)$ is an $H_{\lambda}$-vector space of dimension $n$, such that 
\begin{equation}\label{equation: rholrholambda}
\varrho_\ell\simeq \bigoplus_{\lambda\mid \ell }\Res_{H_\lambda/\QQ_\ell}(\varrho_\lambda)^{\oplus t_A}.
\end{equation}
\item A character $\varepsilon: \Gal(K/k)\rightarrow H^\times$ and an $H_\lambda$-bilinear, non-degenerate, $G_k$-equivariant pairing
	\[
		\psi_\lambda:W_\lambda (A)\times W_\lambda (A) \to H_\lambda(\varepsilon \chi_\ell).
              \]
            
            \end{enumerate}
            Let $B$ be the building block associated to $A$ and suppose that $\lambda$ lies over a prime of $F=Z(\End^0(B))$ at which $\End^0(B)$ is split. Then, $\psi_\lambda$ is alternating if $B$ has Albert type $\I$ or $\II$, and symmetric if $B$ has Albert type $\III$.
\end{theorem*}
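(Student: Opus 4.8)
The plan is to manufacture $\varrho_\lambda$ by Morita theory from the Tate module, to obtain the pairing $\psi_\lambda$ and the character $\varepsilon$ by descending a self-duality of the building block's compatible system from $G_K$ to $G_k$, and finally to pin down the parity by restricting back to $G_K$ and invoking the Chi--Banaszak--Gajda--Kraso\'n results for $B$.

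\emph{Part i).} Since $\lambda\in\Sigma_A$ one has $\End^0(A)\otimes_H H_\lambda\cong M_{t_A}(H_\lambda)$. I would split $V_\ell(A)$ according to the idempotents of $H\otimes\QQ_\ell=\prod_{\lambda\mid\ell}H_\lambda$ and apply the Morita equivalence $M_{t_A}(H_\lambda)\textrm{-mod}\simeq H_\lambda\textrm{-mod}$ to the $\lambda$-component $V_\ell(A)_\lambda$, producing an $H_\lambda$-vector space $W_\lambda(A)$ with $V_\ell(A)_\lambda\cong W_\lambda(A)^{\oplus t_A}$; a dimension count gives $\dim_{H_\lambda}W_\lambda(A)=n$. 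The $G_k$-action is $\End^0(A)$-linear, hence carried through the Morita functor, yielding $\varrho_\lambda$ together with \eqref{equation: rholrholambda}; its absolute irreducibility follows from that of the $\varrho_\Ll$ recalled in the introduction via an identification $V_\Ll(A)\cong W_\lambda(A)\otimes_{H_\lambda}E_\Ll$ for a maximal subfield $E\supseteq H$.

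\emph{Part ii), existence of $\psi_\lambda$ and $\varepsilon$.} Over $G_K$ one has $A_K\sim B^r$, and, because $\lambda$ restricts to a split prime $\mathfrak f$ of $F$, the representation $W_\lambda(A)|_{G_K}$ is $\sigma$-isotypic for the absolutely irreducible $G_K$-representation $\sigma$ over $H_\lambda$ obtained by scalar extension from the $\mathfrak f$-adic building-block representation of $B$ (this uses the structure of $\End^0(A)\subseteq\End^0(A_K)$ developed earlier). The Chi--Banaszak--Gajda--Kraso\'n theorem applied to $B$ endows $\sigma$ with a non-degenerate $G_K$-equivariant pairing valued in $H_\lambda(\chi_\ell)$, alternating if $B$ has type $\I$ or $\II$ and symmetric if $B$ has type $\III$; in particular $W_\lambda(A)|_{G_K}\cong W_\lambda(A)^{*}|_{G_K}\otimes H_\lambda(\chi_\ell)$. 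Now $\Gal(K/k)$ acts $H_\lambda$-linearly on the nonzero space $\Hom_{G_K}\!\bigl(W_\lambda(A),W_\lambda(A)^{*}\otimes H_\lambda(\chi_\ell)\bigr)$, and a nonzero eigenvector $\psi_\lambda$ for this action is exactly a $G_k$-equivariant pairing on $W_\lambda(A)$ valued in $H_\lambda(\eta\chi_\ell)$, for $\eta\colon\Gal(K/k)\to H_\lambda^{\times}$ the associated eigencharacter. Identifying $\eta$ with the nebentype $\varepsilon$ constructed earlier --- in particular checking that it is valued in $H^\times$ --- is the content of the theory of nebentypes developed above, which recognizes $\eta$ as the already-analyzed descent obstruction. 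Since $\varrho_\lambda$ is absolutely irreducible, $\ker\psi_\lambda$ is a $G_k$-subrepresentation, hence $0$, so $\psi_\lambda$ is non-degenerate; by the same token any two such pairings are $H_\lambda^\times$-proportional, so the transpose of $\psi_\lambda$ is $\pm\psi_\lambda$ and $\psi_\lambda$ is symmetric or alternating.

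\emph{The parity, and the main obstacle.} Restricting $\psi_\lambda$ to $G_K$ (where $\varepsilon$ is trivial) gives a nonzero $G_K$-equivariant self-pairing of $W_\lambda(A)\cong\sigma^{\oplus s}$ valued in $H_\lambda(\chi_\ell)$; expressing it against the standard pairing of $\sigma$ encodes it by an invertible matrix $(c_{ij})$, and a direct check shows that $\psi_\lambda$ has the same parity as the pairing of $\sigma$ --- hence the asserted one by Chi--Banaszak--Gajda--Kraso\'n for $B$ --- as soon as some diagonal entry $c_{ii}$ is nonzero. To see this, I would use that $(c_{ij})$ is governed by a polarization of $A_K\sim B^r$: under the equivalence of categories for $B$ it corresponds to a positive-definite $\ast$-Hermitian form on $\End^0(B)^{r}$ for the Rosati involution $\ast$ of $\End^0(B)$, and positive-definiteness forces its $\mathfrak f$-component to have nonvanishing diagonal, whence $(c_{ii})\neq 0$. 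The delicate point of the whole argument is precisely here and in the preceding paragraph: matching the $\Gal(K/k)$-eigencharacter that controls the descent of the self-duality with the arithmetically defined $\varepsilon$ (and its $H^\times$-rationality), and then ruling out --- via positivity of the polarization --- the possibility that the induced pairing is purely off-diagonal, so that its parity is genuinely inherited from the building block $B$.
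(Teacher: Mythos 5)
Your part i) follows the paper's construction in substance: split $V_\ell(A)$ along $H\otimes\Q_\ell$ and use $\End^0(A)\otimes_H H_\lambda\simeq\M_{t_A}(H_\lambda)$ to peel off $W_\lambda(A)$ from $V_\lambda(A)$. One caveat: the identification $V_\Ll(A)\simeq W_\lambda(A)\otimes_{H_\lambda}E_\Ll$ that you invoke for absolute irreducibility is not free; it requires the $H$-rationality of the traces of $\varrho_\Ll$, which the paper establishes by a Faltings dimension count (Proposition \ref{proposition: RibetCenter} and Lemma \ref{lemma: realVL}), whereas the introduction only records the $E_\Ll$-absolute irreducibility of $\varrho_\Ll$.

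The genuine gap is in part ii). The input you are missing is multiplicity one over $K$: $W_\lambda(A)$ remains absolutely irreducible as an $H_\lambda[G_K]$-module, and in fact $W_\lambda(A)|_{G_K}\simeq W_\gl(B)\otimes_{F_\gl}H_\lambda$ (Lemma \ref{lemma: realVL} and Proposition \ref{prop:Wl(B) x H is Wl(A)}); this is precisely what the hypothesis of being genuinely of $\GL_n$-type buys, via the maximality of $E$ in $\End^0(A_\kbar)$ and Faltings. You instead allow $W_\lambda(A)|_{G_K}\simeq\sigma^{\oplus s}$ with $s$ possibly greater than $1$ and try to repair the argument afterwards, but then several steps break. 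The space $\Hom_{G_K}\bigl(W_\lambda(A),W_\lambda(A)^\vee(\chi_\ell)\bigr)$ has dimension $s^2$, so a $\Gal(K/k)$-eigenvector over the non-algebraically-closed field $H_\lambda$ need not exist, and even if it does, its eigencharacter is neither identified with $\varepsilon$ nor shown to be $H^\times$-valued: in the paper this identification is Proposition \ref{proposition: innertwistsWlambda} (the inner-twist relation $W_\lambda(A)\simeq\varepsilon\otimes W_{\overline\lambda}(A)$) combined with the Weil-pairing relation $\overline\alpha=\chi_\ell(\p)/\alpha$ and the uniqueness statement of Lemma \ref{lemma: unique}, none of which your sketch supplies. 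Moreover, your fix for the parity via positivity of a polarization does not apply to the object you built: your $\psi_\lambda$ is an abstract eigenvector with no a priori relation to any polarization of $A_K$, and in the second-kind case the theorem is designed for (e.g.\ $H=\Q(\sqrt{-2})$ in Example \ref{ex:curve}) no polarization of $A$ can induce an $H_\lambda$-bilinear $G_k$-equivariant pairing with multiplier $\chi_\ell$ at all --- the multiplier is forced to be $\varepsilon\chi_\ell$, which is exactly why the nebentype enters --- so positivity cannot be brought to bear on your matrix $(c_{ij})$. Once multiplicity one is in place, the difficulties you flag disappear: the Hom space is a line, $\Gal(K/k)$ acts on it through a character which is $\varepsilon$ by Proposition \ref{proposition: innertwistsWlambda} and Lemma \ref{lemma: unique}, and $\psi_\lambda$ is an $H_\lambda^\times$-multiple of the scalar extension of $\psi_\gl$ from Theorem \ref{theorem:C-BGK types I II}, hence inherits its alternating or symmetric nature; this is the paper's proof of Theorem \ref{theorem:equivariant pairing}.
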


The proof of the above theorem spreads from \S\ref{section: basic properties} to \S \ref{section: symplective vs orthogonal}. In \S\ref{section: basic properties}, we show that an abelian variety $A$ is genuinely of $\GL_n$-type if there exists a number field $E$ of degree $2\dim(A)/n$ which is a maximal subfield for both $\End^0(A)$ and $\End^0(A_\kbar)$ (see Proposition \ref{prop:maximal of both}). Many of our subsequent results are grounded in this key property.

In \S\ref{section: building blocks}, we introduce the notion of a $\GL_n$-type building block. We show that $\GL_n$-building blocks relate to abelian varieties genuinely of $\GL_n$-type in the same spirit as the $n=2$ case treated in \cite{Pyl04}. If $B$ is the building block associated to $A$, then Proposition \ref{prop:genuinely GLn Brauer classes} asserts that $\End^0(A)$ and $\End^0(B)\otimes_F H$ define the same element in the Brauer group of $H$, and this allows one to relate the Albert types of $A$ and $B$ (see Proposition \ref{prop:first kind Schur 1 or 2}).

The object of \S\ref{section: innertwists} is the construction of the \emph{nebentype character} $\varepsilon$. We show that if $A$ is geometrically of the first kind, then the extension $H/F$ is Galois, and in fact abelian. Given $\lambda \in \Sigma_A$, we can then speak of the complex conjugate $\overline \lambda$. The nebentype character $\varepsilon$ is the only finite order  character with values in $H^\times$ such that $\varrho_\lambda \simeq \varepsilon \otimes \varrho_{\overline \lambda}$.

The construction of $\varrho_\lambda$ takes place in \S\ref{section: lambdaadic}. It amounts to showing that $\varrho_{\Ll}$ is realizable over $H_\lambda$. This is achieved by verifying that $\varrho_{\Ll}$ has $H$-rational Frobenius traces and that the 2-cohomology class potentially obstructing descent actually vanishes if $\lambda \in \Sigma_A$.

The pairing $\psi_\lambda$ is constructed in \S\ref{section: symplective vs orthogonal}. Let $\gl$ be the prime of $F$ lying below $\lambda$. As mentioned above, the maximality of $E$ in $\End^0(A)$ implies that $\varrho_\Ll$, and hence $\varrho_\lambda$, is absolutely irreducible.  The maximality of $E$ in $\End^0(A_\kbar)$ then implies that a certain absolutely irreducible $\gl$-adic representation $\varrho_\gl$ of $G_K$ attached to a model over $K$ of the building block $B$ is a realization over $F_\gl$ of $\varrho_\lambda|_{G_K}$. Exploiting this, we retrieve $\psi_\lambda$ from the $F_\gl$-bilinear, non-degenerate, $G_K$-equivariant pairing attached by Chi, Banaszak, Gajda and Kraso\'n to $B$. Given the absolute irreducibility of $\varrho_\lambda$, the $G_k$-equivariance is then proven by means of twisting by $\varepsilon$ and applying Schur's lemma. 

In \S\ref{section:family} we exhibit an explicit family of abelian fourfolds of $\GL_4$-type. These are obtained as restrictions of scalars of $\GL_4$-type building blocks of dimension 2 defined over a quadratic field. These building blocks are, in fact, Jacobians of genus 2 curves. In \S\ref{section: criterion} we provide a simple criterion to show that a fourfold $A$ in the family of \S\ref{section:family} is indeed an abelian fourfold genuinely of $\GL_4$-type. In Example \ref{ex:curve}, we apply this to the case where $A/\Q$ is the restriction of scalars of the Jacobian of the curve
   \begin{align*}
 y^2 &= (-10{\sqrt{2}} + 18)x^6 - (33/2{\sqrt{2}} - 12)x^5 - (11/2{\sqrt{2}} + 21)x^4\\ &- (3{\sqrt{2}} + 24)x^3 - (5{\sqrt{2}} + 6)x^2 - 2{\sqrt{2}}x.
\end{align*}
By the construction of $A$, we know that the center $H$ of $\End^0(A)$ is the field $\Q(\sqrt{-2})$. This provides an explicit example of abelian variety to which our main result applies, but not the previous results by Chi, Banaszak, Gajda and Kraso\'n. 

\textbf{Notation and terminology.} For a field $k$, we denote by $\kbar$ its algebraic closure and by $G_k$ its absolute Galois group. If $V$ is an $F[G_k]$-module and $\chi:G_k\rightarrow F^\times$ is a character, we write $V(\chi)$ or $\chi \otimes V$ to denote the $F[G_k]$-module $V\otimes_F V_\chi$, where $V_\chi$ the $F[G_k]$-module affording $\chi$. 

Any finite extension $L$ of $k$ considered is assumed to be contained in $\kbar$. For an abelian variety $A$ defined over $k$, we write $\End^0(A)$ to denote $\End(A)\otimes \Q$. If $L/k$ is a field extension, then $A_L$ is the base change of $A$ from $k$ to $L$. 

If $F$ is a field and $\cX$ is a simple $F$-algebra, let $c(\cX)$ denote its  \emph{capacity}, that is, the unique positive integer $c$ such that $\cX\simeq \M_c(\cX')$, where $\cX'$ is a division algebra. We write $\Br(F)$ for the Brauer group of $F$, and, if $\cX$ is a central simple $F$-algebra, we denote by $[\cX]$ its class in $\Br(F)$ and by $t_\cX$ its Schur index. Also, if $A$ is a simple abelian variety, we denote by $t_A$ the Schur index of $\End^0(A)$.

\textbf{Acknowledgements.} Thanks to Ariel Pacetti for sharing with us the Schur's lemma approach to prove Theorem \ref{theorem:equivariant pairing}. Thanks to Lassina Dembelé for challenging us to find an explicit example of a generic abelian surface defined over a quadratic field isogenous to its Galois conjugate, or, in the language of this article, a $\GL_4$-type building block of dimension 2. This arose our interest in setting a general theory for these varieties. Thanks to the anonymous referee for suggestions that improved the exposition of the article, and for removing unnecessary hypotheses in Proposition \ref{prop:criterion}.  All three authors were partially supported by grants PID2022-137605NB-I00 and 2021 SGR 01468. Fité and Guitart acknowledge support from the María de Maeztu program CEX2020-001084-M. Fité was additionally
supported by the Ramón y Cajal fellowship RYC-2019-027378-I. Florit was additionally supported by the Spanish Ministry of Universities (FPU20/05059).

%\textbf{Data Availability Statement.}  No datasets were generated or analysed in this work.

%\textbf{Conflict of Interest.} The authors declare that they have no conflict of interest.

%\textbf{Funding.} This work received financial support as detailed in the Acknowledgements.

\section{Abelian varieties of $\GL_n$-type}\label{section: basic properties}

Let $k$ be a subfield of $\C$ and $A$ an abelian variety defined over $k$. Suppose that there exists a number field $E$ and a $\Q$-algebra embedding  
\begin{equation}\label{equation: inclusion}
E\hookrightarrow \End^0(A).
\end{equation}
By functoriality $E$ acts on the singular cohomology group $H^1(A_{\C}^{\mathrm{top}},\Q)$, which is a $\Q$-vector space of dimension $2\dim A$; in particular, the quantity $n = 2\dim A/[E\colon\Q]$ is an integer. We say that such an $A$ is an \emph{abelian variety of $\GL_n$-type}. In particular, the notion of abelian variety of $\GL_1$-type coincides with the notion of abelian variety with complex multiplication in the sense of Shimura \cite{Shi98}, and for $n=2$ we recover the notion of abelian variety of $\GL_2$-type in the sense of Ribet \cite{Rib92}. The terminology comes from the fact that the $\ell$-adic rational Tate module of $A$ defines a representation with values in $\GL_n(E\otimes \Q_\ell)$.

By \cite[Prop. 2.1]{Rib92}, an abelian variety of $\GL_2$-type defined over $\Q$ is $\Q$-isogenous to the power of a simple abelian variety $B$ of $\GL_2$-type defined over $\Q$ whose endomorphism algebra is a number field of degree equal to $\dim(B)$. This result can be easily extended to the case in which the field of definition is a general number field, in which case one more possibility arises (see \cite[Prop. 1.5]{Shim72}). 

\begin{proposition}\label{proposition: noliterat} 
Let $A$ be an abelian variety of $\GL_2$-type defined over $k$. Then one of the following possibilities holds:
\begin{enumerate}[i)]
\item $A$ is $k$-isogenous to the product of abelian varieties of $\GL_1$-type defined over $k$.
\item $A$ is $k$-isogenous to the power of a simple abelian variety $B$ of $\GL_2$-type defined over $k$ for which $\End^0(B)$ is either a number field of degree $\dim(B)$ or a division quaternion algebra over a number field of degree $\dim(B)/2$.
\end{enumerate}
\end{proposition}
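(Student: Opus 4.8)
The plan is to follow (a mild extension of) the argument of \cite[Prop.~2.1]{Rib92} and \cite[Prop.~1.5]{Shim72}: decompose $A$ up to $k$-isogeny into isotypic pieces and then control how the degree-$\dim A$ field $E$ sits inside the resulting endomorphism algebra. By Poincar\'e reducibility, $A$ is $k$-isogenous to $\prod_{i=1}^{s}B_i^{e_i}$ with the $B_i$ simple and pairwise non-$k$-isogenous, so that $\End^0(A)\cong\prod_{i=1}^{s}M_{e_i}(D_i)$, where $D_i=\End^0(B_i)$ is a division algebra with center $Z_i$, of degree $z_i$ over $\Q$, and Schur index $t_i$; thus $[D_i:\Q]=t_i^2z_i$, and, writing $\delta_i\geq 1$ for the rank of $H^1(B_{i,\C}^{\mathrm{top}},\Q)$ as a $D_i$-vector space, we have $2\dim B_i=\delta_it_i^2z_i$. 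Since $E$ is a field, each projection $E\to M_{e_i}(D_i)$ is a nonzero, hence injective, ring homomorphism, so $E$ acts faithfully on the block $H^1(B_{i,\C}^{\mathrm{top}},\Q)^{e_i}$. These blocks are $E$-subspaces of $H^1(A_{\C}^{\mathrm{top}},\Q)$, whose dimension over $E$ is $2\dim A/[E:\Q]=2$; hence $s\leq 2$, with every block of $E$-dimension $1$ when $s=2$, and the single block of $E$-dimension $2$ when $s=1$.

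The key input is the elementary estimate that, for any abelian variety $C$, a commutative semisimple $\Q$-subalgebra $\mathcal{C}$ of $\End^0(C)$ satisfies $\dim_\Q\mathcal{C}\leq 2\dim C$: indeed $\mathcal{C}$ acts faithfully on $V:=H^1(C_{\C}^{\mathrm{top}},\Q)$, and if $\mathcal{C}=\prod_jL_j$ is its decomposition into field factors with orthogonal idempotents $\varepsilon_j$, then $V=\bigoplus_j\varepsilon_jV$ with $L_j\hookrightarrow\End_\Q(\varepsilon_jV)$, so $\dim_\Q\mathcal{C}=\sum_j[L_j:\Q]\leq\sum_j\dim_\Q\varepsilon_jV=2\dim C$. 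We apply this inside $M_{e_i}(D_i)\subseteq\End^0(B_i^{e_i})$ to the subalgebra $EZ_i$ generated by $E$ and $Z_i$. As $Z_i$ is central in $M_{e_i}(D_i)$, $EZ_i$ is a quotient of the \'etale algebra $E\otimes_\Q Z_i$, hence commutative semisimple; and its primitive idempotents cut $M_{e_i}(D_i)$ into blocks $\varepsilon_jM_{e_i}(D_i)\varepsilon_j\cong M_{r_j}(D_i)$ with $\sum_jr_j=e_i$, in which the field factor $L_j=EZ_i\varepsilon_j$ is a subfield containing the center $Z_i\varepsilon_j$, so $[L_j:Z_i]\leq r_jt_i$. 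Summing over $j$ gives $\dim_\Q(EZ_i)\leq e_it_iz_i$.

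The two cases now close by numerology. If $s=2$, then since $H^1(B_{i,\C}^{\mathrm{top}},\Q)^{e_i}$ is $1$-dimensional over $E$ we have $2e_i\dim B_i=[E:\Q]$, and the estimate forces $\dim_\Q(EZ_i)\leq 2e_i\dim B_i=[E:\Q]=\dim_\Q E$, whence $EZ_i=E$, i.e.\ $Z_i\subseteq E$; then $E$ is a subfield of $M_{e_i}(D_i)$ containing the center, so $[E:Z_i]\leq e_it_i$, and therefore $e_i\delta_it_i^2z_i=2e_i\dim B_i=[E:\Q]\leq e_it_iz_i$, forcing $\delta_i=t_i=1$. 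In this case $\End^0(B_i)=Z_i$ is a number field of degree $2\dim B_i$, so each $B_i$ is of $\GL_1$-type, which is possibility (i). If $s=1$, write $A\sim_k B^e$ with $B$ simple, $D=\End^0(B)$, $Z=Z(D)$ of degree $z$, Schur index $t$, and $2\dim B=\delta t^2z$; then $[E:\Q]=e\dim B\leq\dim_\Q(EZ)\leq etz$, so $\dim B\leq tz$, which together with $2\dim B=\delta t^2z$ yields $\delta t\leq 2$. The possibilities $(\delta,t)=(1,1),(2,1),(1,2)$ give, respectively: $\End^0(B)$ a number field of degree $2\dim B$, which is necessarily CM since a totally real subfield of $\End^0(B)$ has degree dividing $\dim B$, so $B$ is of $\GL_1$-type and we are again in possibility (i); $\End^0(B)$ a number field of degree $\dim B$; and $\End^0(B)$ a quaternion division algebra over a number field of degree $\dim B/2$. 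In the latter two cases $B$ is of $\GL_2$-type (in the quaternion case via a maximal subfield, which has degree $\dim B$ over $\Q$), and $A\sim_k B^e$; this is possibility (ii).

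The main difficulty lies in controlling the center $Z_i$ of $\End^0(B_i)$: a subfield of a central simple algebra need not contain the center, so one must either force it to (as happens when $s=2$) or bound $\dim_\Q(EZ_i)$ directly through the primitive-idempotent decomposition of the second paragraph — and this is precisely the only place where the hypothesis $n=2$, i.e.\ $[E:\Q]=\dim A$, is used in an essential way. Everything past that is bookkeeping with the Albert classification and Schur indices.
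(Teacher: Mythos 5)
Your proof is correct. Note, however, that the paper itself gives no proof of Proposition \ref{proposition: noliterat}: it is quoted from \cite[Prop.~2.1]{Rib92} and \cite[Prop.~1.5]{Shim72} and then used as the base case of the induction proving Proposition \ref{proposition: deck}. So your argument is genuinely independent, and it also differs in method from the paper's treatment of the general $\GL_n$ case: there, the key step is to show that the commutant of $E$ in $\End^0(A)$ is a division algebra (ruling out non-isogeny endomorphisms commuting with $E$ by the inductive hypothesis) and then to identify $E$ with its commutant, so that $\End^0(A)\simeq \M_r(Q)$ with $E$ maximal; you instead bound the $\Q$-dimension of the commutative \'etale subalgebra $EZ_i$ through its primitive idempotents, cutting $\M_{e_i}(D_i)$ into blocks $\M_{r_j}(D_i)$ and using that a subfield containing the center of a central simple algebra has degree at most the reduced degree. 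This yields the numerical constraint $\delta_i t_i\le 2$ (resp.\ $\le 1$ in the two-factor case), from which the CM, real/CM field, and quaternionic alternatives are read off directly; in particular your argument produces the quaternionic possibility over a general number field $k$ without any appeal to Albert's classification beyond Schur-index bookkeeping, whereas the paper's inductive proof is what scales to all $n$. Two small points worth making explicit: you use (as the paper implicitly does) that the embedding $E\hookrightarrow\End^0(A)$ is unital, which is what makes the projections of $E$ to the isotypic factors nonzero and hence injective; and in the two-factor case the detour through $EZ_i=E$ is not needed, since the same bound $\dim_\Q(EZ_i)\le e_it_iz_i$ already gives $\delta_it_i\le 1$ there as well.
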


The next result is a generalization of the previous proposition for abelian varieties of $\GL_n$-type.
 
\begin{proposition}\label{proposition: deck}
Let $A$ be an abelian variety of $\GL_n$-type defined over $k$. Then one of the following possibilities holds:
\begin{enumerate}[i)]
\item $A$ is $k$-isogenous to a product $A_1\times \dots \times A_r$ of abelian varieties defined over $k$, where each $A_i$ is of $\GL_{n_i}$-type for some $n_i<n$.
\item $A$ is $k$-isogenous to the power of a simple abelian variety $B$ of $\GL_n$-type defined over $k$ for which $\End^0(B)$ is a division algebra of Schur index $t_B$ over a number field $H$ of degree $2\dim(B)/(nt_B)$, and $t_B$ is a divisor of $n$.
\end{enumerate}
\end{proposition}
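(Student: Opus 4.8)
The plan is to reduce to the isogeny-decomposition of $A_{\kbar}$ into simple factors and analyze how the field $E$ of degree $2\dim(A)/n$ in $\End^0(A)$ distributes among them, then descend back to $k$ using the Galois action.

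First I would write $A \sim \prod_i B_i^{e_i}$ up to $k$-isogeny with the $B_i$ pairwise non-$k$-isogenous simple abelian varieties over $k$. Since $E$ is a field embedded in $\End^0(A) \cong \prod_i \mathrm{M}_{e_i}(\End^0(B_i))$, projecting $E$ to each factor gives a $\Q$-algebra map $E \to \mathrm{M}_{e_i}(\End^0(B_i))$, which is injective because $E$ is a field. If there is more than one factor ($r \geq 2$ in the notation of the decomposition, or $e_i \geq 2$ for some $i$, or $\End^0(B_i)$ is "too big"), I want to argue that each $A_i := B_i^{e_i}$ is of $\GL_{n_i}$-type with $n_i < n$. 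The point is that $E$ embeds in $\End^0(A_i)$, so $A_i$ is of $\GL_{n_i}$-type with $n_i = 2\dim(A_i)/[E:\Q]$; and since $\sum_i 2\dim(A_i) = 2\dim(A)$ with at least two nonzero summands, each $n_i < n$. This handles case i); so from now on assume $A \sim B^e$ with $B$ simple over $k$, i.e. $\End^0(A) \cong \mathrm{M}_e(\End^0(B))$ and $D := \End^0(B)$ a division algebra with center $H$ and Schur index $t_B$, so $\dim_H D = t_B^2$ and $\dim_\Q D = t_B^2 [H:\Q]$.

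The remaining work is to pin down the numerics in case ii). We have $2\dim(A) = e \cdot 2\dim(B)$ and $2\dim(B) = e_B t_B [H:\Q]$ for some positive integer $e_B$ (the capacity-type integer measuring how $H^1(B)$ decomposes as a $D$-module); concretely $\dim_\Q H^1(B_{\C},\Q) = 2\dim(B)$ is a multiple of $t_B[H:\Q]$ because $H^1(B)$ is a module over the division algebra $D$. Now $E \hookrightarrow \mathrm{M}_e(D)$ is a maximal-type embedding: $[E:\Q] = 2\dim(A)/n$. A field embedded in $\mathrm{M}_e(D)$ has degree dividing $e \cdot t_B [H:\Q]$ (any subfield of a central simple algebra of degree $d^2$ over its center has degree over $\Q$ dividing $d$ times the degree of the center; here the algebra $\mathrm{M}_e(D)$ has degree $(et_B)^2$ over $H$). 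Combining $[E:\Q] = e\cdot e_B t_B[H:\Q]/n$ with this divisibility, and using that in case i) we have excluded proper sub-factors, I would show $e_B = 1$ (so $2\dim(B) = t_B[H:\Q]$, i.e. $H$ is a maximal subfield of $D$ and $[H:\Q] = 2\dim(B)/t_B$), hence $[H:\Q] = 2\dim(B)/(n t_B) \cdot n$... — more carefully: from $n = 2\dim(A)/[E:\Q] = e\, e_B t_B[H:\Q]/[E:\Q]$ and $[E:\Q]$ dividing $e t_B [H:\Q]$, one gets $e_B \mid n$; the hypothesis that no proper factor is of smaller $\GL$-type forces $e_B=1$, giving $[H:\Q] = 2\dim(B)/(n t_B)$ as claimed, and then $t_B \mid n$ follows since $n = [E:\Q]/(\text{something})$... specifically $[E:\Q] = e t_B [H:\Q] = (e t_B)\cdot 2\dim(B)/(nt_B)$, so $n \mid e \cdot 2\dim(B)/... $; the cleanest route is: $E$ maximal in $\mathrm{M}_e(D)$ means $[E:\Q] = et_B[H:\Q]$, hence $n = 2\dim(A)/[E:\Q] = e\,e_B t_B[H:\Q]/(et_B[H:\Q]) = e_B$, so $n = e_B$ and $t_B \mid n$ becomes $t_B \mid e_B$, which holds iff... no — instead $e_B=1$ forces $n=1$ which is wrong, so in fact $E$ need not be maximal in $\mathrm{M}_e(D)$, only in $D$ itself after the projection argument; I would run the argument on $B$ rather than $A$, concluding $n = 2\dim(B)/[E':\Q]$ for the image field $E'$ and $[E':\Q] = 2\dim(B)/n$ with $E' \hookrightarrow D$ a subfield, whence $[E':\Q] \mid t_B[H:\Q]$, giving $t_B[H:\Q]/[E':\Q]$ an integer; writing $2\dim(B) = c\,t_B[H:\Q]$ with $c$ the capacity gives $n = c t_B[H:\Q]/[E':\Q]$ and then the genuine/minimality hypothesis of the decomposition forces $c=1$, i.e. $[H:\Q] = 2\dim(B)/(nt_B)$ and $t_B \mid n$.

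The main obstacle I expect is precisely this last numerical step: showing that the "capacity" factor $c$ equals $1$, equivalently that $H$ is a maximal subfield of $D = \End^0(B)$, equivalently that $B$ itself is of $\GL_n$-type with the same $n$ and not of $\GL_{n'}$-type for $n' < n$. If $c > 1$ then $E'$ sits in $D$ with $[E':\Q] = 2\dim(B)/n$ strictly smaller than $t_B[H:\Q]$, and one must rule this out — but a priori $B^e$ could then contain, after a further isogeny using an idempotent of $\mathrm{M}_e(D)$ that is not conjugate into a single block, a factor of $\GL$-type strictly between, contradicting case i) being the only alternative. Making this dichotomy airtight — that the only way $E$ fails to be maximal in the appropriate algebra is for $A$ to split off a lower-$\GL$-type factor over $k$ — is the heart of the argument, and it rests on the interplay between subfields of matrix algebras over division algebras and the idempotent decompositions they induce on $H^1(A)$. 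Everything else is bookkeeping with dimensions, Schur indices, and the observation $t_B^2 \mid \dim_\Q D$.
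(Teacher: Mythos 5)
Your reduction to the isotypic case via the Poincar\'e decomposition matches the paper's first step, but the heart of the proof is missing, and you say so yourself (``making this dichotomy airtight \dots is the heart of the argument''). The missing idea is the paper's inductive commutant argument: one lets $D'=C_{\End^0(A)}(E)$ and shows $D'$ is a \emph{division} algebra, because any non-invertible endomorphism commuting with $E$ has as image an abelian subvariety $B'$ on which $E$ acts, so $B'$ is of $\GL_d$-type with $d<n$, and the induction hypothesis (base case $n=2$) then exhibits $A$ as a power of a variety of $\GL_d$-type, contradicting the standing assumption that case i) fails. Maximality of $E$ in $D'$ (otherwise a larger subfield again makes $A$ of smaller $\GL$-type) forces $E=D'$, so $E$ is a maximal subfield of $\End^0(A)\simeq \M_r(Q)$ containing the center $H$; only then do the numerics follow: $[E\colon H]=rt_B$, hence $2\dim(B)=nt_B[H\colon\Q]$, and $t_B\mid n$ because $H^1(B_\C^{\mathrm{top}},\Q)$ is a module over the division algebra $\End^0(B)$, so $t_B^2[H\colon\Q]$ divides $2\dim(B)$. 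Your proposal has no substitute for this step: there is no ``projection'' of $E\subseteq \M_e(D)$ to a subfield $E'\subseteq D$ of degree $2\dim(B)/n$ (your injectivity argument applies to the isotypic components of $A$, not to the division algebra $D$ inside a single matrix block), so the field $E'$ on which your bookkeeping rests is exactly what has to be constructed, and it is constructed in the paper via the centralizer, not a projection.

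Moreover, the numerical scheme you propose is internally inconsistent. With your normalization $2\dim(B)=c\,t_B[H\colon\Q]$, the conclusion of case ii) is $[H\colon\Q]=2\dim(B)/(nt_B)$, i.e.\ $c=n$, not $c=1$; forcing $c=1$ would give $[H\colon\Q]=2\dim(B)/t_B$ and hence $n=1$, the very problem you notice midway and then reinstate at the end. Likewise the divisibility $t_B\mid n$ is asserted but never derived; it is not a formal consequence of $2\dim(B)$ being a multiple of $t_B[H\colon\Q]$ (the module structure over $D$ gives the stronger statement that $t_B^2[H\colon\Q]$ divides $2\dim(B)$, and it is this together with $2\dim(B)=nt_B[H\colon\Q]$ that yields $t_B\mid n$). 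So the proposal correctly handles case i) and correctly locates the difficulty, but the decisive step --- that failure of maximality of $E$ in $\End^0(A)$ produces, over $k$ and via $E$-stable abelian subvarieties and induction on $n$, a factor of smaller $\GL$-type --- is not supplied, and the dimension count as written would not give the stated conclusion even if it were.
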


\begin{proof}
We will show the result by induction on $n$. We may regard Proposition \ref{proposition: noliterat} as the base case of the induction. 

By Poincaré decomposition theorem we have that $A$ is isogenous over $k$ to a product $A_1\times \dots \times A_r$ of isotypic abelian varieties $A_i$ defined over $k$ such that $\Hom(A_i,A_j)=0$ if $i\not = j$. From \eqref{equation: inclusion}, we obtain an inclusion of $E$ into $\End^0(A_i)$, and thus $E$ acts on $A_i$ up to $k$-isogeny. Fix an embedding of $k$ into $\C$. By functoriality, $E$ acts on the singular cohomology group $H^1(A_{i,\C}^{\mathrm{top}},\Q)$, which is a $\Q$-vector space of dimension $2\dim(A_i)$. Therefore there exist integers $n_i\geq 1$ such that $n_i[E:\Q]=2\dim(A_i)$. In particular $A_i$ is an abelian variety of $\GL_{n_i}$-type.
If $r\geq 2$, the equality
$$
\sum_{i=1}^{r} n_i= \sum_{i=1}^r \frac{2\dim(A_i)}{[E:\Q]}=\frac{2\dim(A)}{[E:\Q]}=n
$$ 
shows that $n_i<n$.

Assume from now on that $r=1$ and that $A$ is not $k$-isogenous to the power of an abelian variety of $\GL_d$-type for any $d<n$.
Let $D$ denote the commutant of $E$ in $\End^0(A)$. We claim that $D$ is a division algebra. To prove this claim we need to show that if $\lambda$ is an endomorphism of $A$ which commutes with $E$, then it is an isogeny. Suppose it were not, so that $\dim(B)<\dim(A)$, where $B$ denotes the image of $\lambda$. Because the number field $E$ acts on $A$ and commutes with $\lambda$, it acts on $B$, and by functoriality on $H^1(B_{\C}^{\mathrm{top}},\Q)$. In particular, there is an integer $d\geq 1$ such that $d[E:\Q]=2\dim(B)$ and $B$ is of $\GL_d$-type. Since $A$ is isotypic, we have that $B$ is isotypic. By induction on $n$, we have that $B$ is $k$-isogenous to the power of an abelian variety of (at least) $\GL_d$-type. Since any power of an abelian variety of $\GL_d$-type is of $\GL_d$-type, we deduce that $A$ is of $\GL_d$-type. However, the inequality 
$$
d=\frac{2\dim (B)}{[E:\Q]}<\frac{2\dim (A)}{[E:\Q]}=n
$$
contradicts our assumption on $A$.  

Note that $E$ is a maximal subfield of $D$, since $A$ is not of $\GL_d$-type for any $d<n$. Therefore $E$ is its own commutant in $D$. Since $D$ was the commutant of $E$, we obtain that $E$ and $D$ coincide. In particular, the center $H$ of $\End^0 (A)$ is a subfield of $E$. We have that $\End^0(A)\simeq \M_m(Q)$, where $Q$ is a division algebra with center $H$. Therefore, $A$ is the $m$th power of an abelian variety $B$ such that $\End^0(B)\simeq Q$. Since $E$ is a maximal subfield of $\End^0(A)$, we have that $[E:H]=mt_B$, where $t_B$ denotes the Schur index of $Q$ over $H$. In particular, we get
\begin{equation}\label{equation: dimB}
2\dim(B)=\frac{2\dim(A)}{m}=\frac{n[E:\Q]}{m}={nt_B}[H:\Q]\,.
\end{equation}
We may now look at $H^1(B^{\mathrm{top}}_\C,\Q)$ as a $Q$-vector space. This implies that $t^2_B[H\colon \Q]$ divides $2\dim(B)$, which by the above implies that $t_B\mid n$.
It remains to show that $B$ is of $\GL_n$-type, but this follows from \eqref{equation: dimB} and the fact that $Q$ contains (maximal) subfields of degree $t_B[H:\Q]$ over $\Q$. 
\end{proof}

\begin{remark}\label{remark: maxfield}
In the course of the above proposition, we have shown that if $A$ is an abelian variety of $\GL_n$-type falling in case ii) of the proposition, then $E$ is a maximal subfield of $\End^0(A)$. In particular, $E$ contains the center $H$ of $\End^0(A)$. If $A$ is simple and $t_A$ denotes the Schur index of $\End^0(A)$ over $H$,  we have
$$
2\dim(A)=n[E:\Q]=nt_A[H:\Q].
$$
Indeed, if $A$ is simple, then $\End^0(A)$ is a division algebra over $H$ that has $E$ as a maximal subfield, hence $t_A=[E\colon H]$.
\end{remark}

\begin{example}\label{remark: explicitpos}
One easily sees that if $A$ is an abelian variety of $\GL_4$-type falling in case $i)$ of Proposition \ref{proposition: deck}, then $A$ is $k$-isogenous to the product of either:
\begin{enumerate}[a)]
\item four abelian varieties of $\GL_1$-type of equal dimension; or
\item two abelian varieties of $\GL_1$-type of dimension $\dim(A)/4$ and one of $\GL_2$-type of dimension $\dim(A)/2$; or  
\item two abelian varieties of $\GL_2$-type of equal dimension; or 
\item an abelian variety of $\GL_3$-type of dimension $3\dim(A)/4$ and an abelian variety of $\GL_1$-type of dimension $\dim(A)/4$. 
\end{enumerate}
\end{example}

\begin{example}\label{example: GL4Building}
Let $A$ be an abelian variety of $\GL_4$-type falling in case $ii)$ of Proposition \ref{proposition: deck}. By using Albert's classification of semisimple algebras endowed with a positive involution (\cite[\S21]{Mum74}), we see that $A$ is $k$-isogenous to the power of an abelian variety $B/k$ such that $\End^0(B)$ is either:
\begin{enumerate}[i)]
\item a totally real field of degree $\dim(B)/2$; or 
\item a totally indefinite or totally definite quaternion algebra over a totally real field of degree $\dim(B)/4$; or
\item a CM field of degree $\dim(B)/2$; or
\item a division quaternion algebra over a CM field of degree $\dim(B)/4$; or
\item a division algebra of degree $4$ over a CM field of degree $\dim(B)/8$.
\end{enumerate}
Abelian varieties $B$ defined over $k=\C$ of each of the types $i)$,$\dots$,$v)$ exist by \cite[Thm. 5]{Shi63}. 
\end{example}

\begin{example}
We give an explicit example of abelian variety $B$ falling in case $iii)$ of Example \ref{example: GL4Building}. It is well known that the genus of a curve
$$
C\colon y^m=f(x)\,,
$$
where $f(x)$ is a separable polynomial in $\Q[x]$ of degree $d$, is
$$
g=\frac{1}{2}((d-2)(m-1)+m-\gcd(m,d)).
$$ 
Taking $m=3$ and $d=5$, we find $g=4$. If $f(x)$ is sufficiently generic so that $\End^0(\Jac(C)_\Qbar)\simeq \Q(\zeta_3)$, then $B=\Jac(C)_{\Qbar}$ is an abelian variety with the desired property.
\end{example}

We can refine part ii) of Proposition \ref{proposition: deck} in certain situations.

\begin{proposition}\label{prop: k coprime E} Let $A$ be an abelian variety of $\GL_n$-type defined over $k$. Suppose that $k$ is a number field and that $[k\colon\Q]$ is coprime to $2[E\colon \Q]$. If $A$ falls in case $ii)$ of Proposition \ref{proposition: deck}, then $n$ is even and $2t_B$ divides $n$.
\end{proposition}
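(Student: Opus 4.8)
The plan is to reduce to the case $A=B$ and then to prove that the integer $m:=n/t_B$ is even: this gives both conclusions at once, since $n=mt_B$. The reduction is immediate from case ii) of Proposition \ref{proposition: deck}. Write $A\sim_k B^r$ with $\End^0(B)=Q$ a division algebra of Schur index $t_B$ over its centre $H$; then $\End^0(A)\simeq\M_r(Q)$ still has Schur index $t_B$, and if $E'\subseteq Q$ denotes a maximal subfield then, by Remark \ref{remark: maxfield}, $[E':\Q]=t_B[H:\Q]$ divides $[E:\Q]=rt_B[H:\Q]$. Hence $[k:\Q]$ is also coprime to $2[E':\Q]$, and since $2\dim(B)=n[E':\Q]$ the variety $B$ is of $\GL_n$-type with maximal field $E'$; so it suffices to prove the statement for $B$.

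I would first settle the parity of $n$ in the Albert types with totally real centre, by a Hodge-theoretic computation that also produces the archimedean data needed afterwards. As $E'$ is maximal in $Q$, the $\Q$-vector space $H^1(B_\C^{\mathrm{top}},\Q)$ is free of rank $n$ over $E'$, hence free of rank $nt_B$ over $H$; decomposing $\Lie(B_\C)$ along the central idempotents of $Q\otimes_\Q\C\simeq\prod_{\sigma\colon H\hookrightarrow\C}\M_{t_B}(\C)$ and setting $t_Bm_\sigma=\dim_\C\Lie(B_\C)_\sigma$, the Hodge decomposition of $H^1(B_\C^{\mathrm{top}},\C)$ together with the interchange of the $\sigma$- and $\overline\sigma$-components under complex conjugation gives $m_\sigma+m_{\overline\sigma}=n$ for every $\sigma$. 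When $H$ is totally real (Albert types $\I$, $\II$, $\III$) one may take $\sigma=\overline\sigma$ and conclude $n$ even; for $t_B=1$ this is the whole statement. The remaining content — $4\mid n$ in types $\II$ and $\III$, and the full assertion in type $\IV$ — fails over $\C$ (fake elliptic curves and simple CM abelian varieties realise $m$ odd), so it must use the hypothesis on $[k:\Q]$.

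For the arithmetic part I would argue by reduction modulo a prime $\mathfrak p$ of $k$ of good reduction, with residue field $\FF_q$, $q=p^f$; since $f=f(\mathfrak p\mid p)$ divides $[k:\Q]$ it is coprime to $2t_B[H:\Q]$. The reduction map embeds $Q=\End^0(B)$ into $\End^0(\widetilde B)$, which by Tate's theorem is $\End_{\Q_\ell[\Frob_q]}(H^1(B_\C^{\mathrm{top}},\Q)\otimes\Q_\ell)$, and $\Frob_q$ centralises the image of $Q$; as $Q$ is simple it embeds into a single block $\M_s(D)$ of $\End^0(\widetilde B)$, with $D=\End^0(C)$ for an $\FF_q$-simple factor $C$ of Weil number $\pi$. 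By Honda--Tate the local invariants $\mathrm{inv}_v[D]=\tfrac{v(\pi)}{v(q)}[\Q(\pi)_v:\Q_p]$ at $v\mid p$ have denominators dividing $f[\Q(\pi)_v:\Q_p]$, while at the real places they lie in $\{0,\tfrac12\}$; feeding this, via the compatibility of Brauer classes under $Q\hookrightarrow\M_s(D)$ and the coprimality of $f$ with $2t_B[H:\Q]$, together with the numbers $m_\sigma$ from the previous step, should pin down $[Q]\in\Br(H)$ tightly enough to force $m$ even. The step I expect to be the main obstacle is exactly this last one: converting the arithmetic of $[D]$ and of the embedding $Q\hookrightarrow\M_s(D)$ into the parity of the $Q$-rank $m$, uniformly across the four Albert types. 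A variant that avoids the subtleties of reduction is to run the same descent on $W=\Res_{k/\Q}(B)$: it carries $Q$ inside $\End^0(W)$, Proposition \ref{proposition: deck} and the Hodge computation apply to $W$ over $\Q$, and the coprimality of $[k:\Q]$ is what ensures that the obstruction to $m$ being odd is already visible on a $\Q$-isogeny factor of $W$.
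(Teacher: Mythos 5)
There is a genuine gap: the part of your argument that is supposed to use the hypothesis on $[k:\Q]$ is never carried out. You yourself flag the conversion of the Honda--Tate/Brauer-invariant data of the embedding $Q\hookrightarrow \M_s(D)$ into the parity of $m=n/t_B$ as "the main obstacle", and it is indeed nontrivial and unproven (it would have to work uniformly in all Albert types, including type $\IV$ where your Hodge step gives nothing); the alternative via $W=\Res_{k/\Q}(B)$ is only a gesture, and as phrased ("the Hodge computation applies to $W$ over $\Q$") it would at best give $t_B^2[H:\Q]\mid \dim_\Q H^1(W_\C^{\mathrm{top}},\Q)=nt_B[H:\Q][k:\Q]$, i.e.\ only $t_B\mid n$ after using coprimality --- a factor of $2$ short. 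The source of the shortfall is that your archimedean computation is done with $\Lie(B_\C)$, which forgets the field of definition: the factor $[k:\Q]$, which is exactly where the hypothesis enters, has been discarded.

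The paper's proof is a one-paragraph count on the tangent space \emph{over $k$}: $\Lie(B)$ is a $k$-vector space of dimension $\dim(B)$, hence a $\Q$-vector space of dimension $\dim(B)[k:\Q]=\tfrac{nt_B}{2}[H:\Q][k:\Q]$, and it is a module over the division algebra $Q=\End^0(B)$, so its $\Q$-dimension is divisible by $\dim_\Q Q=t_B^2[H:\Q]$. This gives $2t_B\mid n[k:\Q]$, and since $2t_B$ divides $2[E:\Q]$, coprimality of $[k:\Q]$ with $2[E:\Q]$ yields $2t_B\mid n$ (whence $n$ even). In other words, the missing idea is precisely to replace $H^1(B_\C^{\mathrm{top}},\Q)$ (or $\Lie(B_\C)$) by $\Lie(B)$ viewed over $\Q$ --- equivalently $\Lie(\Res_{k/\Q}B)$ as a module over the division algebra $Q$ --- which makes the whole reduction mod $\p$ and Honda--Tate machinery unnecessary; your half-page decomposition into $m_\sigma$'s only recovers the unconditional evenness of $n$ when $H$ is totally real and does not feed into the unfinished arithmetic step.
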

\begin{proof}
In the proof of Proposition \ref{proposition: deck} one may appeal to the space of tangent vectors $\Lie(B)$ instead of the singular cohomology space $H^1(B_{\C}^{\mathrm{top}},\Q)$. Then one finds that 
$$
\dim_\Q(\Lie(B))=\dim(B)[k:\Q]=\frac{nt_B}{2}[H:\Q][k:\Q]
$$ 
is divisible by $t^2_B[H:\Q]$, equivalently, that $2t_B$ divides $n[k:\Q]$. If $[k\colon\Q]$ is coprime to $2[E\colon \Q]$, then it is coprime to $2t_B$, which implies that $2t_B$ divides $n$.
\end{proof}

We now define the main object of study of this article.

\begin{definition}
  We say that an abelian variety $A$ defined over $k$ is \emph{genuinely of $\GL_n$-type} if it is simple, of $\GL_n$-type, and its base change $A_\kbar$ does not have any simple factor of $\GL_m$-type with $m<n$; in particular, $A_\kbar$ falls in case $ii)$ of Proposition~ \ref{proposition: deck}. 
\end{definition}

 If $A/k$ is genuinely of $\GL_n$-type and we denote by $E$ a subfield of $\End^0(A)$ with $n[E\colon \Q]=2\dim A$, then by Remark \ref{remark: maxfield} applied to $A$ and to $A_{\kbar}$ we have that $E$ is a maximal subfield of both $\End^0(A)$ and $\End^0(A_\kbar)$. This property in fact characterizes abelian varieties that are genuinely of $\GL_n$-type for some $n$.

\begin{proposition}\label{prop:maximal of both}
  A simple abelian variety $A$ defined over $k$ is genuinely of $\GL_n$-type for some $n$ if and only if $\End^0(A_\kbar)$ is simple and there exists a maximal subfield of $\End^0(A)$ which is also a maximal subfield of $\End^0(A_\kbar)$.
\end{proposition}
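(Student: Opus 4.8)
The plan is to prove the two implications separately; in both directions the real content is already supplied by Proposition~\ref{proposition: deck} and Remark~\ref{remark: maxfield}, and what is left is a short argument with centralizers together with a degree computation.

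Assume first that $A$ is genuinely of $\GL_n$-type, and let $E\subseteq\End^0(A)$ be a subfield with $n[E:\Q]=2\dim A$. By definition $A_\kbar$ falls in case ii) of Proposition~\ref{proposition: deck}, hence is $\kbar$-isogenous to a power $B^r$ of a simple abelian variety, so $\End^0(A_\kbar)\simeq\M_r(\End^0(B))$ is simple. Viewing $E$ as a subfield of $\End^0(A_\kbar)$ through $E\subseteq\End^0(A)\subseteq\End^0(A_\kbar)$, Remark~\ref{remark: maxfield} applied to $A_\kbar$ (which is of $\GL_n$-type via $E$ and, as just noted, lies in case ii)) shows that $E$ is a maximal subfield of $\End^0(A_\kbar)$. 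To transfer this down I would observe that the centralizer of $E$ in $\End^0(A)$ is contained in its centralizer in $\End^0(A_\kbar)$, which equals $E$; hence $E$ is its own centralizer in $\End^0(A)$, and being a field it is a maximal subfield there as well.

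For the converse, suppose $\End^0(A_\kbar)$ is simple and fix a subfield $E$ that is a maximal subfield of both $\End^0(A)$ and $\End^0(A_\kbar)$. Since $A$ is simple, $\End^0(A)$ is a division algebra with center $H$ and Schur index $t_A$, so $[E:\Q]=t_A[H:\Q]$, and $n:=2\dim A/[E:\Q]$ is a positive integer for which $A$ is of $\GL_n$-type. As $\End^0(A_\kbar)$ is simple, $A_\kbar$ is isotypic, say $\kbar$-isogenous to $B^r$ with $B$ simple and $\End^0(B)$ a division algebra with center $F$ and Schur index $t_B$; then $\End^0(A_\kbar)\simeq\M_r(\End^0(B))$ and maximality of $E$ forces $[E:\Q]=rt_B[F:\Q]$. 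Every simple factor of $A_\kbar$ is isogenous to $B$, and the least $m$ for which $B$ is of $\GL_m$-type is $2\dim B/(t_B[F:\Q])$, since the maximal subfields of $\End^0(B)$ have degree $t_B[F:\Q]$ over $\Q$. Using $\dim A=\dim A_\kbar=r\dim B$ one gets
$$
\frac{2\dim B}{t_B[F:\Q]}=\frac{2\dim A}{rt_B[F:\Q]}=\frac{2\dim A}{[E:\Q]}=n,
$$
so $A_\kbar$ has no simple factor of $\GL_m$-type with $m<n$, i.e.\ $A$ is genuinely of $\GL_n$-type.

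The step I expect to require the most care is pinning down that the integer $n$ is forced to be the same on both sides. In the forward direction one must genuinely exclude case i) of Proposition~\ref{proposition: deck} for $A_\kbar$: if $A_\kbar$ were isogenous to $\prod_i A_i$ with each $A_i$ of $\GL_{n_i}$-type and $n_i<n$, then a simple factor of an isotypic component $A_i\sim S_i^{m_i}$ would itself be of $\GL_m$-type for some $m\le n_i<n$ (the embedding $E\hookrightarrow\M_{m_i}(\End^0(S_i))$ bounds the maximal subfield degree of $\End^0(S_i)$ from below), contradicting the hypothesis. In the converse direction the displayed identity is exactly the bookkeeping matching $2\dim A/[E:\Q]$ with the minimal $\GL$-type of the building block $B$. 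Everything else — that a subfield of a simple algebra is maximal precisely when it equals its own centralizer, and that the endomorphism algebra of an isotypic abelian variety is a matrix algebra over a division algebra — is standard.
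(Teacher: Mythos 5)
Your proof is correct and takes essentially the same route as the paper: both directions rest on Proposition~\ref{proposition: deck} and Remark~\ref{remark: maxfield}, and your converse is exactly the paper's degree count $m=2\dim B/(t_B[F:\Q])=2\dim A/[E:\Q]=n$ after extracting $A_\kbar\sim B^r$ from the simplicity of $\End^0(A_\kbar)$ and $[E:F]=rt_B$ from maximality. The only cosmetic difference is in the forward direction, where the paper applies Remark~\ref{remark: maxfield} to $A$ itself to get maximality of $E$ in $\End^0(A)$, whereas you descend it from $\End^0(A_\kbar)$ via the centralizer inclusion $C_{\End^0(A)}(E)\subseteq C_{\End^0(A_\kbar)}(E)=E$ (and you spell out why $A_\kbar$ falls in case ii), which the paper builds into the definition); both are fine.
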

\begin{proof}
  We have already seen that if $A$ is genuinely of $\GL_n$-type and $E$ is a subfield of $\End^0(A)$ with $n[E\colon \Q]=2\dim A$, then $E$ is a maximal subfield of $\End^0(A)$ and $\End^0(A_{\kbar})$. Also, $\End^0(A_\kbar)$ is simple since $A_\kbar$ is isotypic. 

  Suppose now that $\End^0(A_\kbar)$ is simple and there exists a maximal subfield $E$ of $\End^0(A)$ which is also a maximal subfield of $\End^0(A_\kbar)$. Put $n = \frac{2\dim A}{[E\colon \Q]}$, so that $A$ is of $\GL_n$-type, and let $F$ be the center of $\End^0(A_\kbar)$. Since $\End^0(A_\kbar)$ is simple, we have an $F$-algebra isomorphism $\End^0(A_\kbar)\simeq \M_r(\mathcal{D})$ for some division algebra $\mathcal{D}$; this translates into $A_\kbar\sim B^r$, where $B/\kbar$ is simple with $\End^0(B)\simeq \mathcal{D}$. If we denote by $t_B$ the Schur index of $\End^0(B)$, maximal subfields of $\End^0(B)$ have degree $t_B[F\colon \Q]$, so that the minimum $m$ such that $B$ is of $\GL_m$-type is
  \begin{align*}
    m = \frac{2\dim B}{t_B[F\colon \Q]}.
  \end{align*}
  The fact that $E$ is a maximal subfield of $\End^0(A_\kbar)\simeq \M_r(\End^0(B))$ implies that $[E\colon F]= r t_B$. Hence, we see that
  \begin{align*}
    m = \frac{2r\dim B}{t_Br[F\colon \Q]}= \frac{2\dim A}{[E\colon\Q]} = n,
  \end{align*}
  from which we see that $A$ is genuinely of $\GL_n$-type.
\end{proof}

\section{$\GL_n$-type building blocks}\label{section: building blocks}

From now on let $k$ denote a number field. Throughout this section we make use of the following terminology, already used by Ribet \cite{Rib92} and Pyle \cite{Pyl04}.

\begin{definition}\label{def:k-variety}
An abelian variety $B/\kbar$ is called an \emph{abelian $k$-variety} if there exists a system of isogenies $\{\mu_s \colon {}^s B \ra B \}_{s\in G_k}$  such that ${}^s\varphi = \mu_s \cdot\varphi \cdot\mu_s^{-1}$ for every $\varphi\in \End^0(B)$.
  \end{definition}
  For convenience, we also recall a closely related terminology, introduced by Quer \cite{Que00}, that will be relevant in Section \ref{section:family}.
  \begin{definition}\label{def: completely defined}
    Let $L$ be a Galois extension of $k$ and let $B$ an abelian variety over $L$ such that $\End(B)=\End(B_\kbar)$. We say that $B$ is an \emph{abelian $k$-variety completely defined over $L$} if there exists a system of isogenies $\{\mu_s \colon {}^s B \ra B \}_{s\in \Gal(L/k)}$ defined over $L$  such that ${}^s\varphi = \mu_s \cdot\varphi \cdot\mu_s^{-1}$ for every $\varphi\in \End^0(B)$.
  \end{definition}

  We now introduce a generalization the notion of building block, first introduced in \cite{Pyl04}.
\begin{definition}\label{def: bb} An abelian $k$-variety $B/\kbar$ is called a \emph{$\mathrm{GL}_n$-type $k$-building block} if  $\End^0(B)$ is a central division algebra over a field $F$ with Schur index $t_B$ satisfying 
$$nt_B[F\colon \Q] = 2\dim(B).$$
\end{definition}
In our terminology, Pyle's building blocks correspond to $\GL_2$-type $\Q$-building blocks. Slightly abusing the terminology, we will say that an abelian variety $B/\kbar$ is a $k$-building block if it is a $\GL_n$-type $k$-building block for some $n$. Observe that a $k$-building block is just a simple abelian $k$-variety.

\begin{remark}
Observe that in a $\GL_n$-type building block, $t_B$ divides $n$. This follows from the fact that the division algebra $\End^0(B)$ acts on $H^1(B_\C^\mathrm{top},\Q)$ (cf. proof of Proposition \ref{proposition: noliterat}).
\end{remark}

\begin{lemma}\label{lemma: center-center-inclusion}
Let $A$ be a simple abelian variety genuinely of $\GL_n$-type, so that $A_{\bar k}\sim B^r$ for some simple $B/\kbar$. Let $H$ be the center of $\End^0(A)$, and $F$ be the center of $\End^0(B)$. Then $F\subseteq H$.
\end{lemma}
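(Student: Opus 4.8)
The plan is to exploit the key characterization from Proposition~\ref{prop:maximal of both}: if $A$ is genuinely of $\GL_n$-type, there is a number field $E$ that is a maximal subfield of both $\End^0(A)$ and $\End^0(A_\kbar)\simeq \M_r(\End^0(B))$. A maximal subfield of a central simple algebra always contains its center, so $E$ contains $H = Z(\End^0(A))$. Separately, I want to relate $E$ to $F = Z(\End^0(B))$ and to $H$ by tracking which endomorphisms of $A$ (viewed inside $\End^0(A_\kbar)$) commute with what.

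First I would recall that $\End^0(A)$ sits inside $\End^0(A_\kbar)$ as the subalgebra of $G_k$-invariants, and that $H = Z(\End^0(A))$. The center $F$ of $\End^0(A_\kbar)\simeq \M_r(\End^0(B))$ is, up to the isogeny $A_\kbar \sim B^r$, the scalar matrices with entries in $F$; concretely $F = Z(\End^0(B))$ embeds diagonally. Since $F$ is the center of the whole algebra $\End^0(A_\kbar)$, every element of $F$ commutes with every element of $\End^0(A_\kbar)$, and in particular with every element of the subalgebra $\End^0(A)$. Thus, if I can show $F \subseteq \End^0(A)$, then $F$ lands in the centralizer of $\End^0(A)$ inside $\End^0(A)$, which is exactly $H = Z(\End^0(A))$, giving $F \subseteq H$. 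So the crux is to show $F \subseteq \End^0(A)$, i.e. that the elements of $F$ are actually defined over $k$.

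To see $F \subseteq \End^0(A)$, I would argue that $F$ is $G_k$-stable: for $s \in G_k$, the conjugation action of $G_k$ on $\End^0(A_\kbar)$ is by $\Q$-algebra automorphisms, hence permutes the center; so $s$ maps $F$ to $F$ and acts on $F$ as a field automorphism. But $F$ is a number field (it is the center of a finite-dimensional division algebra over $\Q$ arising as an endomorphism algebra), and more to the point one can pin down the action: since $A_\kbar$ is isotypic with $\End^0(A_\kbar)$ simple, and the Galois action must be compatible with the module structure of the Tate module of $A$ which is defined over $k$, the action of $G_k$ on $Z(\End^0(A_\kbar))$ is trivial. Indeed, $E \subseteq \End^0(A)$ is $G_k$-fixed and contains $F$ (because $E$ is a maximal subfield of $\End^0(A_\kbar)$, it contains its center $F$); since $E$ is fixed pointwise by $G_k$, so is $F$. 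Hence $F \subseteq (\End^0(A_\kbar))^{G_k} = \End^0(A)$, and combined with the previous paragraph we conclude $F \subseteq H$.

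I expect the main obstacle to be the bookkeeping around the identification $F = Z(\End^0(B)) \hookrightarrow Z(\End^0(A_\kbar))$ under the isogeny $A_\kbar \sim B^r$, and making fully precise that $E$ (a maximal subfield of $\End^0(A_\kbar)$) contains the center $F$ — this is the standard fact that a maximal commutative subfield of a central simple algebra over a field $F$ necessarily contains $F$, applied with base field $F$ rather than $\Q$. Once that is in place, the rest is the short centralizer argument: $E \supseteq F$ and $E$ is $G_k$-fixed force $F \subseteq \End^0(A)$, and $F$ central in $\End^0(A_\kbar)$ forces $F \subseteq Z(\End^0(A)) = H$. I would write it as: $F \subseteq E \subseteq \End^0(A)$, and $F$ commutes with all of $\End^0(A_\kbar) \supseteq \End^0(A)$, so $F \subseteq Z(\End^0(A)) = H$.
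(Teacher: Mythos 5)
Your proof is correct and follows essentially the same route as the paper: you use the maximality of $E$ in $\End^0(A_\kbar)$ (via Proposition~\ref{prop:maximal of both}, where the paper invokes Remark~\ref{remark: maxfield} applied to $A_\kbar$) to get $F\subseteq E\subseteq \End^0(A)$, and then the centrality of $F$ in $\End^0(A_\kbar)$ to conclude $F\subseteq Z(\End^0(A))=H$. The detour about the $G_k$-action on $Z(\End^0(A_\kbar))$ being trivial is unnecessary, as you yourself note, since $F\subseteq E$ with $E$ fixed pointwise already does the job.
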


\begin{proof}
Let $E\subseteq \End^0(A)$ be a field satisfying $n[E:\Q]=2\dim(A)$. Since $F$ is the center of $\End^0(A_\kbar)$, by Remark~\ref{remark: maxfield} applied to $A_{\kbar}$, we have that $F$ is contained in $E$, and hence defined over $k$. It follows that $F\subseteq H$.
\end{proof}

\begin{proposition}
Let $A/k$ be an abelian variety genuinely of $\GL_n$-type, so that $A_\kbar\sim B^r$ for some simple $B/\kbar$. Then $B$ is a $\GL_n$-type $k$-building block.
\end{proposition}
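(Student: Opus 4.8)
The plan is to verify the two conditions in Definition~\ref{def: bb} for $B$. Condition i) is the statement that $\End^0(B)$ is a central division algebra over a field $F$ with Schur index $t_B$ satisfying $n t_B[F\colon\Q] = 2\dim(B)$. Since $A$ is genuinely of $\GL_n$-type, $A_\kbar$ falls in case ii) of Proposition~\ref{proposition: deck}, so $A_\kbar \sim B^r$ with $\End^0(B)$ a division algebra with center $F$; I would reuse the computation there, noting that $2\dim(A_\kbar) = 2\dim(A)$ and applying Remark~\ref{remark: maxfield} to $A_\kbar$ to get $2\dim(A) = n[E\colon\Q] = n t_{A_\kbar}[F\colon\Q]$, where $t_{A_\kbar} = t_B$ is the Schur index of $\End^0(B)$ over $F$. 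Since $\End^0(A_\kbar) \simeq \M_r(\End^0(B))$ and $E$ is maximal in $\End^0(A_\kbar)$, we have $[E\colon F] = r t_B$, hence $2\dim(B) = 2\dim(A)/r = n[E\colon\Q]/r = n t_B [F\colon\Q]$, which is exactly condition i). (That $t_B \mid n$ was already observed in the remark following Definition~\ref{def: bb}, and also falls out of Proposition~\ref{proposition: deck}.)

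For condition ii) I need to produce, for every $s\in G_k$, an isogeny $\mu_s\colon {}^s B \to B$ such that conjugation by $\mu_s$ induces the Galois action ${}^s(-)$ on $\End^0(B)$. The key point is that $B$, being the building block of $A$, is defined over $\kbar$ as an isogeny factor of $A_\kbar$, and $A$ itself is defined over $k$; so $G_k$ acts on the isogeny class of $A_\kbar$, and since $A_\kbar \sim B^r$ with $B$ simple, $G_k$ must permute — up to isogeny — the simple factors, all of which are isogenous to $B$. Concretely, fix an isogeny $A_\kbar \to B^r$; for $s\in G_k$, applying $s$ and composing gives an isogeny $B^r \sim {}^s(A_\kbar) = (A_\kbar)_{{}^s} \sim {}^s(B^r) = ({}^sB)^r$, so ${}^sB$ is isogenous to $B$, and any such isogeny $\mu_s\colon {}^sB \to B$ will do for the first half of ii). For the compatibility with the $G_k$-action on $\End^0(B)$: for $\varphi\in \End^0(B)$, both ${}^s\varphi$ and $\mu_s \varphi \mu_s^{-1}$ are endomorphisms of $B$ in $\End^0(B)$; since $\End^0(B)$ is a central simple $F$-algebra and $\varphi \mapsto \mu_s\varphi\mu_s^{-1}$ and $\varphi\mapsto {}^s\varphi$ are two $F$-algebra automorphisms (the latter because $F\subseteq H$ by Lemma~\ref{lemma: center-center-inclusion}, so $F$ is fixed by $G_k$), one checks they agree. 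Actually the cleanest route: $\mu_s$ is determined up to $\End^0(B)^\times$, and ${}^s(-)$ is an $F$-algebra automorphism of $\End^0(B)$; by Skolem--Noether it is inner, say conjugation by some $u_s\in \End^0(B)^\times$, and then replacing $\mu_s$ by $\mu_s u_s^{-1}$ (absorbing $u_s$, viewed inside $\End^0(B^r)$ suitably) arranges ${}^s\varphi = \mu_s\varphi\mu_s^{-1}$.

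I expect the main obstacle to be the bookkeeping in condition ii): one must be careful that $\mu_s$ is literally an isogeny ${}^sB \to B$ of abelian varieties (not merely an abstract algebra statement), that the twisting by $u_s$ can be performed compatibly, and that one is allowed to adjust $\mu_s$ within its coset without destroying the isogeny property — which is fine since $\End^0(B)^\times$ consists of isogenies. There is no cocycle condition required in the definition of a $k$-building block (only the existence of the $\mu_s$ with the stated conjugation property), which keeps things simple; the only genuinely necessary inputs are that $B$ is simple, that $F\subseteq H$ (so that ${}^s(-)$ fixes $F$ and is thus an $F$-automorphism of the central simple algebra $\End^0(B)$), and Skolem--Noether.
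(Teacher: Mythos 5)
Your plan is sound and uses the same ingredients as the paper (the dimension count for condition i), $F\subseteq H$ from Lemma~\ref{lemma: center-center-inclusion}, and Skolem--Noether), but you apply them at a different level. The paper applies Skolem--Noether to the \emph{canonical} $G_k$-action on $\End^0(A_\kbar)$ --- which makes sense because ${}^sA_\kbar=A_\kbar$ and whose restriction to the center is trivial precisely because that center lies in $\End^0(A)$ --- to conclude that $A_\kbar$ is an abelian $k$-variety, and then passes to $B$ by citing \cite[Proposition 3.5]{Gui10}, which says that the absolutely simple factors of an abelian $k$-variety are abelian $k$-varieties. You instead work directly with $B$, which amounts to reproving the special case of that citation that is needed; this is a legitimate, more self-contained route. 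Note, however, that your first formulation (``one checks they agree'') is not correct as stated: the two maps you compare can only be expected to agree up to an inner automorphism, and your ``cleanest route'' --- adjusting $\mu_s$ by the unit produced by Skolem--Noether --- is indeed the right mechanism, since rescaling by an element of $\End^0(B)^\times$ preserves the property of being an isogeny up to multiplication by an integer.

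The one step you have not actually justified is the hypothesis needed to invoke Skolem--Noether on $\End^0(B)$: that the automorphism $\theta_s(\varphi)=\mu_s\cdot{}^s\varphi\cdot\mu_s^{-1}$ of $\End^0(B)$, for an \emph{arbitrary} initial choice of isogeny $\mu_s\colon{}^sB\to B$, fixes the center $F$ pointwise. Your justification, ``$F\subseteq H$, so $F$ is fixed by $G_k$,'' is a statement about the canonical action on $\End^0(A_\kbar)$, not yet about $\theta_s$ on $\End^0(B)$, and the point is not vacuous: for a general simple $B/\kbar$ with ${}^sB\sim B$ the induced automorphism of $Z(\End^0(B))$ may well be nontrivial (take $B$ the base change to $\kbar$ of a CM elliptic curve over $\Q$; then ${}^sB=B$ and conjugation acts on the CM field by complex conjugation). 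To close the gap, fix an isogeny $\pi\colon A_\kbar\to B^r$ and identify $\End^0(A_\kbar)\simeq \M_r(\End^0(B))$ via $\psi\mapsto\pi\psi\pi^{-1}$; the center of $\End^0(A_\kbar)$ then corresponds to $F$ embedded diagonally, and it is fixed pointwise by ${}^s(-)$ because it lies in $\End^0(A)$ (this is exactly the content of Lemma~\ref{lemma: center-center-inclusion}). Writing out ${}^s\bigl(\pi^{-1}\cdot\on{diag}(\zeta,\dots,\zeta)\cdot\pi\bigr)=\pi^{-1}\cdot\on{diag}(\zeta,\dots,\zeta)\cdot\pi$ for $\zeta\in F$ and conjugating by $\mu_s^{\oplus r}\circ{}^s\pi\circ\pi^{-1}$, which is a unit of $\M_r(\End^0(B))$ and hence acts trivially on central elements, gives $\on{diag}(\theta_s(\zeta),\dots,\theta_s(\zeta))=\on{diag}(\zeta,\dots,\zeta)$, i.e.\ $\theta_s|_F=\mathrm{id}$. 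With this inserted, Skolem--Noether applies and your argument is complete; the paper's citation of \cite[Proposition 3.5]{Gui10} is what hides this bookkeeping in its own proof.
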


\begin{proof}
Since $A$ is an abelian variety genuinely of $\GL_n$-type, $B$ is of $\GL_n$-type, and hence the center $F$ of $\End^0(B)$ satisfies $[F:\Q]=2\dim(B)/(nt_B)$. That $\End^0(B)$ is a central division algebra satisfying property $i)$ of Definition \ref{def: bb} amounts then to $t_B\mid n$. It only remains to see that $B$ is an abelian $k$-variety. We will show first that $A_\kbar$ is an abelian $k$-variety. Let $H$ be the center of $\End^0(A)$. Then $F\subseteq H$ by Lemma \ref{lemma: center-center-inclusion}. Therefore, for any $s\in G_k$, the map
\[
\begin{array}{ccc}
  \End^0(A_\kbar) & \lra & \End^0(A_\kbar)\\
\varphi & \longmapsto & {}^s \varphi
\end{array}
\]
is an automorphism of $F$-algebras. By the Skolem--Noether Theorem there exists an invertible element $\alpha(s)\in \End^0(A_\kbar)$ such that 
\begin{equation}\label{equation: alphacompatibility}
{}^s\varphi = \alpha(s) \cdot\varphi \cdot\alpha(s)^{-1}\qquad \text{for all $\varphi \in \End^0(A_\kbar)$}\,.
\end{equation} 
Thus the isogenies $\{\alpha(s) \colon {}^s A_{\kbar} = A_{\kbar} \ra A_{\kbar} \}_{s\in G_k}$ form a compatible system of isogenies for $A_{\kbar}$. Now we invoke the property that the absolutely simple factors of an abelian $k$-variety are also abelian $k$-varieties (see, e.g.,  \cite[Proposition 3.5]{Gui10}) to deduce that $B$ is an abelian $k$-variety.
\end{proof}

Let $A/k$ be an abelian variety genuinely of $\GL_n$-type, so that $A_\kbar\sim B^r$ for some simple $B/\kbar$. In view of the previous proposition, we call $B$ the building block associated to $A$. We next prove a converse of the previous proposition.

\begin{proposition}
Let $B/\kbar$ be a $\GL_n$-type $k$-building block. There exists an abelian variety genuinely of $\GL_n$-type $A$ defined over $k$ for which $B$ is the associated building block.  
\end{proposition}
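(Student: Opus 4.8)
The plan is to reverse the construction of the building block associated to an abelian variety genuinely of $\GL_n$-type, passing from the abelian $k$-variety $B/\kbar$ to a model over $k$ via restriction of scalars and then extracting an appropriate isogeny factor. Concretely, since $B/\kbar$ has a field of definition, there is a finite Galois extension $L/k$ and an abelian variety $B_0/L$ with $(B_0)_\kbar \cong B$; enlarging $L$ if necessary we may assume that $\End^0((B_0)_L)=\End^0(B)$ and that all the isogenies $\mu_s$ (for $s$ ranging over a set of coset representatives of $G_L$ in $G_k$) are defined over $L$. First I would form the restriction of scalars $C=\Res_{L/k}(B_0)$, which is an abelian variety over $k$ of dimension $[L:k]\dim(B)$ with $C_\kbar \sim \prod_{\sigma\in\Gal(L/k)} {}^\sigma B$. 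Because $B$ is an abelian $k$-variety, all the conjugates ${}^\sigma B$ are $\kbar$-isogenous to $B$, so $C_\kbar\sim B^{[L:k]}$; in particular $C$ is isotypic over $k$, hence $k$-isogenous to $A'^{m}$ for a simple $A'/k$ whose base change to $\kbar$ is isogenous to a power of $B$.

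The next step is to check that this simple factor $A'$ — or a suitable ``twist'' of it — is genuinely of $\GL_n$-type with $B$ as its building block. By Proposition~\ref{prop:maximal of both}, it suffices to produce a maximal subfield $E$ of $\End^0(A')$ that is simultaneously a maximal subfield of $\End^0(A'_\kbar)\simeq \M_{r'}(\End^0(B))$. Since $\End^0(B)$ is a central division algebra over $F$ of Schur index $t_B$ with $t_B\mid n$, it contains a maximal subfield $E_B$ with $[E_B:F]=t_B$, and then any field $E$ with $[E:E_B]=r'$ sitting inside $\M_{r'}(E_B)\subseteq \M_{r'}(\End^0(B))$ is maximal in $\End^0(A'_\kbar)$; the point is to arrange that such an $E$ is already defined over $k$, i.e. lies in $\End^0(A')$. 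This is where the standard theory of abelian $k$-varieties and their cohomology classes enters: attached to the compatible system $\{\mu_s\}$ is a class $c(B)\in H^2(G_k,\kbar^\times)$ (inflated from $H^2(\Gal(L/k),L^\times)$ after the reductions above), and the abelian varieties over $k$ geometrically isogenous to a power of $B$ are classified, up to isogeny, by the twists of $\Res_{L/k}(B_0)$ by cocycles representing $c(B)$; one knows (see \cite{Pyl04}, \cite{Gui10}) that among these twists there is one, call it $A$, whose endomorphism algebra $\End^0(A)$ is a central simple algebra over $H$ (for an appropriate field $H\supseteq F$, namely the fixed field cut out by the projective Galois representation attached to $B$) Brauer-equivalent to $\End^0(B)\otimes_F H$ times the class of $c(B)$, and in particular containing a maximal subfield of the required degree $n t_A[H:\Q]/?$. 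Choosing $A$ to be a simple factor realizing the minimal such algebra, one gets $E$ maximal in both $\End^0(A)$ and $\End^0(A_\kbar)$, and $\dim A/\dim B$ works out so that $A$ is of $\GL_n$-type.

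I would organize the write-up as: (1) descend $B$ to $B_0/L$ with full endomorphisms and isogenies defined over $L$; (2) form $\Res_{L/k}(B_0)$ and identify its geometric isogeny decomposition as $B^{[L:k]}$ using that $B$ is an abelian $k$-variety; (3) invoke the classification of abelian varieties over $k$ geometrically isotypic of type $B$ by cocycles in the relevant $H^2$, exactly as in the $\GL_2$-case of \cite{Pyl04} (which the paper explicitly says it is generalizing), to produce a simple $A/k$ with $\End^0(A)$ of the predicted shape; (4) verify via Proposition~\ref{prop:maximal of both} that $A$ is genuinely of $\GL_n$-type by exhibiting the common maximal subfield, and check the dimension count $2\dim A = n t_A[H:\Q]$ so that the $n$ matches; (5) observe that by construction $A_\kbar\sim B^r$, so $B$ is the building block associated to $A$.

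The main obstacle I anticipate is step~(3)–(4): ensuring that the cohomological twist can be performed so that the resulting $\End^0(A)$ actually contains a field that is maximal in $\End^0(A_\kbar)=\M_r(\End^0(B))$ and not merely maximal in the (possibly smaller) central simple algebra $\End^0(A)$ — equivalently, that the Brauer class obstructing the descent of a maximal commutative subalgebra from the geometric endomorphism algebra to $k$ vanishes for a suitable choice of twist. Here one uses that the obstruction lives in a Brauer group over a number field and that, by the local–global principle together with the freedom to adjust the twist by characters of $\Gal(K/k)$, one can always kill it; this is precisely the mechanism the paper attributes to \cite{Pyl04} in the $n=2$ case, and I expect the bookkeeping — rather than any genuinely new idea — to be the technical heart of the argument.
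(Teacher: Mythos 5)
Your overall strategy---descend $B$ to a model over a finite extension, take the restriction of scalars, and then twist using the cohomology class attached to the abelian $k$-variety---is exactly the Ribet--Pyle--Guitart circle of ideas that underlies the result the paper actually uses. The paper's own proof is much shorter: it quotes \cite[Theorem 2.5]{Gui12}, which directly produces an abelian variety $A/k$ with $A_\kbar\sim B^r$ and with $E=\End^0(A)$ a maximal subfield of $\End^0(A_\kbar)$, and then finishes with the degree count $[E\colon\Q]=rt_B[F\colon\Q]=2\dim(A)/n$, so that $A$ is genuinely of $\GL_n$-type (cf.\ Proposition \ref{prop:maximal of both}).

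The difficulty is that you neither prove nor correctly cite the one statement on which everything hinges: the existence of a twist $A$ such that $\End^0(A)$ contains a field that is maximal in $\End^0(A_\kbar)\simeq\M_r(\End^0(B))$. Your steps (3)--(4) assert this with ``one knows (see \cite{Pyl04}, \cite{Gui10})'' and an explicitly unresolved degree (``$nt_A[H:\Q]/?$''), and your final paragraph substitutes a guess for the proof: that the obstruction is a Brauer class over a number field which ``one can always kill'' by a local--global argument together with twisting by characters of $\Gal(K/k)$. That is not an argument, and it is not how the existence is established in the sources you invoke: the proofs of Ribet, Pyle and Guitart rest on the vanishing of $H^2(G_k,\Qbar^\times)$ (Tate), which lets one split the class $c(B)$ by a continuous map $\beta\colon G_k\to\Qbar^\times$, define $E=F(\{\beta(s)\})$, and construct the twist of the restriction of scalars so that $E$ embeds into $\End^0(A)$ with the correct degree; moreover the field $H$ is not determined by $B$ alone (it depends on the choice of splitting map), contrary to your parenthetical description. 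Since \cite{Pyl04} treats only the $\GL_2$, $k=\Q$ case, citing it does not cover the general statement either. So as written the argument has a genuine gap at its decisive step; it can be closed either by carrying out the splitting-map construction and the degree bookkeeping in general, or simply by quoting \cite[Theorem 2.5]{Gui12}, which is what the paper does.
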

\begin{proof}
  By \cite[Theorem 2.5]{Gui12} there exists an abelian variety $A/k$ such that $A_\kbar\sim B^r$ for some $r$ and such that $E = \End^0(A)$ is a maximal subfield of $\End^0(A_\kbar)$. In particular, $A$ is simple and has no $\kbar$-isogeny factor of $\GL_m$-type for $m<n$. Hence, it suffices to show that $A$ is of $\GL_n$-type. Let $F$ denote the center of $\End^0(A_\kbar)$. Since $E$ is a maximal subfield of $\End^0(A_\kbar)\simeq \M_r(\End^0(B))$, we have that $[E\colon F]= rt_B$, so that 
  \begin{align*}
    [E\colon\Q] = rt_B[F\colon\Q] = r\frac{2\dim B}{n} = \frac{2\dim(A)}{n}
  \end{align*}
showing that $A$ is of $\GL_n$-type.
\end{proof}

The Brauer classes of the endomorphism algebra of a variety genuinely of $\GL_n$-type and that of its building block are related.
\begin{proposition}\label{prop:genuinely GLn Brauer classes}
	Let $A$ be an abelian variety genuinely of $\GL_n$-type and $B$ its associated building block. Let $H$ be the center of $\End^0(A)$, and $F$ be the center of $\End^0(B)$. Then
	\[
		[\End^0(B)\otimes_F H] = [\End^0(A)]
	\]
	in $\Br(H)$. In particular, if $t_A$ and $t_B$ are the  Schur indices of $\End^0(A)$ and $\End^0(B)$, then $t_A\mid t_B$.
\end{proposition}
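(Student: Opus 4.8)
The plan is to realize $\End^0(A)$ inside $\End^0(A_\kbar)$ as the centralizer of $H$, and then apply the standard description of the Brauer class of such a centralizer. Fix a subfield $E\subseteq \End^0(A)$ with $n[E:\Q]=2\dim A$; since $A$ is genuinely of $\GL_n$-type, $E$ is a maximal subfield of both $\End^0(A)$ and $\End^0(A_\kbar)$, and $F\subseteq H\subseteq E$ (the inclusion $F\subseteq H$ being Lemma~\ref{lemma: center-center-inclusion}, and $H\subseteq E$ being part of Remark~\ref{remark: maxfield}). Write $\End^0(A_\kbar)\simeq \M_r(\End^0(B))$, a central simple $F$-algebra, and set $C:=C_{\End^0(A_\kbar)}(H)$. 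Since $H$ is a field containing $F$, the double centralizer theorem shows that $C$ is a central simple $H$-algebra; and since $H$ is the center of $\End^0(A)$, every element of $\End^0(A)$ commutes with $H$, so $\End^0(A)\subseteq C$.

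The crux is to upgrade this inclusion to an equality $\End^0(A)=C$. A maximal subfield of a central simple algebra equals its own centralizer, so $C_{\End^0(A_\kbar)}(E)=E$. As $H\subseteq E\subseteq C$, this gives $C_C(E)=C\cap C_{\End^0(A_\kbar)}(E)=C\cap E=E$, so $E$ is also a maximal subfield of $C$; the double centralizer theorem applied inside $C$ then yields $\dim_H C=[E:H]^2$. By Remark~\ref{remark: maxfield} we have $[E:H]=t_A$, hence $\dim_H C=t_A^2=\dim_H\End^0(A)$, and the inclusion $\End^0(A)\subseteq C$ must be an equality.

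It remains to identify $[C]$. Here one invokes the standard fact that for a central simple $F$-algebra $\mathcal X$ and a subfield $L$ with $F\subseteq L\subseteq \mathcal X$, one has $\mathcal X\otimes_F L\simeq \M_{[L:F]}(C_{\mathcal X}(L))$, so that $[C_{\mathcal X}(L)]=[\mathcal X\otimes_F L]$ in $\Br(L)$. Taking $\mathcal X=\End^0(A_\kbar)$ and $L=H$ gives
\[
	[\End^0(A)]=[C]=[\End^0(A_\kbar)\otimes_F H]=[\M_r(\End^0(B))\otimes_F H]=[\End^0(B)\otimes_F H]
\]
in $\Br(H)$, which is the asserted identity. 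Finally, $t_A\mid t_B$ follows formally: $\End^0(A)$ is a division algebra, so $t_A$ is the index of its Brauer class, which by the above equals the index of $[\End^0(B)\otimes_F H]$; and the index of a central simple algebra divides its degree, which here is $t_B$ because $\dim_H(\End^0(B)\otimes_F H)=\dim_F\End^0(B)=t_B^2$. The only step needing care is the identification $\End^0(A)=C_{\End^0(A_\kbar)}(H)$ of the second paragraph, whose essential input is exactly that $E$ remains a maximal subfield after base change to $\kbar$ — which is precisely what the hypothesis of being genuinely of $\GL_n$-type provides; everything else is formal manipulation with central simple algebras.
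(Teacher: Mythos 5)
Your proof is correct, but it follows a genuinely different route from the paper's. The paper deduces the statement from an abstract algebra lemma (Proposition~\ref{prop:inclusion of algebras}): for an inclusion $\cA\subseteq\cB$ of algebras sharing a maximal subfield $E$, one shows $[\cB\otimes_F H]=[\cA]$ by invoking a theorem of Yu, which gives that $\dim_F\cA$ divides the capacity of $\cX=(\cB\otimes_F H)\otimes_H\cA^{\mathrm{op}}$, and then forcing $t_\cX=1$ by a dimension count; this is applied to $\cA=\End^0(A)$, $\cB=\End^0(A_\kbar)$. You instead identify $\End^0(A)$ as the full centralizer $C_{\End^0(A_\kbar)}(H)$ — the inclusion $\End^0(A)\subseteq C$ being automatic, and equality following from the dimension count $\dim_H C=[E:H]^2=t_A^2$, which uses exactly the hypothesis that $E$ stays maximal in $\End^0(A_\kbar)$ — and then conclude by the classical fact $[C_{\mathcal X}(H)]=[\mathcal X\otimes_F H]$ in $\Br(H)$. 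Your argument uses only standard double-centralizer theory (no appeal to Yu's embedding theorem) and yields the slightly stronger structural statement that $\End^0(A)$ \emph{is} the centralizer of its center inside $\End^0(A_\kbar)$, while the paper's lemma is phrased for abstract inclusions of algebras and so is reusable in that generality. One small point worth tightening: the blanket claim that a maximal subfield of a central simple algebra is self-centralizing needs the base field to admit extensions of the relevant degrees (it fails, e.g., over $\C$); here it is harmless, since all fields involved are number fields, or, more directly, since maximality of $E$ in $\End^0(A_\kbar)$ already gives $[E:F]=rt_B=\deg\End^0(A_\kbar)$, from which $C_{\End^0(A_\kbar)}(E)=E$ follows by the centralizer theorem's dimension formula. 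The final step, $t_A\mid t_B$ because the index of $[\End^0(B)\otimes_F H]$ divides its degree $t_B$, is exactly the intended deduction.
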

\begin{proof}
  This follows from Proposition \ref{prop:inclusion of algebras} below and Proposition \ref{prop:maximal of both}, applied to $\cA = \End^0(A)$ and $\cB = \End^0(A_\kbar)$.
\end{proof}

\begin{proposition}\label{prop:inclusion of algebras}
  Let $\cB$ be an $F$-central simple algebra, and let $\cA\subseteq \cB$ be a division $F$-subalgebra of $\cB$ with center $H$. Suppose that there exists a maximal subfield $E$ of $\cA$ which is also a maximal subfield of $\cB$. Then $    [\cB\otimes_F H] = [\cA]$ in $\mathrm{Br}(H) $. 
\end{proposition}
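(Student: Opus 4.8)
The plan is to compute the class $[\cB \otimes_F H]$ in $\Br(H)$ by using the subfield $E$ as a common splitting field and relating everything back to $\cA$. Since $E$ is a maximal subfield of the central simple $F$-algebra $\cB$, the field $E$ splits $\cB$, i.e. $[\cB]$ lies in the kernel of $\Br(F) \to \Br(E)$; tensoring up with $H$ (note $H \subseteq E$ since $H = Z(\cA)$ and $E$ is a maximal subfield of $\cA$, hence contains its center), we get that $E$ splits $\cB \otimes_F H$ as well. So $[\cB \otimes_F H]$ lies in the relative Brauer group $\Br(E/H)$, and $[E:H]$ is a multiple of the index of $\cB \otimes_F H$.

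Next I would bring in $\cA$. Since $E$ is a maximal subfield of the division algebra $\cA$ with center $H$, we have $[E:H] = t_\cA$, the Schur index of $\cA$, and moreover $E$ is a splitting field for $\cA$ with $[E:H]$ exactly equal to the index. The key point is to show $[\cB \otimes_F H] = [\cA]$ directly, not merely that they have the same splitting behavior. For this I would use the double centralizer / Skolem–Noether circle of ideas: inside $\cB$, consider the centralizer $C_\cB(E)$ of the maximal subfield $E$ of $\cB$; since $E$ is maximal it is its own centralizer in $\cB$, so $C_\cB(E) = E$. Now pass to $\cB \otimes_F H$. One has $\cB \otimes_F H \cong \cB \otimes_F Z(\cA)$, and there is a natural algebra map $\cA \otimes_F \cB^{\mathrm{op}} \to \End_F(\cB)$ (or rather I would work with $\cA \hookrightarrow \cB$ and the centralizer $C_\cB(\cA)$). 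By the double centralizer theorem applied to the simple subalgebra $\cA \subseteq \cB$ (here one needs $\cB$ central simple over $F$ and $\cA$ simple with center $H \supseteq F$), we get
\[
\cB \otimes_F H \;\cong\; \cA \otimes_H C_\cB(\cA)
\]
as $H$-algebras, where $C_\cB(\cA)$ is the centralizer of $\cA$ in $\cB$. Therefore in $\Br(H)$,
\[
[\cB \otimes_F H] = [\cA] + [C_\cB(\cA)].
\]
So it remains to prove $[C_\cB(\cA)] = 0$, i.e. that $C_\cB(\cA)$ is a matrix algebra over $H$.

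To finish, I would use the hypothesis that $E$ is a maximal subfield of $\cB$ together with $E \subseteq \cA$. Since $E$ is a (commutative) subalgebra of $\cA$, we have $C_\cB(\cA) \subseteq C_\cB(E) = E$ (the last equality because $E$ is maximal in $\cB$, hence self-centralizing). But $C_\cB(\cA)$ also contains $H = Z(\cA)$, and being a subalgebra of the field $E$ it is itself a field, with $H \subseteq C_\cB(\cA) \subseteq E$. A dimension count via the double centralizer theorem, $\dim_F \cA \cdot \dim_F C_\cB(\cA) = \dim_F \cB \cdot [H:F]$, pins down $\dim_F C_\cB(\cA) = \dim_F \cB \cdot [H:F] / \dim_F \cA$; plugging in $\dim_F \cA = t_\cA^2 [H:F]$ and $\dim_F \cB = [E:F]^2 = (t_\cA [H:F])^2$ gives $\dim_F C_\cB(\cA) = t_\cA^2 [H:F] = \dim_F \cA$... so in fact this forces $C_\cB(\cA)$ to have $H$-dimension $t_\cA^2$, hence it is a central simple $H$-algebra of the same dimension as $\cA$; combined with $C_\cB(\cA) \subseteq E$ being commutative one must be slightly more careful, so the cleanest route is: since $C_\cB(\cA) \subseteq E$ is a field and $E$ is already a maximal subfield (= splitting field of minimal degree) of both $\cA$ and $\cB \otimes_F H$, the class $[C_\cB(\cA)]$ is split by $E$; but $C_\cB(\cA)$ contains $E$ in fact — here I should instead observe $E \subseteq C_\cB(E) \subseteq C_\cB(\cA)$ is wrong in that direction. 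The correct statement is $C_\cB(\cA) \subseteq C_\cB(E)=E$, and separately $E = C_\cB(E) \subseteq \cA \otimes_H C_\cB(\cA)$ forces, by comparing with $C_{\cA}(E)=E$, that $C_\cB(\cA) = E \cap (\text{something})$; ultimately $[C_\cB(\cA)]=0$ since a central simple $H$-algebra that embeds in a field is trivial.

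The main obstacle I anticipate is exactly this last bookkeeping step: carefully identifying $C_\cB(\cA)$ and checking it is a split $H$-algebra, making sure the double centralizer theorem is applied in the correct relative setting ($\cB$ central simple over $F$, $\cA$ simple with center $H$ over $F$, so that $\cB \otimes_F H \cong \cA \otimes_H C_\cB(\cA)$). Once that structural identity is in hand, $[C_\cB(\cA)] = 0$ because $C_\cB(\cA)$ is a simple $H$-algebra contained in the field $E$, hence commutative, hence equal to its own center, hence of trivial Brauer class; and then $[\cB \otimes_F H] = [\cA]$ in $\Br(H)$ follows immediately. The consequence $t_A \mid t_B$ of Proposition~\ref{prop:genuinely GLn Brauer classes} then drops out since the index of $\End^0(B) \otimes_F H$ divides that of $\End^0(B)$.
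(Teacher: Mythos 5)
Your argument is correct in substance, but it follows a genuinely different route from the paper's. The paper's proof invokes an embedding theorem of Yu \cite[Thm. 2.7]{Yu12}: the inclusion $\cA\subseteq\cB$ forces $\dim_F\cA$ to divide the capacity of $\cX=(\cB\otimes_F H)\otimes_H\cA^{\mathrm{op}}$, and a bookkeeping computation with $[E\colon H]=t_\cA$ and $[E\colon F]=rt_\cB$ then forces $t_\cX=1$, i.e. $[\cX]=0$. You instead argue through the classical centralizer theorem: $C_{\cB}(\cA)$ is simple with center $H$ and $[\cB\otimes_F H]=[\cA]+[C_{\cB}(\cA)]$ in $\Br(H)$; since $E\subseteq\cA$ gives $C_{\cB}(\cA)\subseteq C_{\cB}(E)=E$ (maximality of $E$ in $\cB$, in the sense $[E\colon F]=\deg_F\cB$ used throughout the paper, makes $E$ self-centralizing), the centralizer $C_{\cB}(\cA)$ is commutative, hence equals its center $H$, and the proposition follows. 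This is arguably more self-contained than the paper's proof --- it needs only standard double-centralizer theory rather than Yu's capacity result --- and it identifies the ``complementary factor'' $C_{\cB}(\cA)$ concretely, whereas the paper's argument is a pure index computation; the price is that you must be careful about which form of the centralizer theorem you quote.

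Two local slips should be repaired, though neither affects your final argument. First, the asserted isomorphism $\cB\otimes_F H\simeq\cA\otimes_H C_{\cB}(\cA)$ cannot hold as stated, since the two sides differ in dimension by a factor $[H\colon F]^2$; the correct statement is $C_{\cB}(H)\simeq\cA\otimes_H C_{\cB}(\cA)$ together with $[C_{\cB}(H)]=[\cB\otimes_F H]$ in $\Br(H)$, which yields exactly the Brauer identity $[\cB\otimes_F H]=[\cA]+[C_{\cB}(\cA)]$ that you use. Second, the dimension formula should read $\dim_F\cA\cdot\dim_F C_{\cB}(\cA)=\dim_F\cB$, with no factor $[H\colon F]$; with this correction the count gives $\dim_F C_{\cB}(\cA)=[H\colon F]$, consistent with $C_{\cB}(\cA)=H$, and the inconclusive middle portion of your write-up can simply be deleted in favor of the final argument via $C_{\cB}(\cA)\subseteq C_{\cB}(E)=E$.
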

 \begin{proof}
   The inclusion of $F$-algebras $\cA\subseteq \cB$ implies (cf. \cite[Theorem~2.7]{Yu12}) that $\dim_F\cA$ divides the capacity of the algebra $\cX := (\cB\otimes_F H)\otimes_H \cA^{\mathrm{op}}$, where $\cA^{\mathrm{op}}$ stands for the opposite algebra of $\cA$. If we write $\cB\simeq \M_r(\mathcal{D})$ with $\mathcal{D}$ a division algebra, then $\dim_F\cB=r^2 t_\cB^2$. Therefore,
   \begin{align*}
     c(\cX)^2 = \frac{\dim_H\cX}{t_\cX^2} = \frac{\dim_F\cB \dim_H\cA}{t_\cX^2}=\frac{r^2t_\cB^2t_\cA^2}{t_\cX^2},
   \end{align*}
where in the last equality we used that $\cA$ is a division central $H$-algebra, so that $\dim_H\cA=t_\cA^2$. Taking square roots we obtain that
  \begin{align}\label{eq:capacity}
    c(\cX) = \frac{rt_\cB t_\cA}{t_\cX}.
  \end{align}
  On the other hand, since $E$ is a maximal subfield of $\cA$ we have that $[E\colon H]=t_\cA$. Therefore,
  \begin{align}\label{eq:AF}
    \begin{split}
      \dim_F\cA &= [H\colon F] \dim_H\cA =[H\colon F]t_\cA^2  \\ &=t_\cA [E\colon H][H\colon F]= t_\cA [E\colon F]=t_\cA t_\cB r,
          \end{split}
  \end{align}
  where in the last equality we used that $E$ is a maximal subfield of $\cB$, which implies that $[E\colon F]=rt_\cB $. Now the condition $\dim_F\cA \mid c(\cX)$, together with \eqref{eq:capacity} and \eqref{eq:AF}, implies that $t_\cX=1$. This means that $0 = [\cX]=[\cB\otimes_F H]- [\cA]$, and the proposition follows. 
  
 \end{proof}

We introduce a terminology that will be used in subsequent sections. Recall that a simple abelian variety is called of the first kind if the center of its endomorphism algebra is totally real; that is, if its endomorphism algebra has Albert type I, II, or III in the classification of division algebras with a positive involution. 

\begin{definition}
  We say that an abelian variety genuinely of $\GL_n$-type is \emph{geometrically of the first kind} if the associated building block is of the first kind.
\end{definition}

\begin{remark}\label{remark:n even}
	If $A$ is geometrically of the first kind, then $n$ is even. This can be checked on the building block $B$. If $B$ has Albert type II or III, then this follows from part (ii) of Proposition~\ref{proposition: deck}, since $t_B=2$ divides $n$. If $B$ has Albert type I, then by \cite[\S21, pg.202]{Mum74} we have $[F:\QQ]\mid \dim B$, and so $n=2\frac{\dim B}{[F:\QQ]}$ is indeed even.
\end{remark}

\begin{proposition}\label{prop:first kind Schur 1 or 2}
Let $A/k$ be an abelian variety genuinely of $\GL_n$-type with associated building block $B/\kbar$. If $A$ is of the first kind, then so is $B$. More precisely:
\begin{enumerate}[i)]
	\item If $A$ has Albert type $\I$, then $B$ has Albert type $\I$ or $\II$.
	\item If $A$ has Albert type $\II$, then $B$ has Albert type $\II$.
	\item If $A$ has Albert type $\III$, then $B$ has Albert type $\III$.
\end{enumerate}
\end{proposition}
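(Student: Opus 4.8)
The plan is to read off the Albert type of $B$ from the Brauer-class identity $[\End^0(B)\otimes_F H]=[\End^0(A)]$ in $\Br(H)$ of Proposition~\ref{prop:genuinely GLn Brauer classes}, by comparing invariants at the real places. First, $B$ is automatically of the first kind: since $A$ is, its center $H$ is totally real, and $F\subseteq H$ by Lemma~\ref{lemma: center-center-inclusion}, so $F$ is totally real; a totally real field is not CM, so Albert's classification excludes type $\IV$ for $\End^0(B)$. Next, a simple abelian variety of the first kind has Schur index $1$ in type $\I$ and $2$ in types $\II$ and $\III$; in particular $t_A,t_B\in\{1,2\}$, and since $t_A\mid t_B$ by Proposition~\ref{prop:genuinely GLn Brauer classes}, whenever $A$ has type $\II$ or $\III$ we get $t_B=2$, so $B$ cannot have type $\I$.

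It remains to decide whether $B$ has type $\III$, and this is where the one real computation enters. I would fix a real place $w$ of $H$---there is one because $H$ is totally real---and let $v$ be the place of $F$ lying below $w$; as the completion $H_w=\R$ contains $F$, the place $v$ is real and $F_v=\R$. Restricting the identity of Proposition~\ref{prop:genuinely GLn Brauer classes} from $\Br(H)$ to $\Br(H_w)=\Br(\R)$ and using the canonical isomorphism $(\End^0(B)\otimes_F H)\otimes_H H_w\cong\End^0(B)\otimes_{F,v}\R$ (which holds because $[H_w:F_v]=1$), one obtains the equality of local invariants $\operatorname{inv}_v(\End^0(B))=\operatorname{inv}_w(\End^0(A))$ in $\frac12\Z/\Z$. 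For a first-kind variety the endomorphism algebra base-changed to $\R$ along a real place of its center is $\R$ in type $\I$ and $\M_2(\R)$ in type $\II$ (invariant $0$ in both), and $\Hg$ in type $\III$ (invariant $\frac12$); hence the displayed equality says exactly that $A$ has type $\III$ if and only if $B$ has type $\III$.

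Combining the two ingredients yields the three cases. If $A$ has type $\I$, then $A$ is not of type $\III$, hence neither is $B$, so $B$ has type $\I$ or $\II$. If $A$ has type $\II$, then $B$ has type $\II$ or $\III$ by the Schur-index bookkeeping and is not of type $\III$ by the invariant computation, so $B$ has type $\II$. If $A$ has type $\III$, then $B$ has type $\III$. The only step demanding care is the local-invariant computation---specifically, checking that base change of a central simple algebra along the trivial extension $\R\hookrightarrow\R$ preserves the Brauer invariant and that the isomorphism $(\End^0(B)\otimes_F H)\otimes_H H_w\cong\End^0(B)\otimes_{F,v}\R$ is the correct identification---but this is entirely routine.
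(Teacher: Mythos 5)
Your proof is correct and takes essentially the same route as the paper: both deduce that $B$ is of the first kind from $F\subseteq H$ and then settle the type-$\III$ alternative by comparing the Brauer invariants at the real places of $H$ in the identity $[\End^0(B)\otimes_F H]=[\End^0(A)]$ of Proposition~\ref{prop:genuinely GLn Brauer classes}. The only cosmetic difference is that in case ii) you rule out type $\I$ for $B$ via the divisibility $t_A\mid t_B$, whereas the paper notes that $B$ of type $\I$ would force $[\End^0(A)]$ to be trivial; both are immediate consequences of the same proposition.
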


\begin{proof}
 If $A$ is of the first kind, then $H$ is a totally real field, and from $F\subseteq H$ we conclude that $B$ is also of the first kind.
 
 We claim that if $B$ is of type III, then  $[\End^0(A)]$ has nontrivial Brauer invariants at the infinite places. Indeed, since $H$ is totally real, for any place $w\colon H\hookrightarrow \R$, we have 
\[ (B \otimes_F H)\otimes_{H,w}\R \simeq B\otimes_{F,v}\R,\]
where $v=w_{|F}$. Since $[\End^0(B)]$ has nontrivial Brauer invariants at the infinite places,  $B\otimes_{F,v}\R$ is a division algebra, which means that the Brauer invariants of $[\End^0(B)\otimes_F H]$ are nontrivial at the infinite places. By Proposition \ref{prop:genuinely GLn Brauer classes}, the same happens for $[\End^0(A)]$.

Assume that $A$ is of type I. Then $[\End^0(A)]$ is trivial, and hence $B$ is not of type III by the above claim.

Assume that $A$ is of type II. If $B$ were of type I, then $[\End^0(A)]$ would be trivial by Proposition \ref{prop:genuinely GLn Brauer classes}, which is a contradiction. Since $[\End^0(A)]$ has trivial Brauer invariants at the infinite places,  by the claim $B$ is not of type III either.

Assume that $A$ is of type III. Suppose that $B$ is of type I or II. Then $[\End^0(A)]=[\End^0(B)\otimes_F H]$ has nontrivial Brauer invariants at the infinite places, which is a contradiction.
\end{proof}

\begin{remark}
  In the case $i)$ of Proposition \ref{prop:first kind Schur 1 or 2}, there are examples of varieties in which $A$ has Albert type I and $B$ has Albert type II. For example, let $k=\Q(\sqrt{21})$ and let $A/k$ be the Jacobian of the curve
  \begin{align*}
    y^2 = \left(x^2 + \frac{7}{2}\right)\left(\frac{83}{30} x^4 + 14x^3 - \frac{1519}{30}x^2 + 49x - \frac{1813}{120}\right).
  \end{align*}
  By \cite{DR05} we have that $\End^0(A)\simeq \Q(\sqrt{3})$ and $\End^0(A_\kbar)$ is the quaternion algebra over $\Q$ of discriminant 6. Thus we see that $A$ is genuinely of $\GL_2$-type and of type I, and the building block of $A$ is of type II. It would be interesting to find similar examples for varieties genuinely of $\GL_n$-type for $n>2$.
\end{remark}

\begin{remark}
	When $A$ has Albert type IV, $B$ can have any Albert type. For an example genuinely of $\GL_4$-type, let $A/\QQ$ be the Jacobian of the genus 4 curve
	\[
		C: y^2 = x(x^4-1)(x^4+x^2+1).
	\]
	In \cite[Theorem~4.1.1]{CFLV23}, it is proved that $\End^0(A_{\Qbar})$ is the quaternion algebra over $\QQ$ of discriminant 2. Moreover, it is shown that $\QQ(i)$ is the minimal field where all endomorphisms of $A$ are defined.
	This implies $B=A_\Qbar$ has Albert type III. However, over $\QQ$ we have $\End^0(A)\simeq\QQ(\sqrt{-1})$, which has Albert type IV. Indeed, $C$ has the automorphism $(x,y)\mapsto \left(\frac{-1}{x},\frac{-y}{x^5}\right)$. To see that $A$ has no more endomorphisms over $\QQ$, note that $\QQ(\sqrt{-1})$ is maximal in $\End^0(A_{\Qbar})$.
\end{remark}

In Section~\ref{section:family}, we give a family of abelian fourfolds genuinely of $\GL_4$-type that can be specialized to find a variety $A$ with Albert type IV such that $B$ has Albert type I.

We give a weak converse to Proposition \ref{prop:first kind Schur 1 or 2}.

\begin{corollary}\label{corollary:schur index first kind}
Let $A/k$ be an abelian variety genuinely of $\GL_n$-type which is geometrically of the first kind. Then $\End^0(A)$ is either a field or a quaternion algebra.
\end{corollary}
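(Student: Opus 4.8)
The plan is to read off a bound on the Schur index $t_A$ of $\End^0(A)$ from the divisibility $t_A \mid t_B$ furnished by Proposition~\ref{prop:genuinely GLn Brauer classes}, where $B$ is the building block associated to $A$.

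By definition, $A$ being geometrically of the first kind means precisely that $B$ is of the first kind, i.e.\ that $\End^0(B)$ has Albert type $\I$, $\II$, or $\III$. First I would recall from Albert's classification of division algebras admitting a positive involution (\cite[\S21]{Mum74}) that such an algebra is a totally real number field in type $\I$, and a quaternion algebra over a totally real field in types $\II$ and $\III$. Hence its Schur index $t_B$ equals $1$ in type $\I$ and $2$ in types $\II$ and $\III$; in all cases $t_B \le 2$.

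Next, Proposition~\ref{prop:genuinely GLn Brauer classes} gives $t_A \mid t_B$, so that $t_A \in \{1, 2\}$. Finally, since $A$ is simple, $\End^0(A)$ is a division algebra with center $H$ and Schur index $t_A$, so $\dim_H \End^0(A) = t_A^2$: if $t_A = 1$ then $\End^0(A) = H$ is a number field, and if $t_A = 2$ then $\End^0(A)$ is a quaternion algebra over $H$.

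There is no genuine obstacle here: the corollary follows immediately from Proposition~\ref{prop:genuinely GLn Brauer classes} together with the list of Schur indices that occur for the first three Albert types. The one point worth stating carefully is the definition of \emph{geometrically of the first kind}, since it is exactly this that lets us transfer the Albert-type restriction to $B$ without needing the more refined Proposition~\ref{prop:first kind Schur 1 or 2}.
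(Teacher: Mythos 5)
Your argument is correct and is essentially the paper's own proof: both deduce $t_B\in\{1,2\}$ from the Albert classification of types $\I$, $\II$, $\III$, and then apply the divisibility $t_A\mid t_B$ from Proposition~\ref{prop:genuinely GLn Brauer classes} to conclude that $\End^0(A)$ is a field or a quaternion algebra over its center. The extra remarks you add (spelling out Albert's list and the meaning of Schur index $1$ or $2$ for a division algebra) are harmless elaborations of the same route.
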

\begin{proof}
Let $B/\bar k$ be the building block associated to $A$. By hypothesis, $\End^0(B)$ is a division algebra of Albert type $\I$, $\II$ or $\III$, and so its Schur index is either 1 or 2. It follows from Proposition~\ref{prop:genuinely GLn Brauer classes} that the Schur index of $\End^0(A)$ divides 2.
\end{proof}

\section{Inner twists}\label{section: innertwists}

The notion of inner twist plays a key role in the theory of $\GL_2$-type abelian varieties. Following the circle of ideas, and many of the arguments, introduced in \cite{Rib80}, we adapt this notion to the case of $\GL_n$. 

Let $A/k$ be an abelian variety genuinely of $\GL_n$-type. We keep the notations of the previous sections
\begin{equation}
  F = Z(\End^0(A_\kbar)) \subseteq H = Z(\End^0(A)) \subseteq E,
\end{equation}
where $E$ is a maximal subfield of both $\End^0(A)$ and of $\End^0(A_\kbar)$, and the first inclusion has been proved in Section \ref{section: building blocks}.

Let $K/k$ denote the minimal extension over which $\End^0(A_K)=\End^0(A_\kbar)$. For $s\in \Gal(K/k)$, let $\alpha(s)\in \End^0(A_K)$ be an invertible element satisfying \eqref{equation: alphacompatibility}.
Observe that $\alpha(s)$ is not uniquely determined by \eqref{equation: alphacompatibility}, as any element of the form $\alpha(s)x$ with $x\in F^\times$ satisfies the same property. We fix from now on a  choice of $\alpha(s)$ for each $s \in \Gal(K/k)$.

Observe that $\alpha(s)$ belongs to $H^\times$. Indeed, if $\varphi \in \End^0(A)$, then $\varphi = {}^s \varphi = \alpha(s)\varphi \alpha(s)^{-1}$ and therefore $\alpha(s)\in H^\times$. In fact, we have the following proposition.

\begin{proposition}
  The field $F(\{\alpha(s)\}_{s\in \Gal(K/k)})$ is equal to $H$.
\end{proposition}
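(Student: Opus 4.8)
The plan is to recognize $\cA:=\End^0(A)$ as a centralizer inside $\cB:=\End^0(A_\kbar)$ and then apply the double centralizer theorem. Write $L=F(\{\alpha(s)\}_{s\in \Gal(K/k)})$. Since each $\alpha(s)$ lies in $H^\times$ (as observed just above) and $H$ is a field, $L$ is a subfield of $H$ containing $F$; in particular $L$ is a commutative, hence simple, $F$-subalgebra of $\cB$, and the inclusion $L\subseteq H$ is clear. It remains to prove $H\subseteq L$.

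First I would show that $\cA=\mathrm{Cent}_\cB(L)$, the centralizer of $L$ in $\cB$. By the minimality of $K$ we have $\End^0(A_K)=\cB$, and Galois descent of endomorphisms gives $\cA=\cB^{\Gal(K/k)}$, the set of $\varphi\in \cB$ with ${}^s\varphi=\varphi$ for every $s\in \Gal(K/k)$. By \eqref{equation: alphacompatibility}, for such $\varphi$ we have ${}^s\varphi=\alpha(s)\varphi\alpha(s)^{-1}$, so the condition ${}^s\varphi=\varphi$ is equivalent to $\varphi$ commuting with $\alpha(s)$. Intersecting over $s$, and observing that commuting with every $\alpha(s)$ is the same as commuting with the $F$-subalgebra $F[\{\alpha(s)\}]=L$ they generate (here we use that $F=Z(\cB)$, so the ambiguity of $\alpha(s)$ by $F^\times$ plays no role), we obtain $\cA=\bigcap_{s}\mathrm{Cent}_\cB(\alpha(s))=\mathrm{Cent}_\cB(L)$.

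Now $\cB$ is a central simple $F$-algebra (it is simple because $A_\kbar$ is isotypic, with center $F$ by definition), and $L$ is a simple subalgebra. The double centralizer theorem then yields $\mathrm{Cent}_\cB(\mathrm{Cent}_\cB(L))=L$, whence
\[
Z(\cA)=Z\bigl(\mathrm{Cent}_\cB(L)\bigr)=\mathrm{Cent}_\cB(L)\cap \mathrm{Cent}_\cB\bigl(\mathrm{Cent}_\cB(L)\bigr)=\mathrm{Cent}_\cB(L)\cap L=L,
\]
using in the last step that $L$ is commutative, so $L\subseteq \mathrm{Cent}_\cB(L)$. On the other hand $Z(\cA)=Z(\End^0(A))=H$ by definition of $H$. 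Therefore $H=L$, as desired.

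The argument is short, and the only step requiring care is the identification $\cA=\mathrm{Cent}_\cB(L)$: one must recognize that the compatibility datum $\{\alpha(s)\}$ turns the (twisted) Galois-stability condition defining $\End^0(A)$ inside $\End^0(A_K)=\End^0(A_\kbar)$ into a plain centralizer condition, the ambiguity of $\alpha(s)$ modulo $F^\times$ being harmless because $F$ is central. Granting this, the conclusion is immediate from the double centralizer theorem applied to the subfield $L$ of the central simple $F$-algebra $\End^0(A_\kbar)$.
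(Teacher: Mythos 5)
Your proof is correct and follows essentially the same route as the paper: both identify $\End^0(A)$ as the centralizer of $L=F(\{\alpha(s)\})$ inside the central simple $F$-algebra $\End^0(A_K)$ and then invoke the Double Centralizer Theorem. You merely spell out (via Galois descent and \eqref{equation: alphacompatibility}) the identification $\End^0(A)=C_{\End^0(A_K)}(L)$ that the paper states without proof, and you extract the conclusion by computing $Z(\End^0(A))$ rather than chasing the inclusion $H\subseteq C(C(L))=L$, which is only a cosmetic difference.
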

\begin{proof}
Put $D=\End^0(A)$, $\mathcal{X} = \End^0(A_K)$, and $H' = F(\{\alpha(s)\}_{s\in \Gal(K/k)})$.  It is clear that $H\subseteq C_{\mathcal{X}}(D)$. Since $D = C_{\mathcal{X}}(H')$, this implies that $H\subseteq C_{\mathcal{X}}(C_{\mathcal{X}}(H'))$. The inclusion  $H\subseteq H'$ now follows from the Double Centralizer Theorem, which asserts that $C_{\mathcal{X}}(C_{\mathcal{X}}(H'))=H'$.
\end{proof}

\begin{definition}\label{definition: inner twist}
For $\gamma\in \Aut(H/F)$, the \emph{inner twist} of $A$ relative to $\gamma$ is the map
$$
\chi_\gamma: \Gal(K/k) \rightarrow H^ \times\,, \qquad \chi_\gamma(s)=\acc\gamma\alpha(s)/\alpha(s)\,.
$$
\end{definition}

\begin{lemma}
For $\delta, \gamma\in \Aut(H/F)$, we have the cocycle identity $\chi_{\gamma\delta} = \chi_\gamma\cdot {}^{\gamma}\chi_\delta$. Moreover, $\chi_\gamma$ is a character.
\end{lemma}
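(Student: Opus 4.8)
The plan is to first isolate the one structural fact about the family $\{\alpha(s)\}$ that makes everything work, namely that $\alpha$ is multiplicative up to a factor in $F^\times$, and then to deduce both assertions by essentially formal manipulations inside the commutative field $H$.

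\emph{Step 1: a twisted multiplicativity for $\alpha$.} I claim that for all $s,t\in\Gal(K/k)$ there is an element $c(s,t)\in F^\times$ with $\alpha(st)=c(s,t)\,\alpha(s)\alpha(t)$. To see this, note that both $\alpha(st)$ and $\alpha(s)\alpha(t)$ implement the automorphism $\varphi\mapsto{}^{st}\varphi$ of $\mathcal{X}:=\End^0(A_K)=\End^0(A_\kbar)$ by conjugation: for $\alpha(st)$ this is \eqref{equation: alphacompatibility}, and for $\alpha(s)\alpha(t)$ one uses that $\alpha(t)\in H\subseteq\End^0(A)$ is $G_k$-invariant, so ${}^s\alpha(t)=\alpha(t)$, whence $\alpha(s)\alpha(t)\,\varphi\,(\alpha(s)\alpha(t))^{-1}=\alpha(s)\,{}^t\varphi\,\alpha(s)^{-1}={}^{st}\varphi$. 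Consequently $\alpha(st)\,(\alpha(s)\alpha(t))^{-1}$ centralizes all of $\mathcal{X}$, hence lies in $Z(\mathcal{X})^\times=F^\times$, proving the claim.

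\emph{Step 2: $\chi_\gamma$ is a character.} Fix $\gamma\in\Aut(H/F)$ and $s,t\in\Gal(K/k)$. Using Step 1 in both the numerator and the denominator of $\chi_\gamma(st)={}^\gamma\alpha(st)/\alpha(st)$, and using that $\gamma$ fixes $F$ so that ${}^\gamma c(s,t)=c(s,t)$, we get
\[
\chi_\gamma(st)=\frac{c(s,t)\,{}^\gamma\alpha(s)\,{}^\gamma\alpha(t)}{c(s,t)\,\alpha(s)\,\alpha(t)}=\frac{{}^\gamma\alpha(s)}{\alpha(s)}\cdot\frac{{}^\gamma\alpha(t)}{\alpha(t)}=\chi_\gamma(s)\,\chi_\gamma(t),
\]
where the middle equality is legitimate because $H$ is commutative. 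Thus $\chi_\gamma\colon\Gal(K/k)\to H^\times$ is a group homomorphism; since $\Gal(K/k)$ is finite its image is a finite subgroup of $H^\times$, i.e.\ $\chi_\gamma$ takes values in the roots of unity of $H$, and is in particular a finite-order character.

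\emph{Step 3: the cocycle identity.} This is formal and does not even require Step 1. For $\delta,\gamma\in\Aut(H/F)$ and $s\in\Gal(K/k)$,
\[
\chi_\gamma(s)\cdot{}^{\gamma}\chi_\delta(s)=\frac{{}^\gamma\alpha(s)}{\alpha(s)}\cdot{}^\gamma\!\left(\frac{{}^\delta\alpha(s)}{\alpha(s)}\right)=\frac{{}^\gamma\alpha(s)}{\alpha(s)}\cdot\frac{{}^{\gamma\delta}\alpha(s)}{{}^\gamma\alpha(s)}=\frac{{}^{\gamma\delta}\alpha(s)}{\alpha(s)}=\chi_{\gamma\delta}(s),
\]
so $\chi_{\gamma\delta}=\chi_\gamma\cdot{}^\gamma\chi_\delta$ as claimed.

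The only genuinely delicate point is Step 1, and specifically the bookkeeping of the $F^\times$-indeterminacy in the choice of $\alpha(s)$: the argument succeeds precisely because $\gamma\in\Aut(H/F)$ is the identity on $F$, so the nuisance factor $c(s,t)$ is invisible to $\chi_\gamma$. Everything else is routine manipulation in the commutative group $H^\times$.
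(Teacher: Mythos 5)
Your proof is correct and takes essentially the same route as the paper: the heart of both arguments is that $\alpha(s)\alpha(t)\alpha(st)^{-1}$ centralizes $\End^0(A_K)$ and hence lies in $F^\times$, so it is fixed by $\gamma\in\Aut(H/F)$ and cancels when one divides, giving multiplicativity of $\chi_\gamma$, while the cocycle identity is the same formal telescoping computation. (Your invocation of ${}^s\alpha(t)=\alpha(t)$ in Step 1 is true but superfluous, since applying \eqref{equation: alphacompatibility} to ${}^t\varphi$ already gives $\alpha(s)\,{}^t\varphi\,\alpha(s)^{-1}={}^{st}\varphi$.)
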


\begin{proof}
The first part of the lemma is obvious. For $s,t \in \Gal(K/k)$ define
  \begin{align}\label{eq:c of sigma tau}
    \xi(s,t) = \alpha(s)\cdot \alpha(t)\cdot \alpha(st)^{-1}.
  \end{align}
Using \eqref{equation: alphacompatibility} one sees that $\xi(s,t)$ commutes with all $\varphi\in \End^0(A_K)$ and therefore  $\xi(s,t)\in F^\times$.  Hence
  \begin{align*}
    \xi(s,t) = {}^{\gamma}\alpha(s)\cdot {}^{\gamma}\alpha(t)\cdot {}^{\gamma}\alpha(st)^{-1},
  \end{align*}
  and dividing by \eqref{eq:c of sigma tau} the second part of the lemma follows. 
\end{proof}

We now restrict to the case where $A$ is geometrically of the first kind, that is, the case in which $F$ is totally real. Since $H$ is the center of $D$, it is either a totally real field or a CM field, and hence complex conjugation uniquely defines an element $c\in  \Aut(H/F)$.

\begin{definition}
If $A$ is geometrically of the first kind, the \emph{nebentype} of $A$ is $\varepsilon=\chi_c^{-1}$.
\end{definition}

Fix a polarization for $A$ and denote by $'$ the Rosati involution that it induces on $\End^0(A)$ and on $\End^0(A_\kbar)$.  It is well-known that the action of the Rosati involution on $H$ coincides with that of $c$. 

\begin{proposition}\label{prop:H/F abelian}
  If $F$ is totally real, the field extension $H/F$ is abelian.
\end{proposition}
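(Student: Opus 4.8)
The plan is to use the Rosati involution to show that each $\alpha(s)^2$ lies in $\mu(H)\cdot F^\times$, where $\mu(H)$ denotes the group of roots of unity of $H$, and then to deduce abelianness by a Kummer-type argument, using the fact proved above that $H=F(\{\alpha(s)\}_{s\in\Gal(K/k)})$.

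First I would establish the key relation. Since the polarization fixed above is defined over $k$, the Rosati involution $'$ is $\Gal(K/k)$-equivariant: ${}^s(\varphi')=({}^s\varphi)'$ for all $\varphi\in\End^0(A_\kbar)$. Applying the defining identity ${}^s\varphi=\alpha(s)\varphi\alpha(s)^{-1}$ to $\varphi'$ and comparing it with $({}^s\varphi)'=(\alpha(s)')^{-1}\varphi'\alpha(s)'$, one finds that $\alpha(s)'\alpha(s)$ commutes with all of $\End^0(A_\kbar)$, hence lies in $F^\times$. As $'$ restricts to complex conjugation $c$ on $H$ and $\alpha(s)\in H^\times$, this says $f(s):=\alpha(s)^c\alpha(s)\in F^\times$, whence $\alpha(s)^2=\varepsilon(s)f(s)$ with $\varepsilon(s):=\alpha(s)/\alpha(s)^c=\chi_c(s)^{-1}$. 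Since $\varepsilon$ is a character of the finite group $\Gal(K/k)$, it has finite order, so $\varepsilon(s)\in\mu(H)$; note also that $-1\in\mu(H)$, so $\mu(H)$ has even order.

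Then I would argue as follows. Let $\sigma\colon H\hookrightarrow\bar F$ be any embedding over $F$. It restricts to an automorphism of $\mu(H)$, a power map $\zeta\mapsto\zeta^{j(\sigma)}$ with $j(\sigma)$ coprime to $|\mu(H)|$, hence odd. From $\alpha(s)^2=\varepsilon(s)f(s)$ we get $(\sigma(\alpha(s))/\alpha(s))^2=\varepsilon(s)^{j(\sigma)-1}$, which, $j(\sigma)-1$ being even, equals $(\varepsilon(s)^{(j(\sigma)-1)/2})^2$; hence $\sigma(\alpha(s))=\pm\varepsilon(s)^{(j(\sigma)-1)/2}\alpha(s)\in H$. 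As $H=F(\{\alpha(s)\})$, this forces $\sigma(H)\subseteq H$, so $H/F$ is normal, hence Galois. For commutativity, put $w_s(\sigma):=\sigma(\alpha(s))/\alpha(s)\in\mu(H)$ for $\sigma\in\Gal(H/F)$; this is a cocycle with $w_s(\sigma)^2=\varepsilon(s)^{j(\sigma)-1}$. A direct computation with the cocycle relation gives $w_s([\sigma,\tau])=w_s(\tau)^{j(\sigma)-1}w_s(\sigma)^{1-j(\tau)}$, and using that $j(\sigma)-1$ and $j(\tau)-1$ are even this rewrites as $(w_s(\tau)^2)^{(j(\sigma)-1)/2}(w_s(\sigma)^2)^{-(j(\tau)-1)/2}=\varepsilon(s)^{(j(\tau)-1)(j(\sigma)-1)/2}\varepsilon(s)^{-(j(\sigma)-1)(j(\tau)-1)/2}=1$. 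Hence $[\sigma,\tau]$ fixes every $\alpha(s)$, so it is trivial on $H$, and $\Gal(H/F)$ is abelian.

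I expect the crux to be the first step, extracting $\alpha(s)^2\in\mu(H)\cdot F^\times$ from the Rosati involution, together with the parity observation underlying the second. The first step alone only exhibits $H$ as a tower of abelian extensions $H\supseteq F(\mu(H),\{\alpha(s)^2\})\supseteq F$, which in general need not be abelian; it is precisely the presence of $-1\in\mu(H)$ — forcing $j(\sigma)$ odd, so that the square roots $\varepsilon(s)^{(j(\sigma)-1)/2}$ lie in $H$ and the commutator collapses — that makes the argument go through. This is where the hypothesis that $F$ be totally real enters essentially: it ensures $c\in\Aut(H/F)$ with $c|_F=\mathrm{id}$ (used to obtain $f(s)\in F^\times$ and $\varepsilon(s)^c=\varepsilon(s)^{-1}$) and that $'$ restricts to $c$ on $H$.
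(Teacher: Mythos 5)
Your proof is correct, and its first step---using the $k$-rationality of the polarization to make the Rosati involution Galois-equivariant, whence $\alpha(s)\overline{\alpha(s)}\in F^\times$ and $\alpha(s)^2=\varepsilon(s)\,\alpha(s)\overline{\alpha(s)}$ with $\varepsilon(s)$ a root of unity---is exactly the paper's argument. Where you diverge is the endgame. The paper concludes in one line: since $\alpha(s)\overline{\alpha(s)}\in F$, each $\alpha(s)$ lies in $F\bigl(\zeta_{2m},\sqrt{\alpha(s)\overline{\alpha(s)}}\bigr)$, so $H=F(\{\alpha(s)\})$ is contained in a compositum of a cyclotomic extension of $F$ and quadratic extensions obtained by adjoining square roots of elements of $F$; a compositum of abelian extensions of $F$ is abelian, and a subextension of an abelian extension is automatically Galois and abelian. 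In particular, your closing worry---that without the parity trick one only gets a tower $H\supseteq F(\mu(H),\{\alpha(s)^2\})\supseteq F$ of abelian steps, which need not be abelian over $F$---does not apply to this route: the decisive point is that the elements $\alpha(s)\overline{\alpha(s)}$ whose square roots are adjoined lie in $F$ itself, not merely in $H$, so no Kummer-theoretic bookkeeping is required. Your explicit computation (oddness of $j(\sigma)$ because $-1\in\mu(H)$, forcing $\sigma(\alpha(s))\in H$, plus the commutator identity for $w_s$) is a valid self-contained substitute; it buys a hands-on proof of normality and an explicit description of how $\Gal(H/F)$ acts on the generators $\alpha(s)$, at the cost of being considerably longer than the paper's two-line conclusion.
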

\begin{proof}
For ease of notation, let us denote $c$ by $\overline \cdot$. We may then rewrite $\alpha(s)^2$ as
  \begin{align*}
    \alpha(s)^2 = \alpha(s)\cdot \overline{\alpha(s)}\cdot \varepsilon(s).
  \end{align*}
 We have that $\alpha(s)\cdot \overline{\alpha(s)} $ belongs to $F$. Indeed, applying the Rosati involution on \eqref{equation: alphacompatibility}, we see that
  \begin{align*}
    ({}^s \varphi)' = \overline{\alpha(s)}^{\, -1}\cdot \varphi'\cdot \overline{\alpha(s)}  
  \end{align*}
  for all $\varphi\in \End^0(A_K)$. Since the polarization of $A$ is defined over $k$, we have that $({}^s \varphi)' = {}^s (\varphi')$, and replacing $\varphi'$ by $\varphi$ we obtain that
  \begin{align*}
    {}^s \varphi = \overline{\alpha(s)}^{-1}\cdot \varphi\cdot \overline{\alpha(s)}.  
  \end{align*}
  This equality together with \eqref{equation: alphacompatibility} implies that $\alpha(s)\cdot \overline{\alpha(s)}$ commutes with every $\varphi\in \End^0(A_K)$ and therefore belongs to $F$.
  Therefore $H = F(\{\alpha(s)\}_{s \in \Gal(K/k)})$ is contained in the abelian extension of $F$ obtained by adjoining to $F$ the square roots of the elements $\alpha(s)\cdot \overline{\alpha(s)} $ and $\zeta_{2m}$, where $m$ is the order of $\varepsilon$.
\end{proof}

\section{An algebraic interlude}

Throughout this section let $G$ be a group, $\Eg/\Hg$ a finite Galois field extension, and $V$ a vector space over $\Eg$ of finite dimension $n$. Let
$$
\varrho:G\rightarrow \GL(V)\simeq \GL_n(\Eg)
$$
be a semisimple representation satisfying that $\End_{\Eg[G]}(V)=\Eg$ and that the characteristic polynomial of $\varrho(s)$ has coefficients in $\Hg$ for all $s\in G$. If $\Hg$ has characteristic~0, then the later condition amounts to the requirement that $\Tr(\varrho(s))\in \Hg$ for all $s\in G$.

In this section we give a cohomological characterization for the realizability of $\varrho$ over $\Hg$, that is, the existence of a representation with coefficients in $\Hg$ equivalent to $\varrho$. This approach goes back to \cite{Sch04} (cf. \cite{Nek12}), and our exposition closely follows that of \cite[\S6,\S8]{Rib92}, where the same formalism is applied to the problem of descent of the field of definition of abelian varieties.

Let $\Delta$ denote the Galois group $\Gal(\Eg/\Hg)$. For $g\in \Delta$, let ${}^g\varrho$ denote the representation sending $s\in G$ to the matrix obtained by conjugating by $g$ the entries of $\varrho(s)$. We write ${}^gV$ to denote the vector space $\Eg^n$ endowed with the action of ${}^g\varrho$.

By \cite[Ch. 8,\S21, n.1, Prop. 1]{Bou12}, there exists $A_g\in \GL_n(\Eg)$ such that
the diagram
\begin{equation}\label{equation: compatibilityAg}
\xymatrix{
{}^g V \ar[d]_{{}^g\varrho(s)} \ar[r]^{A_g} & V \ar[d]^{\varrho(s)}\\
{}^g V \ar[r]^{A_g} & V}
\end{equation}
 commutes for every $s\in G$.
\begin{lemma}
For $g,h\in \Delta$, the product 
\begin{equation}\label{equation: def c_V}
c_V(g,h):=A_g\cdot {}^{g}A_h\cdot  A_{gh}^{-1} \in \GL_n(\Eg)
\end{equation} 
is a scalar matrix. The corresponding map
$$
c_V: \Delta \times \Delta \rightarrow \Eg^\times 
$$
is a 2-cocycle, once $\Eg$ is endowed with the natural action by $\Delta$.
\end{lemma}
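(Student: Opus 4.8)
The plan is to first unwind what the intertwining matrices $A_g$ do, then invoke the hypothesis $\End_{\Eg[G]}(V)=\Eg$ to force scalarity, and finally extract the cocycle identity from an associativity computation with the $A_g$'s.

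Concretely, the commutativity of \eqref{equation: compatibilityAg} for all $s\in G$ says that $\varrho(s)\cdot A_g = A_g\cdot {}^g\varrho(s)$ for all $g\in\Delta$ and $s\in G$. Since $m\mapsto {}^gm$ is a ring automorphism of $\M_n(\Eg)$ and ${}^g({}^h\varrho)={}^{gh}\varrho$, applying ${}^g$ to the intertwining relation for $A_h$ yields ${}^g\varrho(s)\cdot {}^gA_h = {}^gA_h\cdot {}^{gh}\varrho(s)$. Composing these two identities, one checks that $A_g\cdot {}^gA_h$ intertwines ${}^{gh}\varrho$ with $\varrho$, exactly as $A_{gh}$ does; hence $c_V(g,h)=A_g\cdot {}^gA_h\cdot A_{gh}^{-1}$ commutes with $\varrho(s)$ for every $s\in G$, so it belongs to $\End_{\Eg[G]}(V)=\Eg$. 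Being a product of invertible matrices it lies in $\Eg^\times$; in particular it is a scalar matrix, which is the first assertion.

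For the cocycle identity I would rewrite \eqref{equation: def c_V} as $A_g\cdot {}^gA_h = c_V(g,h)\cdot A_{gh}$ and evaluate the triple product $A_g\cdot {}^gA_h\cdot {}^{gh}A_k$ in two ways. Grouping it as $(A_g\cdot {}^gA_h)\cdot {}^{gh}A_k$ gives $c_V(g,h)\, c_V(gh,k)\, A_{ghk}$. Grouping it instead as $A_g\cdot {}^g\!\bigl(A_h\cdot {}^hA_k\bigr)=A_g\cdot {}^g\!\bigl(c_V(h,k)\,A_{hk}\bigr)$, and using that ${}^gc_V(h,k)$ is a scalar and can therefore be pulled to the front of the matrix product, gives ${}^gc_V(h,k)\, c_V(g,hk)\, A_{ghk}$. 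Equating the two expressions and cancelling $A_{ghk}$ produces
\[
{}^gc_V(h,k)\cdot c_V(g,hk) \;=\; c_V(g,h)\cdot c_V(gh,k),
\]
which is precisely the $2$-cocycle relation for a function $\Delta\times\Delta\to\Eg^\times$, with $\Eg^\times$ carrying the natural $\Delta$-action.

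The whole argument is formal; the only points that need a little care are the compatibility ${}^g({}^h\varrho)={}^{gh}\varrho$ of the twisting (which is what places $A_g\cdot {}^gA_h$ among the intertwiners of ${}^{gh}\varrho$, so that $c_V$ makes sense) and the fact that scalar matrices move freely past the $A_g$'s, so that the two groupings of the triple product differ only by the expected scalars. I do not expect any genuine obstacle here.
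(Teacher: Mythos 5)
Your proposal is correct and follows essentially the same route as the paper: scalarity comes from showing $c_V(g,h)$ commutes with every $\varrho(s)$ and invoking $\End_{\Eg[G]}(V)=\Eg$, and the cocycle identity is the standard factor-set computation (your two groupings of $A_g\cdot{}^gA_h\cdot{}^{gh}A_k$ are just a more explicit version of the "straightforward calculation" the paper alludes to, using that the scalars ${}^gc_V(h,k)$ are central). No gaps.
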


\begin{proof}
Given the hypothesis $\End_G(V)=\Eg$, to show the first part of the lemma it suffices to show that $c_V(g,h)$ commutes with $\varrho(s)$ for every $s\in G$. This follows from Diagram \eqref{equation: compatibilityAg}. Checking that $c$ is a $2$-cocycle amounts to verifying the relation
$$
{}^fc_V(h,g)\cdot c_V(f,h)^{-1}=c_V(f,hg)^{-1}\cdot c_V(fh,g).
$$
for all $f,g,h\in \Delta$. Since $\End_G(V)=\Eg$, we may replace ${}^fc_V(h,g)$ by $A_f{}^fc_V(h,g)A_f^{-1}$ in the above relation. Its verification becomes then a straightforward calculation. 
\end{proof}

We will denote by $\Eg^{c_V}[\Delta]$ the crossed product algebra associated to the extension $\Eg/\Hg$ and the $2$-cocycle $c_V$. This is an $\Hg$-algebra with underlying vector space the $\Eg$-linear span of the set of symbols $[g]$, for $g\in\Delta$, subject to the multiplication laws $[g]\cdot a={}^ga\cdot [g]$ and $[g][h]=c_V(g,h)[gh]$ for all $g,h\in \Delta$ and $a\in \Eg$. We let $\Res_{\Eg/\Hg}(V)$ denote $V$ regarded as an $\Hg[G]$-module. 
 
\begin{lemma}\label{lemma: endsofres}
There is an algebra isomorphism
$$
\End_{\Hg[G]}(\Res_{\Eg/\Hg}V)\simeq \Eg^{c_V}[\Delta].
$$
\end{lemma}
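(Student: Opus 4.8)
The plan is to construct the isomorphism explicitly by letting $\Eg^{c_V}[\Delta]$ act on $\Res_{\Eg/\Hg}V$ and checking that this action realizes it as the full endomorphism ring. First I would define, for each $g\in\Delta$, the $\Hg$-linear endomorphism $\Phi_g\colon V\to V$ sending $v\mapsto A_g\cdot g(v)$, where $A_g\in\GL_n(\Eg)$ is the intertwiner from Diagram~\eqref{equation: compatibilityAg} and $g(v)$ means applying $g$ entrywise to the coordinate vector of $v$. Each $\Phi_g$ is $\Hg$-linear but only $g$-semilinear over $\Eg$; the commutativity of~\eqref{equation: compatibilityAg} is exactly the statement that $\Phi_g$ commutes with $\varrho(s)$ for all $s\in G$, so $\Phi_g\in\End_{\Hg[G]}(\Res_{\Eg/\Hg}V)$. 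I would also note that $\Eg$ itself acts by (left) multiplication and these scalar multiplications also lie in $\End_{\Hg[G]}(\Res_{\Eg/\Hg}V)$.

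Next I would verify the multiplicative relations. Composing, $\Phi_g\circ\Phi_h$ sends $v\mapsto A_g\cdot g(A_h\cdot h(v)) = A_g\cdot{}^gA_h\cdot (gh)(v) = c_V(g,h)\,A_{gh}\cdot(gh)(v) = c_V(g,h)\,\Phi_{gh}(v)$, using the definition~\eqref{equation: def c_V} of $c_V$. Likewise $\Phi_g\circ(\lambda\cdot) = {}^g\lambda\cdot\Phi_g$ for $\lambda\in\Eg$, since $\Phi_g$ is $g$-semilinear. These are precisely the defining relations of the twisted group algebra $\Eg^{c_V}[\Delta]$, so $[g]\mapsto\Phi_g$ together with the embedding of $\Eg$ extends to a well-defined $\Hg$-algebra homomorphism $\Eg^{c_V}[\Delta]\to\End_{\Hg[G]}(\Res_{\Eg/\Hg}V)$.

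Finally I would check this map is bijective by a dimension count. The source has $\Hg$-dimension $|\Delta|\cdot[\Eg:\Hg] = [\Eg:\Hg]^2 = d^2$, where $d=[\Eg:\Hg]$. For the target, since $\End_{\Eg[G]}(V)=\Eg$ and $\Eg/\Hg$ is separable, $\Res_{\Eg/\Hg}V$ is a semisimple $\Hg[G]$-module whose endomorphism algebra is computed by the standard restriction-of-scalars formula: $\Res_{\Eg/\Hg}V$ becomes, after base change to $\Eg$, isomorphic to $\bigoplus_{g\in\Delta}{}^gV$, and since the ${}^gV$ are pairwise non-isomorphic simple $\Eg[G]$-modules (this follows from absolute irreducibility of $\varrho$ together with $\End_{\Eg[G]}V=\Eg$) one gets $\dim_\Hg\End_{\Hg[G]}(\Res_{\Eg/\Hg}V) = d^2$. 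Hence the homomorphism is a surjection between $\Hg$-vector spaces of equal finite dimension, and it is injective because $\Eg^{c_V}[\Delta]$ is a (central) simple $\Hg$-algebra — or alternatively because the $\Phi_g$ are visibly $\Eg$-linearly independent as semilinear maps with distinct semilinearity types. Either way we conclude the map is an isomorphism.

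The main obstacle I anticipate is the dimension computation for $\End_{\Hg[G]}(\Res_{\Eg/\Hg}V)$: one must be careful that the ${}^gV$ for distinct $g\in\Delta$ are genuinely pairwise non-isomorphic as $\Eg[G]$-modules, which uses $\End_{\Eg[G]}(V)=\Eg$ in an essential way (if $V$ were not absolutely irreducible the formula could fail), and one must invoke separability of $\Eg/\Hg$ so that $\Res_{\Eg/\Hg}V\otimes_\Hg\Eg\cong\bigoplus_{g}{}^gV$ with no nilpotents. With that in hand, injectivity is automatic from simplicity of the twisted group algebra, so the argument closes cleanly.
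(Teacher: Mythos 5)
Your construction of the homomorphism is fine and matches the paper's in substance: your $\Phi_g$ is exactly the element the paper calls $\lambda_g$ (obtained there via the adjunction $\End_{\Hg[G]}(\Res_{\Eg/\Hg}V)\simeq \prod_{g\in \Delta}\Hom_{\Eg[G]}({}^gV,V)$), and the relation $\Phi_g\circ\Phi_h=c_V(g,h)\,\Phi_{gh}$ is the relation $\lambda_g\lambda_h=c_V(g,h)\lambda_{gh}$. The problem is in your surjectivity step. You assert that the conjugates ${}^gV$, $g\in\Delta$, are pairwise non-isomorphic simple $\Eg[G]$-modules. This is false in the present setting — indeed it is the opposite of the whole point of the construction: the commutativity of \eqref{equation: compatibilityAg} says precisely that $A_g\colon {}^gV\to V$ is an isomorphism of $\Eg[G]$-modules, so \emph{all} the conjugates are isomorphic to $V$ (this is what the $\Hg$-rationality of the characteristic polynomials, together with semisimplicity, buys via the cited Bourbaki result; absolute irreducibility does not give non-isomorphy of Galois conjugates). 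Moreover, your count is internally inconsistent: if the ${}^gV$ were pairwise non-isomorphic with $\End_{\Eg[G]}({}^gV)=\Eg$, then $\End_{\Eg[G]}\bigl(\bigoplus_g {}^gV\bigr)\simeq \Eg^{\,d}$ would have $\Eg$-dimension $d$, giving $\dim_\Hg\End_{\Hg[G]}(\Res_{\Eg/\Hg}V)=d$, not the $d^2$ you need to match $\dim_\Hg \Eg^{c_V}[\Delta]=d^2$.

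The repair is straightforward. Since ${}^gV\simeq V$ for every $g$ via $A_g$, the base change $\Res_{\Eg/\Hg}V\otimes_\Hg\Eg\simeq\bigoplus_{g\in\Delta}{}^gV\simeq V^{\oplus d}$ has endomorphism algebra $\M_d(\End_{\Eg[G]}(V))=\M_d(\Eg)$, of $\Eg$-dimension $d^2$, whence $\dim_\Hg\End_{\Hg[G]}(\Res_{\Eg/\Hg}V)=d^2$ and your surjectivity-by-dimension argument (with injectivity from the $\Eg$-linear independence of semilinear maps of distinct types, or from the simplicity of the crossed product) goes through. Alternatively, and more economically, argue as the paper does: the adjunction identifies $\End_{\Hg[G]}(\Res_{\Eg/\Hg}V)$ with $\prod_{g\in\Delta}\Hom_{\Eg[G]}({}^gV,V)$, each factor being one-dimensional over $\Eg$ and spanned by $A_g$ (here $\End_{\Eg[G]}(V)=\Eg$ is used), so the $\Phi_g$ already form an $\Eg$-basis of the target and no separate dimension count or injectivity argument is needed; only the relation $\Phi_g\Phi_h=c_V(g,h)\Phi_{gh}$ from \eqref{equation: def c_V} remains to be checked, which you did.
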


\begin{proof}
By the universal property of the restriction of scalars, we have
$$
\mathcal R:=\End_{\Hg[G]}(\Res_{\Eg/\Hg}V)\simeq \prod_{g\in \Delta} \Hom_{\Eg[G]}({}^gV, V)
$$
We note that $\Hom_{\Eg[G]}({}^hV,V)$ is a $1$-dimensional $\Eg$-vector space generated by $A_h$. Let $\lambda_h$ be the element in $\mathcal R$ corresponding to $A_h$. The set of the $\lambda_h$, for $h\in \Delta$, is a basis of $\mathcal R$ as an $\Eg$ vector space. Each $\lambda_h$ acts on
\begin{equation}\label{equation: decomp restriction}
\Res_{\Eg/\Hg}(V)\otimes \Eg\simeq \prod_{g\in \Delta} {}^gV
\end{equation}
by sending ${}^{gh}V$ to ${}^gV$ via ${}^gA_h$. 
Then, the relation $\lambda_g\lambda_h = c_V(g,h)\lambda_{gh}$ is an immediate consequence of \eqref{equation: def c_V}. On the one hand, multiplication by $a$ on $\Res_{\Eg/\Hg}(V)\otimes \Eg$ becomes multiplication by ${}^h a$ on ${}^hV$ under \eqref{equation: decomp restriction}. Then the relation $\lambda_h\cdot a={}^ha\cdot \lambda_h$ follows from the fact that $\lambda_h\cdot a$ sends ${}^{h}V$ to $V$ via $A_h\cdot {}^h a$, the fact that ${}^h a\cdot \lambda_h$ sends ${}^{h}V$ to $V$ via ${}^h a\cdot A_h$, and the equality $A_h\cdot {}^h a ={}^h a\cdot A_h$
\end{proof}

We will let $\gamma_V$ denote the class of $c_V$ in $H^2(\Delta,\Eg^\times)$. By the previous lemma $\gamma_V$ corresponds to the class of $\End_{\Hg[G]}(\Res_{\Eg/\Hg}V)$ in $\Br(\Eg/\Hg)$ under the canonical isomorphism $\Br(\Eg/\Hg)\simeq H^2(\Delta,\Eg^\times)$. This class does not depend on the choice of the $A_g$.

\begin{proposition}\label{proposition: trivcocreal}
The class $\gamma_V$ is trivial if and only if $\varrho$ is realizable over $\Hg$.
\end{proposition}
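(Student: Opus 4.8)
The plan is to prove both directions of the equivalence between triviality of $\gamma_V$ and realizability of $\varrho$ over $\Hg$, using the algebra isomorphism $\End_{\Hg[G]}(\Res_{\Eg/\Hg}V)\simeq \Eg^{c_V}[\Delta]$ established in Lemma \ref{lemma: endsofres} together with a descent argument.

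First I would treat the easy direction: suppose $\varrho$ is realizable over $\Hg$, so there is a representation $\varrho_0\colon G\to \GL_m(\Hg)$ with $\varrho_0\otimes_\Hg \Eg\simeq \varrho$ (noting $\dim_\Hg V_0 = n[\Eg:\Hg] = mn$ forces $m=n$... actually $\dim_\Hg$ of the realization equals $\dim_\Eg V = n$, so $\varrho_0$ has degree $n$). Then $\Res_{\Eg/\Hg}V \simeq \Eg\otimes_\Hg V_0$ as $\Hg[G]$-modules, where $V_0$ is the $\Hg$-model. Hence $\End_{\Hg[G]}(\Res_{\Eg/\Hg}V) \simeq \End_{\Hg[G]}(\Eg\otimes_\Hg V_0)\simeq \Eg\otimes_\Hg \End_{\Hg[G]}(V_0)$. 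Since $\End_{\Eg[G]}(V)=\Eg$ and $V = \Eg\otimes_\Hg V_0$, one checks $\End_{\Hg[G]}(V_0)=\Hg$ (an $\Hg[G]$-endomorphism of $V_0$ extends $\Eg$-linearly to an $\Eg[G]$-endomorphism of $V$, hence is scalar in $\Eg$, hence in $\Hg$ acting on $V_0$). Therefore $\End_{\Hg[G]}(\Res_{\Eg/\Hg}V)\simeq \Eg$, a commutative algebra of $\Hg$-dimension $[\Eg:\Hg]$. Comparing with $\Eg^{c_V}[\Delta]$, which is a central simple $\Hg$-algebra of dimension $[\Eg:\Hg]^2$ whose class in $\Br(\Eg/\Hg)$ is $\gamma_V$: since $\Eg^{c_V}[\Delta]$ is isomorphic to a commutative algebra precisely when it is isomorphic to $\M_{[\Eg:\Hg]}(\Hg)$... wait, that is not commutative either. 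The correct comparison: $\Eg^{c_V}[\Delta]$ always has $\Hg$-dimension $[\Eg:\Hg]^2$, but $\End_{\Hg[G]}(\Res_{\Eg/\Hg}V)\simeq \Eg$ has dimension only $[\Eg:\Hg]$. This forces $V$ to be reducible as an $\Hg[G]$-module; more precisely, $\Res_{\Eg/\Hg}V$ is then $[\Eg:\Hg]$ copies of $V_0$ up to isogeny — no. Let me restructure: the cleaner statement is that $\gamma_V$ is trivial iff $\Eg^{c_V}[\Delta]\simeq \M_{[\Eg:\Hg]}(\Hg)$, iff $\End_{\Hg[G]}(\Res_{\Eg/\Hg}V)\simeq \M_{[\Eg:\Hg]}(\Hg)$, and the latter happens iff $\Res_{\Eg/\Hg}V$ is $[\Eg:\Hg]$ copies of an irreducible $\Hg[G]$-module $W$, and then $W\otimes_\Hg \Eg\simeq V$ gives the realization. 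This is exactly the descent formalism of Ribet.

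Concretely, the key steps are: (1) Observe $\Eg^{c_V}[\Delta]$ is a central simple $\Hg$-algebra with $[\Eg^{c_V}[\Delta]] = \gamma_V$ in $\Br(\Hg)$, so $\gamma_V$ trivial $\iff$ $\Eg^{c_V}[\Delta]\simeq \M_{d}(\Hg)$ with $d=[\Eg:\Hg]$. (2) By Lemma \ref{lemma: endsofres}, this is equivalent to $\mathcal R := \End_{\Hg[G]}(\Res_{\Eg/\Hg}V)\simeq \M_d(\Hg)$. (3) Since $\varrho$ is semisimple, $\Res_{\Eg/\Hg}V$ is a semisimple $\Hg[G]$-module, and its endomorphism algebra is a product of matrix algebras over division algebras; $\mathcal R\simeq \M_d(\Hg)$ holds iff $\Res_{\Eg/\Hg}V\simeq W^d$ for a simple $\Hg[G]$-module $W$ with $\End_{\Hg[G]}(W)=\Hg$. (4) In that case, $W\otimes_\Hg\Eg$ is a semisimple $\Eg[G]$-module and $\Res_{\Eg/\Hg}(W\otimes_\Hg \Eg)\simeq W^d$, so by comparing with $\Res_{\Eg/\Hg}V\simeq W^d$ and using that restriction of scalars from $\Eg$ to $\Hg$ is faithful and reflects isomorphism of semisimple modules (or: counting, and that $W\otimes_\Hg\Eg$ shares the same restriction), one deduces $W\otimes_\Hg\Eg\simeq V$, i.e. $\varrho$ is realizable over $\Hg$ via $W$. (5) Conversely, if $\varrho$ is realized by $\varrho_0$ on $V_0$ over $\Hg$, then $V_0$ is simple with $\End_{\Hg[G]}(V_0)=\Hg$ (as argued above) and $\Res_{\Eg/\Hg}V\simeq \Res_{\Eg/\Hg}(\Eg\otimes_\Hg V_0)\simeq V_0^d$, so $\mathcal R\simeq\M_d(\Hg)$ and $\gamma_V$ is trivial.

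The main obstacle I expect is step (4): carefully justifying that once $\Res_{\Eg/\Hg}V\simeq W^d$, the $\Eg[G]$-module $W\otimes_\Hg\Eg$ is actually isomorphic to $V$ and not merely to some other $\Eg$-form with the same restriction. The resolution is that for semisimple modules, two $\Eg[G]$-modules with isomorphic restrictions to $\Hg[G]$ need not be isomorphic in general — but here $V$ is $\Eg$-irreducible with $\End_{\Eg[G]}V=\Eg$, so $\Res_{\Eg/\Hg}V$ is $W^d$ for a single simple $W$, and $W\otimes_\Hg\Eg$ decomposes into Galois-conjugate simple $\Eg[G]$-modules whose restrictions all equal $W$; matching the Jordan–Hölder content with $V$ and using $\End_{\Hg[G]}W=\Hg$ pins down $W\otimes_\Hg\Eg$ as an isotypic power of $V$ of the right dimension, hence $\simeq V$. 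This is precisely the content of \cite[\S6,\S8]{Rib92} adapted to the present setting, and I would cite that formalism for the routine verifications while spelling out the dimension bookkeeping.
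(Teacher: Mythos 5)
Your final argument (steps (1)--(5)) is correct, but it takes a genuinely different route from the paper, whose nontrivial direction is a hands-on Galois descent: writing $c_V(g,h)=\alpha(g)\,{}^g\alpha(h)\,\alpha(gh)^{-1}$, replacing $A_g$ by $B_g=A_g/\alpha(g)$ so that $B_{gh}=B_g\,{}^gB_h$, using the $B_g$ to define a semilinear $\Delta$-action on $\Res_{\Eg/\Hg}(V)\otimes\Eg\simeq\prod_{g\in\Delta}{}^gV$, and taking $\Delta$-invariants to produce the $\Hg$-form explicitly. You instead read everything off the algebra $\mathcal{R}=\End_{\Hg[G]}(\Res_{\Eg/\Hg}V)$: the class $\gamma_V$ is trivial if and only if $\mathcal{R}\simeq \M_d(\Hg)$ with $d=[\Eg:\Hg]$, if and only if $\Res_{\Eg/\Hg}V\simeq W^{\oplus d}$ with $\End_{\Hg[G]}(W)=\Hg$, and then $W\otimes_\Hg\Eg\simeq V$. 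Two remarks on making this airtight. First, your step (1) uses that $\Eg^{c_V}[\Delta]$ is a central simple $\Hg$-algebra of degree $d$ whose Brauer class is $\gamma_V$; this is the classical crossed-product fact, but it needs the commutation rule $[g]\,e={}^g(e)\,[g]$ for $e\in\Eg$, which Lemma \ref{lemma: endsofres} as stated does not record (it only gives $\lambda_g\lambda_h=c_V(g,h)\lambda_{gh}$), so you should check that the $\lambda_g$ are $g$-semilinear over $\Eg$ --- easy, and indeed this identification is exactly what the paper uses later in Proposition \ref{proposition: RibetCenter}. Second, your step (4) can be tightened: since $\End_{\Hg[G]}(W)=\Hg$, the module $W\otimes_\Hg\Eg$ has endomorphism algebra $\Eg$ and is therefore simple, and Frobenius reciprocity together with $W\hookrightarrow\Res_{\Eg/\Hg}V$ gives a nonzero, hence bijective, map $W\otimes_\Hg\Eg\to V$ of $\Eg[G]$-modules; this replaces the Jordan--H\"older matching you sketch. (Your first pass at the easy direction, claiming $\End_{\Hg[G]}(\Res_{\Eg/\Hg}V)\simeq\Eg$, was incorrect --- it is $\End_{\Hg[G]}(V_0^{\oplus d})\simeq\M_d(\Hg)$ --- but you corrected this yourself in step (5).) As for what each approach buys: the paper's descent is self-contained and constructive, exhibiting the $\Hg$-model as a fixed space without invoking crossed-product theory, while yours is shorter modulo that standard theory and meshes directly with the way the proposition is applied to split primes $\lambda\in\Sigma_A$ later in the paper.
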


\begin{proof}
If $\varrho$ is realizable over $\Hg$, then we may take all the $A_g$ to be trivial, and hence $\gamma_V$ is trivial.

If $\gamma_V$ is trivial, by Lemma \ref{lemma: endsofres} we have an isomorphism
\begin{equation}\label{eq: tensor brauer}\End(\Res_{\Eg/\Hg}(V))\simeq \M_{[\Eg\colon \Hg]}(\Hg),
  \end{equation}
  of $\Hg[G]$-modules. Therefore, $\Res_{\Eg/\Hg}(V)\simeq V_0^{[\Eg\colon \Hg]}$ for some $\Hg[G]$-module $V_0$. Since $\End_{\Eg[G]}(V)\simeq \Eg$, by tensoring \eqref{eq: tensor brauer} with $\Eg$ we see that $V_0\otimes_\Hg\Eg\simeq V$.
\end{proof}

\begin{remark}
Note that if $\Hg$ is a finite field, then $\Br(\Eg/\Hg)$ is trivial, and hence $\varrho$ is always realizable over $\Hg$. This is proven in the course of the proof of \cite[Lem. 3.1]{Jon16}.
\end{remark}

\section{$\lambda$-adic representations}\label{section: lambdaadic}

Let $A$ be an abelian variety, so that $A$ is genuinely of $\GL_n$-type, $E$ denotes a maximal subfield of $D=\End^0(A)$, and $H$ and $F$ denote the centers of $\End^0(A)$ and $\End^0(A_\kbar)$, respectively. In this section we prove part $i)$ of the \hyperref[thm:main]{Main Theorem}.

Throughout this section we will assume that $E/H$ is a Galois extension. In fact, the theory of central simple algebras shows that $E$ can always be chosen so that $E/H$ is cyclic. 
For a rational prime $\ell$, let $V_\ell(A)$ denote the $\ell$-adic rational Tate module of $A$, and denote by 
$$
\varrho_\ell\colon G_k\rightarrow \GL(V_\ell(A))
$$
the $\ell$-adic representation attached to $A$.
That $A$ is of $\GL_n$-type means that there exists a number field $E$ of degree $2\dim(A)/n$ and a $\Q$-algebra embedding $E\hookrightarrow \End^0(A)$.
For every prime $\Ll$ of $E$ dividing $\ell$, let $E_\Ll$ denote the completion of $E$ at $\Ll$ and let $V_\Ll(A)$ denote the tensor product $V_\ell(A) \otimes_{E\otimes \Q_\ell} E_\Ll$ taken with respect to the $\Q_\ell$-algebra map
$$
\sigma_\Ll\colon E \otimes \Q_\ell\simeq \bigoplus_{\Ll' \mid \ell} E_{\Ll'} \rightarrow E_\Ll\,. 
$$
Note that $V_\Ll(A)$ is an $E_\Ll$-vector space of dimension $n=2\dim(A)/[E:\Q]$. The $\Q_\ell$-linear action of $G_k$ on $V_\ell(A)$ gives rise to a $E_\Ll$-linear action on $V_\Ll(A)$. We will denote by
$$
\varrho_{\Ll}\colon G_k\rightarrow \GL(V_\Ll(A))\simeq \GL_n(E_\Ll)
$$
the corresponding $\Ll$-adic Galois representation.

Let $S$ be a finite set of primes of $k$ containing the primes of bad reduction for $A$ and those lying above $\ell$. Let $\Frob_\p$ denote a Frobenius element at a prime $\p$. The collection $(\varrho_\Ll)_\Ll$ is a strictly compatible $E$-rational system of $\Ll$-adic representations (see \cite{Ser89}, \cite[Chap. II]{Rib76}). This is implied by the fact that the trace $\Tr(\varrho_\Ll(\Frob_\p))$ belongs to $E$ for every prime $\p\not \in S$ and does not depend on the choice of $\Ll$.

\begin{lemma}\label{lemma:VLambda irrep}
	For every finite extension $L/k$, we have 
\[
	\End_{E_\Ll[G_L]}(V_\Ll(A))=E_\Ll.
\]
In particular, $V_\Ll(A)$ is absolutely irreducible as an $E_\Ll[G_L]$-module.
\end{lemma}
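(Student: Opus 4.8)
The plan is to reduce the statement to a purely algebraic computation of the centralizer of $E$ inside $\End^0(A_L)$, using Faltings' theorem to identify Galois endomorphisms with algebraic ones, and then to feed in the fact that $E$ is a maximal subfield not only of $\End^0(A)$ but of $\End^0(A_\kbar)$. Concretely, the $E$-action on $V_\ell(A)$ comes from $E\hookrightarrow \End^0(A)\subseteq \End^0(A_L)$, hence commutes with $G_L$, so $V_\ell(A)$ is a module over $(E\otimes\Q_\ell)[G_L]$. Faltings' isogeny theorem gives $\End_{\Q_\ell[G_L]}(V_\ell(A))=\End^0(A_L)\otimes_\Q\Q_\ell$, and taking the centralizer of $E\otimes\Q_\ell$ on both sides yields
\[
	\End_{(E\otimes\Q_\ell)[G_L]}(V_\ell(A))=C_{\End^0(A_L)}(E)\otimes_\Q\Q_\ell,
\]
since centralizers commute with the flat base change $-\otimes_\Q\Q_\ell$.

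Next I would compute $C_{\End^0(A_L)}(E)$. Because $A$ is genuinely of $\GL_n$-type, Proposition~\ref{prop:maximal of both} shows that $E$ is a maximal subfield of the central simple algebra $\End^0(A_\kbar)$; a maximal subfield of a central simple algebra is self-centralizing, so $C_{\End^0(A_\kbar)}(E)=E$. Since $E\subseteq \End^0(A_L)\subseteq \End^0(A_\kbar)$ (recall $L\subseteq\kbar$), intersecting with $\End^0(A_L)$ gives $C_{\End^0(A_L)}(E)=\End^0(A_L)\cap C_{\End^0(A_\kbar)}(E)=E$. Combining with the previous display, $\End_{(E\otimes\Q_\ell)[G_L]}(V_\ell(A))=E\otimes\Q_\ell$.

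To pass to a single prime $\Ll$, decompose $E\otimes\Q_\ell\cong\prod_{\Ll\mid\ell}E_\Ll$ through the orthogonal idempotents $e_\Ll$; correspondingly $V_\ell(A)=\bigoplus_{\Ll\mid\ell}e_\Ll V_\ell(A)=\bigoplus_{\Ll\mid\ell}V_\Ll(A)$ as $(E\otimes\Q_\ell)[G_L]$-modules, and there are no nonzero $(E\otimes\Q_\ell)[G_L]$-homomorphisms between the distinct summands. Hence $\bigoplus_{\Ll\mid\ell}\End_{E_\Ll[G_L]}(V_\Ll(A))=E\otimes\Q_\ell=\bigoplus_{\Ll\mid\ell}E_\Ll$; since each summand $\End_{E_\Ll[G_L]}(V_\Ll(A))$ visibly contains $E_\Ll$, comparing $E_\Ll$-dimensions termwise forces $\End_{E_\Ll[G_L]}(V_\Ll(A))=E_\Ll$. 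For the final assertion, $V_\ell(A)$ is a semisimple $\Q_\ell[G_L]$-module by Faltings, hence so is its direct summand $V_\Ll(A)$, and, $E_\Ll/\Q_\ell$ being separable, $V_\Ll(A)$ is even semisimple as an $E_\Ll[G_L]$-module; a semisimple $E_\Ll[G_L]$-module whose endomorphism ring is $E_\Ll$ is simple, and a simple module with scalar endomorphism ring is absolutely simple by the Jacobson density theorem.

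The one step demanding care is the identity $C_{\End^0(A_L)}(E)=E$: this is precisely where the hypothesis of being genuinely of $\GL_n$-type enters, through the maximality of $E$ in $\End^0(A_\kbar)$ rather than merely in $\End^0(A)$, and one must note that restricting the ambient algebra from $\End^0(A_\kbar)$ down to $\End^0(A_L)$ can only shrink the centralizer, not enlarge it. The remaining semisimplicity bookkeeping used for the "in particular" is routine in characteristic zero.
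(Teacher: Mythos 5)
Your proof is correct and follows essentially the same route as the paper: Faltings' theorem identifies $\End_{\Q_\ell[G_L]}(V_\ell(A))$ with $\End^0(A_L)\otimes\Q_\ell$, the genuinely-of-$\GL_n$-type hypothesis (via Proposition \ref{prop:maximal of both}) makes $E$ self-centralizing in $\End^0(A_L)$, and one then decomposes $E\otimes\Q_\ell\simeq\bigoplus_{\Ll\mid\ell}E_\Ll$ and compares the factors. You merely spell out two steps the paper leaves implicit, namely the centralizer computation $C_{\End^0(A_L)}(E)=\End^0(A_L)\cap C_{\End^0(A_\kbar)}(E)=E$ and the deduction of absolute irreducibility from semisimplicity plus scalar endomorphisms, both of which are fine.
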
 
\begin{proof}
	This follows from the maximality of $E$ in $\End^0(A_\kbar)$, as shown in \cite[p. 628]{Chi87} and \cite[\S0.11.1]{Zar89}. We give the proof for completeness.
	By \cite[Satz 4]{Fal83}, we have $\End_{\QQ_\ell[G_L]}(V_\ell(A)) \simeq \End^0(A_L)\otimes \QQ_\ell$. Since $A$ is genuinely of $\GL_n$-type, the field $E$ is its own commutant in $\End^0(A_L)$, and therefore 
	\[
	\End_{E\otimes\QQ_\ell[G_L]}(V_\ell(A)) = E\otimes\QQ_\ell \simeq \bigoplus_{\Ll'\mid \ell}E_{\Ll'}.
	\]
	The above isomorphism implies that the containment of $E_{\Ll'}$ in $\End_{E_{\Ll'}[G_L]}(V_{\Ll'}(A))$ for each $\Ll'\mid\ell$ must in fact be an equality.
\end{proof}

We will denote by $\lambda$ the prime of~$H$ lying below $\Ll$. Let $V_\lambda(A)$ denote the tensor product $V_\ell(A) \otimes_{H\otimes \Q_\ell} H_\lambda$ taken with respect to the $\Q_\ell$-algebra map
$$
\sigma_\lambda\colon H \otimes \Q_\ell\simeq \bigoplus_{\lambda' \mid \ell} H_{\lambda'} \rightarrow H_\lambda\,. 
$$
Note that $V_\lambda(A)$ is an $H_\lambda$-vector space of dimension $nt_A=2\dim(A)/[H:\Q]$, where $t_A=[E:H]$. One has the relation
\begin{equation}\label{equation: relVlVL}
V_\lambda(A)= \bigoplus_{\Ll'\mid \lambda } \Res_{E_{\Ll}/H_\lambda}V_{\Ll'}(A)\,. 
\end{equation}

A variant of the first part of the following proposition in a more restrictive situation can be found in the course of the proof of \cite[Thm. 5.3]{Rib92}.  Let $\Sigma_A$ denote the set of primes $\lambda$ in $H$ at which $D$ splits, that is, the set of primes $\lambda$ of $H$ such that $D\otimes_H H_\lambda$ is trivial as an element of $\Br(H_\lambda)$. Note that $\Sigma_A$ misses only a finite number of primes of $H$.

\begin{proposition}\label{proposition: RibetCenter}
For every prime $\p$ in $k$ not in $S$, the trace $\Tr(\varrho_\Ll(\Frob_\p))$ belongs to $H$. If $\lambda\in \Sigma_A$, the representation $\varrho_{\Ll}$ is realizable over $H_\lambda$.
\end{proposition}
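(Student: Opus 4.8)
The plan is to prove the two claims separately. For the first, strict compatibility of the system already ensures that $\Tr(\varrho_\Ll(\Frob_\p))$ is an element of $E$ not depending on the choice of $\Ll\mid\ell$, so it suffices to see that it is fixed by $\Gal(E/H)$. Fix $g\in\Gal(E/H)=\Aut_H(E)$. Since $E\subseteq D=\End^0(A)$, the Skolem--Noether theorem provides $u_g\in D^\times$ with $u_g e u_g^{-1}=g(e)$ for all $e\in E$, and by \cite[Satz 4]{Fal83} we may view $u_g$ inside $(D\otimes\Q_\ell)^\times=\End_{\Q_\ell[G_k]}(V_\ell(A))^\times$. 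Conjugation by $u_g$ carries $E\otimes\Q_\ell$ to itself via $g\otimes\mathrm{id}$, hence permutes its primitive idempotents as $g$ permutes the primes above $\ell$; thus $u_g$ restricts to a $\Q_\ell[G_k]$-linear, $g$-semilinear isomorphism $V_\Ll(A)\xrightarrow{\sim}V_{g\Ll}(A)$. Since the trace of a $g$-semilinear conjugation is the $g$-image of the trace, this gives $g\bigl(\Tr(\varrho_\Ll(\Frob_\p))\bigr)=\Tr(\varrho_{g\Ll}(\Frob_\p))$, while compatibility gives $\Tr(\varrho_{g\Ll}(\Frob_\p))=\Tr(\varrho_\Ll(\Frob_\p))$. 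Hence $\Tr(\varrho_\Ll(\Frob_\p))\in E^{\Gal(E/H)}=H$.

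For the second claim, assume $\lambda\in\Sigma_A$. The extension of completions $E_\Ll/H_\lambda$ is Galois because $E/H$ is, and I would invoke the cohomological formalism developed above with $G=G_k$, $\Hg=H_\lambda$, $\Eg=E_\Ll$, $V=V_\Ll(A)$ and $\varrho=\varrho_\Ll$. Its hypotheses are met: $\varrho_\Ll$ is semisimple since $\varrho_\ell$ is \cite{Fal83}; $\End_{E_\Ll[G_k]}(V_\Ll(A))=E_\Ll$ by Lemma~\ref{lemma:VLambda irrep}; and by the first part, together with density of Frobenius elements in $G_k$ and continuity, $\Tr(\varrho_\Ll(s))\in H_\lambda$ for every $s\in G_k$, which in characteristic $0$ is precisely the condition that the characteristic polynomials of $\varrho_\Ll$ have coefficients in $H_\lambda$. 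By Proposition~\ref{proposition: trivcocreal}, $\varrho_\Ll$ is realizable over $H_\lambda$ if and only if the class $\gamma_{V_\Ll}\in\Br(E_\Ll/H_\lambda)$ vanishes.

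It remains to identify $\gamma_{V_\Ll}$ with the Brauer class of $D\otimes_H H_\lambda$. By \cite{Fal83}, $\End_{H_\lambda[G_k]}(V_\lambda(A))\simeq D\otimes_H H_\lambda$, which is central simple over $H_\lambda$. Combining the decomposition \eqref{equation: relVlVL} with the $g$-semilinear isomorphisms from the first part --- which become $H_\lambda$-linear after restriction of scalars, as $g\colon E_\Ll\to E_{g\Ll}$ fixes $H_\lambda$ --- one finds $V_\lambda(A)\simeq\bigl(\Res_{E_\Ll/H_\lambda}V_\Ll(A)\bigr)^{\oplus g_\lambda}$, where $g_\lambda$ is the number of primes of $E$ above $\lambda$. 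By Lemma~\ref{lemma: endsofres}, $\End_{H_\lambda[G_k]}(\Res_{E_\Ll/H_\lambda}V_\Ll(A))\simeq E_\Ll^{c_{V_\Ll}}[\Gal(E_\Ll/H_\lambda)]$, a central simple $H_\lambda$-algebra of Brauer class $\gamma_{V_\Ll}$. Comparing, $D\otimes_H H_\lambda\simeq\M_{g_\lambda}\bigl(E_\Ll^{c_{V_\Ll}}[\Gal(E_\Ll/H_\lambda)]\bigr)$, so $[D\otimes_H H_\lambda]=\gamma_{V_\Ll}$ in $\Br(H_\lambda)$. As $\lambda\in\Sigma_A$ means exactly that $D\otimes_H H_\lambda$ splits, we get $\gamma_{V_\Ll}=0$ and hence $\varrho_\Ll$ is realizable over $H_\lambda$.

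I expect the main obstacle to be the careful justification, in the first part, that the Skolem--Noether element $u_g\in D$ --- transported into $\End_{\Q_\ell[G_k]}(V_\ell(A))$ --- interchanges the $\Ll$-components of $V_\ell(A)$ for the $E\otimes\Q_\ell$-module structure exactly according to the action of $g$ on primes, and that the resulting isomorphism $V_\Ll(A)\xrightarrow{\sim}V_{g\Ll}(A)$ is $g$-semilinear, so that traces of Frobenius transform by $g$. This is the analogue, in our setting, of the $\Gal(E/\Q)$-equivariance of compatible systems used in \cite[\S6,\S8]{Rib92}; once it is established, the remaining steps are the essentially formal bookkeeping indicated above.
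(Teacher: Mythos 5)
Your proof is correct, and its second half is essentially the paper's argument: both identify the obstruction class $\gamma_{V_\Ll}$ of Proposition~\ref{proposition: trivcocreal} with the local Brauer class of $D\otimes_H H_\lambda$ by combining Faltings' isomorphism \eqref{equation: algebralambda} with Lemma~\ref{lemma: endsofres}, and then use that $\lambda\in\Sigma_A$ forces this class to vanish. Where you genuinely diverge is in how the key intermediate facts are obtained. The paper gets both the $H$-rationality of Frobenius traces and the isomorphisms $V_\Ll(A)\simeq {}^{g}V_{\Ll'}(A)$ from a dimension count: Faltings gives $\dim_{H_\lambda}\End_{H_\lambda[G_k]}(V_\lambda(A))=[E:H]^2$, and comparison with the upper bound coming from Lemma~\ref{lemma:VLambda irrep} forces all the relevant Hom spaces to be nonzero, whence equality of traces under the decomposition group. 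You instead construct the intertwiners explicitly: Skolem--Noether applied to the maximal subfield $E$ of the central simple $H$-algebra $D=\End^0(A)$ yields $u_g\in D^\times$, which acts $G_k$-equivariantly and $g$-semilinearly on $V_\ell(A)$, permutes the idempotents of $E\otimes\Q_\ell$, and hence gives semilinear isomorphisms $V_\Ll(A)\simeq V_{g\Ll}(A)$; together with the $E$-rationality and $\Ll$-independence of Frobenius traces (the compatible-system input which the paper also invokes, via Ribet and Chi, just before the statement), this gives the first claim, and after restriction of scalars it gives the decomposition $V_\lambda(A)\simeq\bigl(\Res_{E_\Ll/H_\lambda}V_\Ll(A)\bigr)^{\oplus g_\lambda}$ that feeds into \eqref{equation: relVlVL} and Lemma~\ref{lemma: endsofres}. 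Your route is more constructive and, for the first claim, needs only the elementary inclusion $D\otimes\Q_\ell\subseteq \End_{\Q_\ell[G_k]}(V_\ell(A))$ rather than the full strength of Faltings; the paper's dimension count, on the other hand, packages the trace rationality and the isomorphisms between the $V_{\Ll'}(A)$ in one stroke, and those isomorphisms are reused verbatim later (e.g.\ in Lemma~\ref{lemma: VlambdaGKmodule}); your $u_g$'s would serve the same purpose. Two small points to tighten: semisimplicity of $V_\Ll(A)$ over $E_\Ll[G_k]$ is cleanest quoted from Lemma~\ref{lemma:VLambda irrep} (absolute irreducibility) rather than deduced from semisimplicity of $V_\ell(A)$, and when you apply $g$ to $\Tr(\varrho_\Ll(\Frob_\p))$ you should say explicitly that the induced isomorphism $E_\Ll\to E_{g\Ll}$ restricts to $g$ on $E$ and to the identity on $H_\lambda$, which is what makes the conclusion $\Tr(\varrho_\Ll(\Frob_\p))\in E^{\Gal(E/H)}=H$ legitimate.
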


\begin{proof}
For each prime $\Ll'\mid \lambda$, we fix an isomorphism $E_\Ll\simeq E_{\Ll'}$ so that we may regard $V_{\Ll'}(A)$ as an $E_{\Ll}$-vector space. Let $D_\Ll\simeq \Gal(E_\Ll/H_\lambda)$ denote the decomposition group of $\Ll$ in $\Gal(E/H)$. Faltings isomorphism shows that 
\begin{equation}\label{equation: algebralambda}
D\otimes_H H_{\lambda}\simeq \End_{H_\lambda[G_k]}(V_\lambda(A)) \simeq \End_{H_\lambda[G_k]}\big( \bigoplus_{\Ll'\mid \lambda} \Res_{E_{\Ll}/H_\lambda} V_{\Ll'}(A)\big)\,.
\end{equation}
By Lemma \ref{lemma:VLambda irrep}, the $H_\lambda$-dimension of
$$
\End_{H_\lambda[G_k]}\big(  \Res_{E_{\Ll}/H_\lambda} V_{\Ll}(A)\big)\simeq \bigoplus_{g\in D_{\Ll}}\Hom_{E_\Ll[G_k]}( V_{\Ll}(A),{}^gV_{\Ll}(A))
$$
is at most $n_\lambda^2$, where $n_\lambda$ is the size of $D_{\Ll}$.
Since the number of primes in $E$ dividing $\lambda$ is $[E:H]/n_\lambda$, the $H_\lambda$-dimension of the right hand-side of \eqref{equation: algebralambda} is at most $[E:H]^2$,
with equality if and only if $V_{\Ll}(A)\simeq {}^gV_{\Ll'}(A)$ as $E_\Ll[G_k]$-modules for all $g \in D_\Ll$ and $\Ll'\mid \lambda $.  Since the $H_\lambda$-dimension of the left hand-side of \eqref{equation: algebralambda} is clearly $[E:H]^2$, we deduce that $\Tr(\varrho_\Ll(\Frob_\p))=g(\Tr(\varrho_\Ll(\Frob_\p)))$ for all $g \in D_\Ll$. Hence $\Tr(\varrho_\Ll(\Frob_\p))$ belongs to $E\cap H_\lambda$ for every prime $\p$ in $k$ not in $S$. Since $\lambda$ was arbitrary, this implies the first part of the proposition.

Let $\gamma_D$ be the class of $D$ in $\Br(E/H) \simeq H^2(\Gal(E/H),E^\times)$.
The class in $\Br(E_{\Ll}/H_\lambda)$ of the algebra on the right hand-side of \eqref{equation: algebralambda} is $\gamma_{D,\lambda}$, the image of $\gamma_D$ under the map $\Br(E/H)\rightarrow  \Br(E_{\Ll}/H_\lambda)$. By Lemma \ref{lemma: endsofres}, the class $\gamma_{D,\lambda}$ is the one governing the realizability of $\varrho_\Ll$ over $H_\lambda$ via Proposition \ref{proposition: trivcocreal}. The proposition then follows from the fact that for $\lambda \in \Sigma_A$ the class $\gamma_{D,\lambda}$ is trivial.
 \end{proof}

For every prime $\lambda\in \Sigma_A$, we have $D\otimes_H H_\lambda\simeq \M_{t_A}(H_\lambda)$. Hence the action of $D\otimes_H H_\lambda$ on $V_\lambda(A)$ implies the existence of an $H_\lambda$-vector space $W_\lambda(A)$ of dimension $n$ together with an isomorphism
$$
V_\lambda(A)\simeq W_\lambda(A)^{\oplus t_A} 
$$
of $H_\lambda[G_k]$-modules. We denote by $\varrho_\lambda$ the $\lambda$-adic representation encoding the action of $G_k$ on $W_\lambda(A)$. The next lemma shows that $\varrho_\lambda$ is an $H_\lambda$-realization of $\varrho_{\Ll}$.

\begin{lemma}\label{lemma: realVL} For every $\lambda \in \Sigma_A$, there is an isomorphism
$$
W_\lambda(A) \otimes_{H_\lambda}E_{\Ll}\simeq V_{\Ll}(A)
$$
 of $E_{\Ll}[G_k]$-modules. In particular, $W_\lambda(A)$ is absolutely irreducible as an $H_\lambda[G_K]$-module for every finite extension $K/k$.
\end{lemma}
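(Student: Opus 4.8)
The plan is to deduce the isomorphism by base changing everything up to the common field $E_\Ll$, where the representations are already understood. First I would identify the $E_\Ll[G_k]$-module $V_\lambda(A)\otimes_{H_\lambda}E_\Ll$. Starting from the decomposition \eqref{equation: relVlVL} and using that $E/H$ is Galois — so that, after fixing the same isomorphisms $E_{\Ll'}\simeq E_\Ll$ chosen in the proof of Proposition \ref{proposition: RibetCenter}, one has $E_{\Ll'}\otimes_{H_\lambda}E_\Ll\simeq\prod_{g\in\Gal(E_\Ll/H_\lambda)}E_\Ll$ for each prime $\Ll'\mid\lambda$ — each summand $\Res_{E_{\Ll'}/H_\lambda}V_{\Ll'}(A)$ becomes, after $\otimes_{H_\lambda}E_\Ll$, the direct sum $\bigoplus_{g}{}^gV_{\Ll'}(A)$ of Galois twists. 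I would then invoke the key fact established inside the proof of Proposition \ref{proposition: RibetCenter}, namely that ${}^gV_{\Ll'}(A)\simeq V_\Ll(A)$ as $E_\Ll[G_k]$-modules for every $\Ll'\mid\lambda$ and every $g\in\Gal(E_\Ll/H_\lambda)$. Counting summands — there are $t_A/n_\lambda$ primes above $\lambda$, each contributing $n_\lambda=[E_\Ll:H_\lambda]$ twists — this gives $V_\lambda(A)\otimes_{H_\lambda}E_\Ll\simeq V_\Ll(A)^{\oplus t_A}$, and note this holds for any $\lambda$.

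Next I would bring in the hypothesis $\lambda\in\Sigma_A$, which is exactly what provides the isomorphism $V_\lambda(A)\simeq W_\lambda(A)^{\oplus t_A}$ of $H_\lambda[G_k]$-modules. Base changing it to $E_\Ll$ and comparing with the previous step yields $\big(W_\lambda(A)\otimes_{H_\lambda}E_\Ll\big)^{\oplus t_A}\simeq V_\Ll(A)^{\oplus t_A}$ as $E_\Ll[G_k]$-modules. Since Lemma \ref{lemma:VLambda irrep} gives that $V_\Ll(A)$ is irreducible with $\End_{E_\Ll[G_k]}(V_\Ll(A))=E_\Ll$, the module $V_\Ll(A)^{\oplus t_A}$ is semisimple and isotypic, so the Krull--Schmidt theorem forces $W_\lambda(A)\otimes_{H_\lambda}E_\Ll$ to be a direct sum of copies of $V_\Ll(A)$; comparing $E_\Ll$-dimensions (both equal $n$) pins it down to exactly one copy, which is the asserted isomorphism.

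Finally, for the ``in particular'' clause I would restrict the isomorphism $W_\lambda(A)\otimes_{H_\lambda}E_\Ll\simeq V_\Ll(A)$ to $G_K$ for an arbitrary finite extension $K/k$ and further base change to an algebraic closure $\overline{H_\lambda}\supseteq E_\Ll$, obtaining $W_\lambda(A)\otimes_{H_\lambda}\overline{H_\lambda}\simeq V_\Ll(A)\otimes_{E_\Ll}\overline{H_\lambda}$ as $\overline{H_\lambda}[G_K]$-modules. The latter is irreducible because $V_\Ll(A)$ is absolutely irreducible as an $E_\Ll[G_K]$-module, again by Lemma \ref{lemma:VLambda irrep}; hence $W_\lambda(A)$ is absolutely irreducible as an $H_\lambda[G_K]$-module.

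I do not expect a serious obstacle: the whole argument is bookkeeping with tensor products and Galois twists. The one step that must be handled carefully is the passage from \eqref{equation: relVlVL} to $V_\Ll(A)^{\oplus t_A}$ after base change — one must reuse exactly the identifications $E_{\Ll'}\simeq E_\Ll$ of the proof of Proposition \ref{proposition: RibetCenter} and track the twists precisely, so that the total multiplicity comes out to $t_A$ rather than, say, $n_\lambda$ or $t_A/n_\lambda$. Once that bookkeeping is correct, the isotypic-decomposition step and the absolute-irreducibility deduction are entirely routine.
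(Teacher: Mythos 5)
Your argument is correct, but it takes a different route from the one in the paper. The paper's proof is a trace comparison: using \eqref{equation: relVlVL} and the statement (not just the proof) of the first part of Proposition \ref{proposition: RibetCenter}, it computes $t_A\cdot\Tr(\Frob_\p\mid W_\lambda(A))=\Tr(\Frob_\p\mid V_\lambda(A))=t_A\cdot\Tr(\Frob_\p\mid V_\Ll(A))$ for all $\p\notin S$ and concludes by the Chebotaryov density theorem (together with semisimplicity of both sides, which is implicit). You instead base-change the decomposition \eqref{equation: relVlVL} along $H_\lambda\to E_\Ll$, identify each summand with $\bigoplus_{g\in D_\Ll}{}^gV_{\Ll'}(A)$, and invoke the isomorphisms ${}^gV_{\Ll'}(A)\simeq V_\Ll(A)$ that are established \emph{inside} the proof of Proposition \ref{proposition: RibetCenter} (they hold for every $\lambda$, as you note, since the forced equality of dimensions in \eqref{equation: algebralambda} does not use $\lambda\in\Sigma_A$); then $\lambda\in\Sigma_A$ gives $V_\lambda(A)\simeq W_\lambda(A)^{\oplus t_A}$, and Krull--Schmidt plus the dimension count ($\dim_{E_\Ll}=n$ on both sides) cancels the multiplicity $t_A$. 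The multiplicity bookkeeping ($t_A/n_\lambda$ primes over $\lambda$, each contributing $n_\lambda$ twists) is exactly right, and the deduction of absolute irreducibility from Lemma \ref{lemma:VLambda irrep} is the same in both treatments. What the two approaches buy: the paper's proof is shorter because it can quote the trace rationality as a packaged statement, at the cost of appealing to Chebotaryov and leaving the semisimplicity step tacit; yours is purely module-theoretic, avoids Chebotaryov, and makes the descent mechanism transparent, at the cost of having to cite an intermediate isomorphism from within the proof of Proposition \ref{proposition: RibetCenter} rather than its statement.
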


\begin{proof}
Using \eqref{equation: relVlVL} and the first part of Proposition \ref{proposition: RibetCenter}, we find 
$$
t_A\cdot \Tr(\Frob_\p | W_\lambda(A))=\Tr(\Frob_\p | V_\lambda(A))=\frac{t_A}{n_\lambda}\sum_{g\in D_\Ll}g(\Tr(\Frob_\p|V_{\Ll}(A)))=t_A\cdot \Tr(\Frob_\p|V_{\Ll}(A))
$$ 
for every prime $\p\not \in S$. The lemma then follows from the Chebotaryov density theorem.
\end{proof}

The following result is the converse of the second part of Proposition \ref{proposition: RibetCenter}.
\begin{proposition}  
  Suppose that for some $\Ll$ the representation $V_\Ll(A)$ descends to $H_\lambda$, in the sense that there exists a representation $W$ over $H_\lambda$ such that $W\otimes_{H_\lambda}E_\Ll\simeq V_\Ll(A)$. Then $\lambda$ belongs to $\Sigma_A$.
\end{proposition}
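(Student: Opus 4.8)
The plan is to recognize the cohomological obstruction to realizing $\varrho_\Ll$ over $H_\lambda$, as supplied by the algebraic interlude, as precisely the local Brauer class $[D\otimes_H H_\lambda]$; the hypothesis then forces this class to be trivial, which is exactly the assertion $\lambda\in\Sigma_A$.

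First I would check that $\varrho_\Ll$, regarded as a representation over $\Eg=E_\Ll$ with subfield $\Hg=H_\lambda$ (note that $E_\Ll/H_\lambda$ is Galois, since $E/H$ is assumed Galois), satisfies the three standing hypotheses of the algebraic interlude. Both semisimplicity and the condition $\End_{E_\Ll[G_k]}(V_\Ll(A))=E_\Ll$ follow from Lemma \ref{lemma:VLambda irrep}, since an absolutely irreducible module is in particular irreducible, hence semisimple. For the condition that the characteristic polynomial of $\varrho_\Ll(s)$ have coefficients in $H_\lambda$ for every $s\in G_k$, I would use the first part of Proposition \ref{proposition: RibetCenter}: the traces $\Tr(\varrho_\Ll(\Frob_\p))$ for $\p\notin S$ lie in $H\subseteq H_\lambda$; since the corresponding Frobenius elements are dense in the group through which the unramified representation $\varrho_\Ll$ factors, $H_\lambda$ is closed in $E_\Ll$, and $\Tr\circ\varrho_\Ll$ is continuous, it follows that $\Tr(\varrho_\Ll(s))\in H_\lambda$ for all $s\in G_k$; as $H_\lambda$ has characteristic $0$, this is equivalent to the stated characteristic polynomial condition.

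With the hypotheses in place, Proposition \ref{proposition: trivcocreal} gives that $\varrho_\Ll$ is realizable over $H_\lambda$ --- which is precisely the assumption of the proposition --- if and only if the class $\gamma_{V_\Ll(A)}\in H^2(\Gal(E_\Ll/H_\lambda),E_\Ll^\times)\simeq\Br(E_\Ll/H_\lambda)$ vanishes; hence the assumption forces $\gamma_{V_\Ll(A)}=0$. It then remains to identify $\gamma_{V_\Ll(A)}$ with the image of $[D]$ under the restriction map $\Br(H)\to\Br(H_\lambda)$, that is, with $[D\otimes_H H_\lambda]$. For this I would reprise the computation carried out in the proof of Proposition \ref{proposition: RibetCenter}: Faltings' isomorphism \eqref{equation: algebralambda} identifies $D\otimes_H H_\lambda$ with $\End_{H_\lambda[G_k]}(V_\lambda(A))$, and the dimension count there shows that the bound is attained, so that $V_{\Ll'}(A)\simeq{}^gV_\Ll(A)$ for all $\Ll'\mid\lambda$ and all $g\in\Gal(E_\Ll/H_\lambda)$; consequently $V_\lambda(A)$, as an $H_\lambda[G_k]$-module, is a direct sum of copies of $\Res_{E_\Ll/H_\lambda}V_\Ll(A)$, whose endomorphism algebra is by Lemma \ref{lemma: endsofres} the crossed product of $E_\Ll$ by $\Gal(E_\Ll/H_\lambda)$ twisted by $c_{V_\Ll(A)}$, an algebra of Brauer class $\gamma_{V_\Ll(A)}$. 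Therefore $[D\otimes_H H_\lambda]=\gamma_{V_\Ll(A)}=0$, i.e.\ $\lambda\in\Sigma_A$.

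The verification of the hypotheses of the algebraic interlude is routine; the only step requiring some care is the identification $[D\otimes_H H_\lambda]=\gamma_{V_\Ll(A)}$, where one must make sure that the dimension count produces a genuine Brauer equivalence (a matrix algebra over the crossed product) rather than merely an embedding of the crossed product into $D\otimes_H H_\lambda$ --- though this is already implicit in the proof of Proposition \ref{proposition: RibetCenter}.
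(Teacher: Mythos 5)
Your proof is correct, but it takes a different route from the paper's. The paper argues directly with the descended module $W$: from $\End_{E_\Ll[G_k]}(V_\Ll(A))=E_\Ll$ it deduces $\End_{H_\lambda[G_k]}(W)=H_\lambda$, then uses the isomorphisms $V_{\Ll'}(A)\simeq V_\Ll(A)$ for $\Ll'\mid\lambda$ (already established in the proof of Proposition \ref{proposition: RibetCenter}) together with \eqref{equation: relVlVL} to obtain $V_\lambda(A)\simeq W^{\oplus t_A}$, and concludes via Faltings' isomorphism \eqref{equation: algebralambda} that $D\otimes_H H_\lambda\simeq \M_{t_A}(\End_{H_\lambda[G_k]}(W))\simeq \M_{t_A}(H_\lambda)$, which is precisely the statement $\lambda\in\Sigma_A$. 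You instead route everything through the cohomological formalism: descent forces the vanishing of the obstruction class $\gamma_{V_\Ll(A)}$ by Proposition \ref{proposition: trivcocreal}, and that class is identified with $[D\otimes_H H_\lambda]$ exactly as in the proof of Proposition \ref{proposition: RibetCenter} (Faltings plus Lemma \ref{lemma: endsofres} plus the dimension count showing that the $V_{\Ll'}(A)$, $\Ll'\mid\lambda$, all restrict to isomorphic $H_\lambda[G_k]$-modules). Both arguments rest on the same two pillars, namely Faltings' isomorphism and the pairwise isomorphy of the summands of $V_\lambda(A)$, but the paper's version is shorter and more concrete, exhibiting the split matrix algebra directly from $W$, whereas yours makes explicit that realizability of $\varrho_\Ll$ over $H_\lambda$ is \emph{equivalent} to $\lambda\in\Sigma_A$, packaging the second part of Proposition \ref{proposition: RibetCenter} and the present proposition as the two directions of a single Brauer-class criterion. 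Your preliminary verifications (traces in $H_\lambda$ via Chebotarev density and continuity, the Schur condition via Lemma \ref{lemma:VLambda irrep}, and the Galois property of $E_\Ll/H_\lambda$) are sound, and the caveat you raise about distinguishing a genuine Brauer equivalence from a mere embedding is handled correctly, since the endomorphism algebra of a direct sum of copies of $\Res_{E_\Ll/H_\lambda}V_\Ll(A)$ is a matrix algebra over the twisted group algebra, so the two classes agree up to a sign convention that is irrelevant for triviality.
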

\begin{proof}
 Since $\End_{E_\Ll[G_k]}(V_\Ll(A))=E_\Ll$, the inclusion $H_\lambda \subseteq \End_{H_\lambda[G_k]}(W)$ must be an equality. As we have seen in the proof of Proposition \ref{proposition: RibetCenter}, for any $\Ll'\mid \lambda$ we have that $V_{\Ll'}(A)\simeq V_\Ll(A)$ and from \eqref{equation: relVlVL} we see that
  \begin{align*}
    V_\lambda(A) \simeq W^{\oplus  t_A}.
  \end{align*}
 By Faltings isomorphism \eqref{equation: algebralambda} we have that
  \begin{align*}
    D\otimes_H H_\lambda\simeq  \End_{H_\lambda[G_k]} (W^{\oplus t_A})\simeq \M_{t_A}(\End_{H_\lambda [G_k]}(W))\simeq \M_{t_A}(H_\lambda),
  \end{align*}
  which means precisely that $\lambda\in\Sigma_A$.
\end{proof}

Suppose from now on that $A$ is geometrically of the first kind. Let $B$ denote the building block associated to $A$. By Proposition \ref{prop:genuinely GLn Brauer classes}, the degree $t_A=[E:H]$ divides the Schur index $t_B$ of $\End^0(B)$, which is at most $2$. In particular $E/H$ is Galois. Let $F$ denote the center of $\End^0(A_\kbar)$, and let $\gl$ be a prime of~$F$ lying below $\lambda$. Let $V_\gl(A)$ denote the tensor product $V_\ell(A) \otimes_{F\otimes \Q_\ell} F_\gl$ taken with respect to the $\Q_\ell$-algebra map
$$
\sigma_\gl\colon F \otimes \Q_\ell\simeq \bigoplus_{\gl' \mid \ell} F_{\gl'} \rightarrow F_\gl\,. 
$$
Note that $V_\gl(A)$ is an $F_\gl$-vector space of dimension $2\dim(A)/[F:\Q]$.
By Proposition~\ref{prop:H/F abelian}, the extension $H/F$ is abelian. In particular, for a prime $\lambda '\mid \gl$, we may fix an $F_\gl$-isomorphism $  H_\lambda \simeq H_{\lambda '} $, which we take to be the identity when $\lambda'=\lambda$, and regard $V_{\lambda'}(A)$ and $W_{\lambda'}(A)$ as $H_\lambda$-vector spaces. Let $K/k$ denote the minimal extension such that $\End(A_\kbar)=\End(A_K)$. Let $S_K$ be the set of primes of $K$ lying above primes in $S$.

\begin{lemma}\label{lemma: VlambdaGKmodule}
Suppose that $A$ is geometrically of the first kind. Let $\lambda,\lambda'\in \Sigma_A$ be primes lying over the same prime $\gl$ of $F$. Then we have isomorphisms 
$$
V_\lambda(A)\simeq V_{\lambda'}(A)\,, \qquad W_\lambda(A)\simeq W_{\lambda'}(A)
$$
of $H_\lambda[G_K]$-modules. Moreover, for every prime $\Pg\not\in S_K$, the trace $\Tr(\Frob_\Pg\mid W_\lambda(A))$ belongs to $F$.
\end{lemma}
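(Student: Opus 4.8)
The plan is to re-run the counting argument from the proof of Proposition~\ref{proposition: RibetCenter}, but with the abelian extension $H/F$ (Proposition~\ref{prop:H/F abelian}) in place of $E/H$, with $G_K$ in place of $G_k$, and with Lemma~\ref{lemma: realVL} supplying the absolute irreducibility that Lemma~\ref{lemma:VLambda irrep} supplied there. I would first record that the hypothesis $\lambda\in\Sigma_A$ forces $\mu\in\Sigma_A$ for \emph{every} prime $\mu$ of $H$ above $\gl$: since $H/F$ is abelian the local degrees $[H_\mu:F_\gl]$ do not depend on $\mu\mid\gl$, so by Proposition~\ref{prop:genuinely GLn Brauer classes} the local invariant of $\End^0(A)$ is the same at all such $\mu$.

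Since $\End^0(A_K)=\End^0(A_\kbar)$ has center $F$, Faltings' isomorphism — exactly \eqref{equation: algebralambda}, but over $G_K$ and for the center $F$ — gives an $F_\gl$-algebra isomorphism
\[
\End^0(A_\kbar)\otimes_F F_\gl\;\simeq\;\End_{F_\gl[G_K]}\Big(\bigoplus_{\mu\mid\gl}\Res_{H_\mu/F_\gl}V_\mu(A)\Big),
\]
and the left-hand side has $F_\gl$-dimension $[E:F]^2=t_A^2[H:F]^2$ because $E$ is a maximal subfield of $\End^0(A_\kbar)$ and $[E:H]=t_A$. Next I would bound the right-hand side. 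Writing $m_\gl:=[H_\mu:F_\gl]$ (independent of $\mu$), each $H_\mu/F_\gl$ is Galois and the $H_\mu$ are pairwise $F_\gl$-isomorphic; by the adjunction between restriction and extension of scalars, together with $H_{\mu'}\otimes_{F_\gl}H_\mu\simeq\prod_{g}H_{\mu'}$,
\[
\Hom_{F_\gl[G_K]}\big(\Res_{H_\mu/F_\gl}V_\mu(A),\Res_{H_{\mu'}/F_\gl}V_{\mu'}(A)\big)\;\simeq\;\bigoplus_{g}\Hom_{H_{\mu'}[G_K]}\big({}^gV_\mu(A),V_{\mu'}(A)\big),
\]
the sum running over the $m_\gl$ isomorphisms $g\colon H_\mu\xrightarrow{\sim}H_{\mu'}$ of $F_\gl$-algebras. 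Since $V_\mu(A)\simeq W_\mu(A)^{\oplus t_A}$ and, by Lemma~\ref{lemma: realVL}, the modules ${}^gW_\mu(A)$ and $W_{\mu'}(A)$ are absolutely irreducible over $H_{\mu'}[G_K]$, Schur's lemma bounds the dimension of each $\Hom_{H_{\mu'}[G_K]}({}^gW_\mu(A),W_{\mu'}(A))$ by $1$, so the displayed $\Hom$ has $F_\gl$-dimension at most $m_\gl^2t_A^2$. As there are $([H:F]/m_\gl)^2$ pairs $(\mu,\mu')$, the right-hand side has $F_\gl$-dimension at most $t_A^2[H:F]^2$; comparison with the left-hand side forces equality everywhere, i.e.\ ${}^gW_\mu(A)\simeq W_{\mu'}(A)$ as $H_{\mu'}[G_K]$-modules for all $\mu,\mu'\mid\gl$ and every such $g$. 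Taking for $g$ the fixed identification $H_\lambda\simeq H_{\lambda'}$ gives $W_\lambda(A)\simeq W_{\lambda'}(A)$, and hence $V_\lambda(A)\simeq V_{\lambda'}(A)$; this settles the first two assertions.

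For the trace statement I would take $\mu=\mu'=\lambda$ above and let $g$ range over $\Gal(H_\lambda/F_\gl)$: the isomorphisms ${}^gW_\lambda(A)\simeq W_\lambda(A)$ show that for every $\Pg\notin S_K$ the trace $\Tr(\Frob_\Pg\mid W_\lambda(A))$ is $\Gal(H_\lambda/F_\gl)$-invariant, hence lies in $F_\gl$. By Lemma~\ref{lemma: realVL} this trace equals $\Tr(\Frob_\Pg\mid V_\Ll(A))$, an element $a_\Pg\in E$ independent of $\Ll$ because the $\Ll$-adic system attached to $A$ is strictly compatible and $E$-rational. Because $\End^0(A)$ has Schur index at most $2$ (Corollary~\ref{corollary:schur index first kind}) it splits at all but finitely many primes of $H$, so the preceding paragraph gives $a_\Pg\in F_\gl$ for all but finitely many primes $\gl$ of $F$. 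Finally, an element of the number field $E$ that lies in $F_\gl$ for a density-one set of primes $\gl$ of $F$ must lie in $F$: were $L:=F(a_\Pg)$ of degree $>1$ over $F$, applying the Chebotarev density theorem to the Galois closure of $L/F$ — and using that a transitive permutation group of degree $>1$ contains a fixed-point-free element — would produce a positive density of primes $\gl$ admitting no degree-one prime of $L$ above them, contradicting $a_\Pg\in F_\gl$. Hence $a_\Pg\in F$, as wanted.

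The step I expect to give the most trouble is this last descent: the dimension count only delivers invariance of $\Tr(\Frob_\Pg\mid W_\lambda(A))$ under $\Gal(H_\lambda/F_\gl)$, so a priori it is only known to lie in the local field $F_\gl$, and passing to the global field $F$ forces one to play the different primes $\gl$ off against the $E$-rationality of the compatible system (an alternative, once available, is to use a $K$-model $B_0$ of $B$ with $A_K\sim B_0^r$ and invoke $\Tr(\Frob_\Pg\mid V_\gl(A))=r\,\Tr(\Frob_\Pg\mid V_\gl(B_0))\in F$).
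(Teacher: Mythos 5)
Your proposal is correct and takes essentially the same route as the paper: Faltings' isomorphism for $\End^0(A_K)\otimes_F F_\gl$ as a $G_K$-algebra, a dimension count against $[E:F]^2$ supplied by the maximality of $E$ in $\End^0(A_K)$, forcing the isomorphisms of the $\lambda$-adic modules over $G_K$ and hence the $\Gal(H_\lambda/F_\gl)$-invariance of Frobenius traces, followed by a varying-$\gl$ descent to $F$. The only cosmetic differences are that you organize the upper bound on the Hom blocks at the level of the $W_\mu$'s via Schur's lemma (after the correct observation that every prime of $H$ above $\gl$ lies in $\Sigma_A$, since $H/F$ is Galois), whereas the paper bounds at the level of the $V_{\lambda'}$'s using \eqref{equation: HlambdaEnd}, and that you spell out the Chebotarev argument for the final global descent which the paper leaves implicit.
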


\begin{proof}
By construction of $W_\lambda(A)$, it will suffice to show $V_\lambda(A)\simeq V_{\lambda'}(A)$.
In virtue of the first isomorphism of \eqref{equation: algebralambda}, we have 
\begin{equation}\label{equation: dimendk}
\dim_{H_\lambda}(\End_{H_\lambda[G_k]}(V_\lambda(A)))=[E:H]^2.
\end{equation}
In virtue of the second isomorphism of \eqref{equation: algebralambda} together with Lemma \ref{lemma:VLambda irrep}, we have
$$
\End_{H_\lambda[G_k]}(V_\lambda(A))=\End_{H_\lambda[G_K]}(V_\lambda(A)).
$$
The combination of the two previous equalities yields
\begin{equation}\label{equation: HlambdaEnd}
\dim_{H_\lambda}(\End_{H_\lambda[G_K]}(V_\lambda(A)))=[E:H]^2.
\end{equation}
By Faltings isomorphism we have that 
\begin{equation}\label{equation: algebragl}
\End^0(A_K)\otimes_F F_{\gl}\simeq \End_{F_\gl[G_K]}(V_\gl(A)) \simeq \End_{F_\gl[G_K]}\big( \bigoplus_{\lambda''\mid \gl} \Res_{H_{\lambda}/F_\gl} V_{\lambda''}(A)\big)\,
\end{equation}
as $F_\gl[G_K]$-modules. Note that every $\lambda''\mid\gl$ belongs to $\Sigma_A$, because the extension $H/F$ is Galois by Proposition \ref{prop:H/F abelian}, and hence $V_{\lambda''}(A)$ is an $H_\lambda$-module. The reasoning following \eqref{equation: algebralambda} together with \eqref{equation: HlambdaEnd} shows that the $F_\gl$-dimension of the right hand-side of \eqref{equation: algebragl} is at most $[E:H]^2[H:F]^2=[E:F]^2$, with equality if and only if $V_\lambda(A)\simeq {}^gV_{\lambda'}(A)$ as $H_\lambda[G_K]$-modules for all $g\in D_\lambda=\Gal(H_\lambda/F_\gl)$. We claim that equality must hold, since the $F_\gl$-dimension of the left hand-side of \eqref{equation: algebragl} is also $[E:F]^2$. To see the latter, simply note that $E$ is a maximal subfield of $\End^0(A_K)$, as shown by Remark~\ref{remark: maxfield} applied to $A_K$.

By the above paragraph, we have that $t_\Pg:=\Tr(\Frob_\Pg\mid W_\lambda(A))$ belongs to $H\cap F_\gl$. This implies that the residue degree of $\gl$ in $F(t_{\Pg})/F$ is 1. Since $\gl$ can be any prime of $F$ out of a finite set, by Chebotaryov this extension must be trivial.
\end{proof}

The following proposition corresponds to \cite[Lemma~5.10]{Pyl04} and \cite[Thm. 5.5]{Chi87} in a more general setting. Fix a prime $\lambda \in \Sigma_A$ lying over a prime  $\gl$ of $F$. For any $\gamma\in \Gal(H/F)$, we let $\varphi_\gamma$ be the $F_\gl$-isomorphism rendering the diagram
\begin{align*}
  \xymatrix{
H_{\lambda}  \ar[r]^{\varphi_\gamma} & H_{\gamma(\lambda)}\\
H \ar[r]^{\gamma}\ar[u]^{} & H  \ar[u]}
\end{align*}
commutative, where the vertical arrows are the natural inclusions. Define ${}^\gamma W_\lambda(A)$ to be $ W_{\gamma(\lambda)}(A)$ viewed as an $H_\lambda$-module via $\varphi_\gamma$. In the particular case where $\gamma$ is complex conjugation $c\in\Gal(H/F)$, we write $\overline{W}_\lambda(A)$ to denote ${}^cW_\lambda(A)$. 
\begin{proposition}\label{proposition: innertwistsWlambda}
Suppose that $A$ is geometrically of the first kind. For any $\gamma\in\Gal(H/F)$ there is an isomorphism 
$$
W_{\lambda}(A) \simeq \chi_\gamma^{-1} \otimes {}^\gamma W_{\lambda}(A)
$$
of $H_\lambda[G_k]$-modules, where $\chi_\gamma$ denotes the inner twist associated to $\gamma$. In particular, if $\varepsilon $ denotes the nebentype of $A$, we have an isomorphism 
$$
W_{\lambda}(A) \simeq \varepsilon \otimes \overline{W}_{\lambda}(A). 
$$
\end{proposition}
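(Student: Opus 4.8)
The strategy is to exhibit, for each $\gamma\in\Gal(H/F)$, an explicit $G_k$-equivariant isomorphism between $W_\lambda(A)$ twisted by $\chi_\gamma$ and $W_{\gamma(\lambda)}(A)$, using the endomorphism $\alpha(s)$ defining the inner twist as the ``intertwiner up to scalar'' between $\varrho_\ell$ and its conjugate. First I would recall that the choice of $\alpha(s)\in H^\times\subseteq D$ for $s\in\Gal(K/k)$ (extended trivially, or via a lift, to all of $G_k$) satisfies ${}^s\varphi=\alpha(s)\varphi\alpha(s)^{-1}$ on $\End^0(A_\kbar)$, and that the action of $\alpha(s)$ on the $\ell$-adic Tate module gives a relation of the form $\varrho_\ell(s)\circ\alpha(s) = \alpha(s)\circ\varrho_\ell(s)$ only up to the conjugation built into the compatible system; more precisely, the composite $\alpha(s)^{-1}\varrho_\ell(s)$ descends the conjugation action. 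The key point is that passing to the $\lambda$-component, $\alpha(s)$ acts on $V_\lambda(A)$, hence on $W_\lambda(A)$, as multiplication by the image of $\alpha(s)\in H^\times$ in $H_\lambda^\times$, while the Galois conjugation by $\gamma$ sends the $\lambda$-component to the $\gamma(\lambda)$-component and sends this scalar $\alpha(s)$ to ${}^\gamma\alpha(s)$.

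**Key steps.** (1) Use Lemma~\ref{lemma: realVL} to reduce the statement about $W_\lambda(A)$ to the analogous statement about $V_{\Ll}(A)$, or work directly with $W_\lambda(A)$ inside $V_\lambda(A) = W_\lambda(A)^{\oplus t_A}$; since $\varrho_\lambda$ is absolutely irreducible (Lemma~\ref{lemma: realVL}), any $G_k$-equivariant isomorphism between two realizations is unique up to scalar, so it suffices to check the isomorphism after extending scalars and on traces. (2) Establish the compatible-system identity: for $\p\notin S$ and $s=\Frob_\p$, one has in $D\otimes H_\lambda$ the relation expressing how $\alpha(s)$ conjugates $\varrho_\lambda(s)$ to ${}^s\varrho_\lambda(s)$; concretely $\varrho_{\gamma(\lambda)}(s) = {}^\gamma\!\big(\varrho_\lambda(s)\big)$ as far as traces go, because the compatible system is $H$-rational and Galois-conjugating the prime $\lambda$ to $\gamma(\lambda)$ conjugates the characteristic polynomial coefficients by $\gamma$, which are $H$-rational hence fixed — wait, this needs care: the traces lie in $H$ but the matrix entries do not, and it is exactly the conjugating element $\alpha(s)$ that measures the discrepancy. (3) Compute $\Tr(\varrho_\lambda(\Frob_\p)) = \Tr(\chi_\gamma(\Frob_\p)^{-1}\cdot {}^\gamma\!\varrho_{\gamma(\lambda)}(\Frob_\p))$ by inserting $\alpha(\Frob_\p)$: since conjugation by $\alpha(s)$ implements ${}^s(-)$ on $\End^0(A_\kbar)$ and $\gamma$-conjugation of the $\lambda$-adic data is implemented by ${}^\gamma\alpha(s)$, the ratio $\chi_\gamma(s) = {}^\gamma\alpha(s)/\alpha(s)$ is precisely the scalar correction, and Chebotarev finishes the trace identity. (4) Specialize $\gamma = c$ (complex conjugation), note $\overline\lambda = c(\lambda)\in\Sigma_A$ since $\Sigma_A$ is stable under $\Gal(H/F)$ (the Brauer class of $D$ over $H$ is Galois-stable, or more simply because $\End^0(B)\otimes_F H$ has Galois-stable split locus), and conclude $W_\lambda(A)\simeq\varepsilon\otimes W_{\overline\lambda}(A)$ with $\varepsilon = \chi_c^{-1}$.

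**Main obstacle.** The crux is step (2)–(3): making precise the sense in which $\alpha(s)$ ``intertwines'' $\varrho_\lambda$ with its $\gamma$-conjugate, and keeping straight whether the correction is $\chi_\gamma$ or $\chi_\gamma^{-1}$. The cleanest formulation is: the $G_k$-action on $V_\ell(A)$ together with the $\End^0(A_\kbar)$-action makes $V_\ell(A)$ a module over the twisted group algebra, and choosing $\alpha(s)$ identifies, over $\kbar$, the representation $\varrho_\lambda$ with ${}^\gamma\varrho_{\gamma(\lambda)}$ up to the cocycle-valued scalar $\chi_\gamma$; since both sides are absolutely irreducible $H_\lambda[G_k]$-modules with equal traces (verified via the explicit scalar computation on Frobenius elements and Chebotarev, exactly as in \cite[Lemma~5.10]{Pyl04} and \cite[Thm.~5.5]{Chi87}), they are isomorphic. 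I would model the bookkeeping on Ribet's treatment in \cite[\S5]{Rib92}, where the analogous $n=2$ identity $\varrho_\lambda\simeq\chi_\gamma^{-1}\otimes{}^\gamma\varrho_\lambda$ is established, and simply transport it through the restriction-of-scalars formalism of Section~\ref{section: lambdaadic}. The only genuinely new input beyond \cite{Rib92} is that one must work with $W_\lambda(A)$ rather than $V_\lambda(A)$ when $t_A = 2$, but absolute irreducibility of $\varrho_\lambda$ (Lemma~\ref{lemma: realVL}) makes the descent of the isomorphism from $V_\lambda$ to $W_\lambda$ automatic.
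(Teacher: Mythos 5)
Your overall strategy (reduce to an identity of Frobenius traces, invoke absolute irreducibility and Chebotarev, and let the $\alpha(s)$ measure the discrepancy) is in the right circle of ideas, but there is a genuine gap exactly at the point you flag with ``this needs care''. The heart of the proposition is its restriction to $G_K$: the claimed isomorphism restricted to $G_K$ says $W_\lambda(A)\simeq W_{\gamma(\lambda)}(A)$ as $H_\lambda[G_K]$-modules, equivalently that the Frobenius traces of $\varrho_\lambda|_{G_K}$ lie in $F$ and hence are fixed by $\gamma$ (already your trace identity at primes split in $K/k$ asserts this). Your mechanism of ``inserting $\alpha(\Frob_\p)$'' cannot produce it: the elements $\alpha(s)$ lie in $H^\times$, so they act on $V_\ell(A)$ through $H\otimes\Q_\ell$, preserving every $\lambda'$-component and acting on each by a scalar; neither $\alpha(s)$ nor ${}^\gamma\alpha(s)$ intertwines $V_\lambda(A)$ with $V_{\gamma(\lambda)}(A)$, so the phrase ``$\gamma$-conjugation of the $\lambda$-adic data is implemented by ${}^\gamma\alpha(s)$'' does not correspond to an actual operator. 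The needed intertwiner is a nonzero element of $\Hom_{H_\lambda[G_K]}(V_\lambda(A),V_{\gamma(\lambda)}(A))$, and its existence is precisely Lemma~\ref{lemma: VlambdaGKmodule}, proved by a dimension count in the Faltings isomorphism \eqref{equation: algebragl} over $K$ which uses the maximality of $E$ in $\End^0(A_K)$, i.e.\ the hypothesis that $A$ is genuinely of $\GL_n$-type. This, not the passage from $V_\lambda$ to $W_\lambda$ when $t_A=2$, is the genuinely new input beyond the $\GL_2$ arguments of \cite{Rib80}, \cite{Rib92}, \cite{Pyl04}, \cite{Chi87}, so it cannot simply be ``transported'' from those references; as written, your step (2)--(3) assumes what is to be proved on $G_K$.

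Once that lemma is granted, the rest of your outline does work and is close to the paper's argument, which however avoids traces altogether: the paper decomposes $\End^0(A_K)\otimes_F F_\gl$ as $\bigoplus_{\lambda',\lambda''\mid\gl}\Hom_{H_\lambda[G_K]}(V_{\lambda'}(A),V_{\lambda''}(A))$, observes via \eqref{equation: alphacompatibility} that $s\in\Gal(K/k)$ acts on the $(\lambda,\gamma(\lambda))$-factor by multiplication by ${}^{\gamma}\alpha(s)/\alpha(s)=\chi_\gamma(s)$, and deduces the twisted isomorphism for $V_\lambda(A)$ directly, descending to $W_\lambda(A)$ by construction; the role of absolute irreducibility (your Lemma~\ref{lemma: realVL}) enters earlier, in the dimension counts. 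Your remark that $\Sigma_A$ is stable under $\Gal(H/F)$, so that $W_{\gamma(\lambda)}(A)$ is actually defined, is a correct point that the paper leaves implicit, but it does not repair the missing step above.
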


\begin{proof}
It will suffice to show that there is an isomorphism 
\begin{equation}\label{equation: isoneben}
V_{\lambda}(A) \simeq \chi_\gamma^{-1} \otimes {}^\gamma V_{\lambda}(A)
\end{equation}
of $H_\lambda[G_k]$-modules, where ${}^\gamma V_{\lambda}(A)$ is defined similarly as ${}^\gamma W_{\lambda}(A)$. By Lemma \ref{lemma: VlambdaGKmodule}, we have $V_\lambda(A) \simeq {}^\gamma V_{\lambda}(A)$ as $H_\lambda[G_K]$-modules. Showing \eqref{equation: isoneben} amounts to showing that $\Gal(K/k)$ acts via $\chi_\gamma$ on 
$$
\Hom_{H_\lambda[G_K]}(V_\lambda(A),{}^\gamma V_{\lambda}(A))\simeq ({}^\gamma V_{\lambda}(A)\otimes V_{\lambda}(A)^\vee)^{\Gal(K/k)},
$$
where we use $\vee$ to denote the contragredient representation. From \eqref{equation: algebragl}, we obtain
$$
\End^0(A_K)\otimes_F F_\gl \simeq  \bigoplus_{\lambda' }\bigoplus_{\gamma\in\Gal(H/F)}\Hom_{H_\lambda[G_K]} \big( V_{\lambda'}(A), {}^\gamma V_{\lambda'}(A) \big).
$$ 
By \eqref{equation: alphacompatibility}, the action of $s \in G_k$ on $\End^0(A_K)$ is by conjugation by $\alpha(s)$. Via the above isomorphism, this action becomes multiplication by $\gamma(\alpha(s))/\alpha(s)=\chi_\gamma(s)$ on the factor $\Hom_{H_\lambda[G_K]} \big( V_{\lambda}(A), {}^\gamma V_{\lambda}(A) \big)$.
\end{proof}

\begin{lemma}\label{lemma: unique}
Suppose that $A$ is geometrically of the first kind. Let $\lambda,\lambda'\in \Sigma_A$. There exists at most one finite image character $\chi \colon G_k\rightarrow H_\lambda^ \times$ such that $W_\lambda(A)\simeq \chi\otimes W_{\lambda'} (A)$. In particular, if $\chi$ satisfies that $W_\lambda(A) \simeq \chi \otimes W_\lambda (A)$, then $\chi$ is trivial.
\end{lemma}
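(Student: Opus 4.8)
The plan is to first reduce the statement to its \emph{in particular} part, and then combine the absolute irreducibility of $W_\lambda(A)$ from Lemma~\ref{lemma: realVL} with a Schur's lemma argument.

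For the reduction, suppose $\chi_1,\chi_2\colon G_k\to H_\lambda^\times$ are finite image characters with $W_\lambda(A)\simeq \chi_i\otimes W_{\lambda'}(A)$ for $i=1,2$. Chaining these two isomorphisms yields $W_{\lambda'}(A)\simeq (\chi_1^{-1}\chi_2)\otimes W_{\lambda'}(A)$. The character $\chi:=\chi_1^{-1}\chi_2$ again has finite image: both $\chi_i$ factor through a common finite quotient $\Gal(L/k)$ of $G_k$, hence so does $\chi$. Thus it suffices to show that a finite image character $\chi\colon G_k\to H_\lambda^\times$ satisfying $W_\lambda(A)\simeq \chi\otimes W_\lambda(A)$ is trivial (applying this with $\lambda$ replaced by $\lambda'$ then gives $\chi_1=\chi_2$).

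To prove the \emph{in particular} part, let $f\in \GL_{H_\lambda}(W_\lambda(A))$ be an isomorphism realizing $W_\lambda(A)\simeq \chi\otimes W_\lambda(A)$, so that $f\circ \varrho_\lambda(s) = \chi(s)\,\varrho_\lambda(s)\circ f$ for every $s\in G_k$. Since $\chi$ has finite image, $\ker\chi$ is an open normal subgroup, and we let $L/k$ be the corresponding finite (Galois) extension, so that $G_L=\ker\chi$. For $s\in G_L$ the relation above reads $f\circ \varrho_\lambda(s)=\varrho_\lambda(s)\circ f$, that is, $f\in \End_{H_\lambda[G_L]}(W_\lambda(A))$. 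Because $\lambda\in\Sigma_A$, Lemma~\ref{lemma: realVL} gives $\End_{H_\lambda[G_L]}(W_\lambda(A))=H_\lambda$, so $f$ is multiplication by a scalar $c\in H_\lambda^\times$. Plugging $f=c$ back into the intertwining relation and using that each $\varrho_\lambda(s)$ is invertible and $c\neq 0$, we obtain $\chi(s)=1$ for all $s\in G_k$.

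The argument is essentially formal; the only input with real content is the absolute irreducibility of $W_\lambda(A)$ as an $H_\lambda[G_K]$-module for \emph{every} finite extension $K/k$ (Lemma~\ref{lemma: realVL}), which ultimately rests on the maximality of $E$ in $\End^0(A_\kbar)$. The only point requiring a small amount of care — the mild obstacle — is the finiteness of the image of $\chi$: it is precisely this hypothesis that lets us descend to a finite extension $L$ over which $W_\lambda(A)$ is still absolutely irreducible, and the conclusion genuinely fails without it.
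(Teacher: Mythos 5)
Your proof is correct and uses essentially the same ingredients as the paper's: both descend to the finite extension $L/k$ cut out by the character(s) and invoke the absolute irreducibility of $W_\lambda(A)$ over $G_L$ from Lemma~\ref{lemma: realVL} (Schur's lemma); the paper proves the uniqueness statement first, via the one-dimensionality of $\Hom_{H_\lambda[G_L]}(W_\lambda(A),W_{\lambda'}(A))$ and the character it affords, whereas you prove the self-twist triviality first and deduce uniqueness by chaining, an immaterial reordering. One small correction to your closing remark: the conclusion does not ``genuinely fail'' without the finite-image hypothesis, since taking determinants of $W_\lambda(A)\simeq\chi\otimes W_\lambda(A)$ gives $\chi^n=1$, so a self-twist character automatically has finite image; the hypothesis is needed only to run the descent step as you phrase it, not for the truth of the statement.
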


\begin{proof}
Let $\chi,\varphi \colon G_k\rightarrow H_\lambda^ \times$ be finite image characters such that 
$$
W_\lambda(A)\simeq \chi\otimes W_{\lambda'} (A)\quad \text{and} \quad W_\lambda(A)\simeq \varphi\otimes W_{\lambda'} (A)\,.
$$
Let $L/k$ be an extension such that $G_L$ is contained in the kernels of $\chi$ and $\varphi$. Lemma \ref{lemma: realVL} implies then that  
$$
\Hom_{G_L}(W_\lambda(A),W_{\lambda'}(A))
$$
has $H_\lambda$-dimension $1$. Since the choice of $L/k$ implies that this representation affords the characters $\chi$ and $\varphi$, we deduce that $\chi$ and $\varphi$ must be equal. 

The second statement of the lemma follows from the first. Let us give an additional slightly different proof.
Let $L/k$ be the field extension cut out by $\chi$. From the isomorphism $W_\lambda(A) \simeq \chi \otimes W_\lambda (A)$, we see that $\Tr(s|V_\lambda(A))=0$ for every $s\in G_k\setminus G_L$.
Hence
\begin{equation}\label{equation: induction formula}
\Ind^{G_L}_{G_k}\big(V_{\lambda}(A)|_{G_L}\big)=\bigoplus_{i=1}^{[L:k]}V_{\lambda}(A).
\end{equation}
By consecutively using \eqref{equation: HlambdaEnd}, Frobenius reciprocity, \eqref{equation: induction formula}, and \eqref{equation: dimendk}, we find
$$
\begin{array}{l@{\,=\,}l}
[E:H]^2 & \displaystyle{ \dim_{H_\lambda} \End_{G_L}(V_\lambda(A))}\\[6pt]
 & \displaystyle{\dim_{H_\lambda} \Hom_{G_k}(V_\lambda(A),\Ind^{G_L}_{G_k}(V_\lambda(A)|_{G_L}))}\\[6pt]
 & \displaystyle{[L:k]\cdot \dim_{H_\lambda} \End_{G_k}(V_\lambda(A))}\\[6pt]
  & \displaystyle{[L:k]\cdot [E:H]^2.}\\[6pt]
\end{array}
$$
Hence $[L:k]=1$ and $\chi$ is trivial.
\end{proof}

\begin{corollary}\label{cor:cut out}
Suppose that $A$ is geometrically of the first kind. The subextension of $K/k$ cut out by the inner twists $\chi_\gamma:\Gal(K/k)\rightarrow H^\times$, with $\gamma\in \Gal(H/F)$, is $K/k$ itself. In particular, $K/k$ is abelian. 
\end{corollary}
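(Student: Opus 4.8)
The plan is to reduce everything to the single statement that the map
\[
\Phi\colon\Gal(K/k)\longrightarrow\prod_{\gamma\in\Gal(H/F)}H^\times,\qquad \Phi(s)=(\chi_\gamma(s))_{\gamma\in\Gal(H/F)},
\]
is an injective group homomorphism. That it is a homomorphism is the content of the lemma following Definition~\ref{definition: inner twist} (each $\chi_\gamma$ is a character), applied coordinatewise. Granting injectivity, both assertions of the corollary are immediate: the subextension of $K/k$ cut out by the family $\{\chi_\gamma\}$ is by definition the fixed field of $\ker\Phi=\bigcap_\gamma\ker\chi_\gamma$, so it equals $K$; and $\Gal(K/k)$, which embeds into the abelian group $\prod_\gamma H^\times$ via $\Phi$, is therefore abelian.

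To prove injectivity, suppose $s\in\Gal(K/k)$ lies in $\ker\Phi$. Unwinding Definition~\ref{definition: inner twist}, this means ${}^\gamma\alpha(s)=\alpha(s)$ for every $\gamma\in\Gal(H/F)$. Since $A$ is geometrically of the first kind, Proposition~\ref{prop:H/F abelian} tells us that $H/F$ is abelian, in particular Galois, so an element of $H$ fixed by all of $\Gal(H/F)$ must lie in $F$; hence $\alpha(s)\in F^\times$. As $F=Z(\End^0(A_\kbar))=Z(\End^0(A_K))$, conjugation by $\alpha(s)$ acts as the identity on $\End^0(A_K)$, and so the compatibility relation~\eqref{equation: alphacompatibility} forces ${}^s\varphi=\varphi$ for every $\varphi\in\End^0(A_K)=\End^0(A_\kbar)$.

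It remains to deduce $s=1$ from the fact that $s$ acts trivially on $\End^0(A_\kbar)$, and this is exactly where the minimality of $K$ enters. Let $N\subseteq\Gal(K/k)$ be the kernel of the action on $\End^0(A_\kbar)$ and put $M=K^N$. The action of $G_k$ on $\End^0(A_\kbar)$ factors through $\Gal(K/k)$, since $G_K$ already acts trivially (all endomorphisms of $A_\kbar$ being defined over $K$); the image of $G_M$ under $G_k\twoheadrightarrow\Gal(K/k)$ is $\Gal(K/M)=N$, which acts trivially by construction. Hence $\End^0(A_M)=\End^0(A_\kbar)$, so $M=K$ by minimality of $K$ and $N=1$. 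Therefore $s=1$, $\Phi$ is injective, and the corollary follows. I expect the only genuinely delicate point to be this last paragraph — making precise that the $\Gal(K/k)$-action on $\End^0(A_\kbar)$ is faithful essentially by the definition of $K$ — while the rest is a routine unwinding of definitions once Proposition~\ref{prop:H/F abelian} and the character property of the inner twists are available.
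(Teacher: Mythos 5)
Your proof is correct, but it takes a genuinely different route from the paper. You argue purely algebraically: if all $\chi_\gamma(s)$ are trivial, then $\alpha(s)$ is fixed by $\Gal(H/F)$, hence lies in $F^\times$ (using that $H/F$ is Galois, from Proposition~\ref{prop:H/F abelian}), so by \eqref{equation: alphacompatibility} the element $s$ acts trivially on $\End^0(A_K)=\End^0(A_\kbar)$, and the minimality of $K$ — equivalently, the faithfulness of the $\Gal(K/k)$-action on $\End^0(A_\kbar)$, which you correctly justify via Galois descent of endomorphisms and $\End^0(A_M)=\End^0(A_\kbar)^{G_M}$ — forces $s=1$; abelianness then follows since $\Gal(K/k)$ embeds in $\prod_\gamma H^\times$. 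The paper instead deduces the corollary from its $\lambda$-adic apparatus: the proof of Lemma~\ref{lemma: VlambdaGKmodule} (resting on Faltings' isomorphism) identifies $K/k$ as the minimal extension over which $V_\lambda(A)\simeq V_{\lambda'}(A)$ for all $\lambda,\lambda'$ above a suitable prime $\gl$ of $F$, and Proposition~\ref{proposition: innertwistsWlambda} identifies the obstruction to such isomorphisms over subextensions with precisely the inner twists $\chi_\gamma$, so the field cut out by the twists is $K$. Your approach buys a more elementary, self-contained argument that never touches Tate modules and only uses the Skolem--Noether data $\alpha(s)$ together with Proposition~\ref{prop:H/F abelian}; the paper's approach is a one-line consequence of results it has already established for the main theorem and has the side benefit of exhibiting the link between the inner twists and the $\lambda$-adic representations explicitly. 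Both arguments are complete and valid.
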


\begin{proof}
Let $\gl$ be a prime of $F$ such that $\lambda \in \Sigma_A$ for every $\lambda\mid \gl$. It follows from \eqref{equation: algebragl} and the discussion that follows that $K/k$ is the minimal extension such that $V_\lambda(A) \simeq V_{\lambda'}(A)$ as $H_\lambda[G_K]$-modules for every $\lambda,\lambda'\mid \ell$. By Proposition \ref{proposition: innertwistsWlambda}, this coincides with the extension cut out by the inner twists $\chi_\gamma$, with $\gamma\in \Gal(H/F)$.
\end{proof}

We write $\chi_\ell:G_k\to \Q_\ell^\times$ for the cyclotomic character sending the Frobenius element $\Frob_{\mathfrak p}$ to $\Nm(\mathfrak p)$.

\begin{proposition}\label{proposition: pairing}
Suppose that $A$ is geometrically of the first kind. For each $\lambda \in \Sigma_A$, there exists a non-degenerate $G_k$-equivariant pairing
$$
\psi_\lambda: W_\lambda(A) \times W_\lambda(A)\rightarrow H_\lambda(\varepsilon \chi_\ell).
$$
One such pairing is unique up to multiplication by elements in $H_\lambda^\times$.
\end{proposition}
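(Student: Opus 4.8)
The plan is to translate the existence of $\psi_\lambda$ into the existence of a $G_k$-equivariant isomorphism $W_\lambda(A)\simeq W_\lambda(A)^\vee(\varepsilon\chi_\ell)$ of $H_\lambda[G_k]$-modules, where here $\vee$ denotes the $H_\lambda$-linear contragredient. Indeed, by adjunction an $H_\lambda$-bilinear pairing $W_\lambda(A)\times W_\lambda(A)\to H_\lambda(\varepsilon\chi_\ell)$ is the same datum as an $H_\lambda$-linear map $W_\lambda(A)\to W_\lambda(A)^\vee(\varepsilon\chi_\ell)$; the pairing is non-degenerate exactly when this map is an isomorphism, and $G_k$-equivariant exactly when the map is. So everything reduces to producing one such isomorphism, and the uniqueness clause will come from Schur's lemma.

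First I would run the Weil pairing through the $\lambda$-adic formalism of \S\ref{section: lambdaadic}. The polarization of $A$ fixed in \S\ref{section: innertwists} is defined over $k$, so its Weil pairing is a non-degenerate, alternating, $G_k$-equivariant $\QQ_\ell$-bilinear form $e\colon V_\ell(A)\times V_\ell(A)\to \QQ_\ell(\chi_\ell)$ satisfying $e(\varphi x,y)=e(x,\varphi' y)$ for all $\varphi\in\End^0(A)$, where $'$ is the Rosati involution. Since $'$ restricts to complex conjugation $c$ on $H$, writing $V_\ell(A)=\bigoplus_{\mu\mid\ell} V_\mu(A)$ one checks that $e$ pairs $V_\mu(A)$ with $V_\nu(A)$ only when $\nu=\overline{\mu}$, and that the restriction to $V_\mu(A)\times V_{\overline{\mu}}(A)$ is perfect; combining this with the non-degeneracy of the trace form $H_\mu\times H_\mu\to\QQ_\ell$ upgrades $e$ to a perfect, $G_k$-equivariant, $H_\lambda$-bilinear pairing $V_\lambda(A)\times V_{\overline{\lambda}}(A)\to H_\lambda(\chi_\ell)$, the $H_\lambda$-structure on the second factor being the one induced by $c$ (we use the identification $H_\lambda\simeq H_{\overline{\lambda}}$ fixed in \S\ref{section: lambdaadic}). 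Equivalently, $V_\lambda(A)\simeq V_{\overline{\lambda}}(A)^\vee(\chi_\ell)$ as $H_\lambda[G_k]$-modules. Using $V_\lambda(A)\simeq W_\lambda(A)^{\oplus t_A}$ and $V_{\overline{\lambda}}(A)\simeq W_{\overline{\lambda}}(A)^{\oplus t_A}$ together with the absolute irreducibility of $W_\lambda(A)$ and $W_{\overline{\lambda}}(A)$ (Lemma \ref{lemma: realVL}), uniqueness of the isotypic decomposition gives $W_\lambda(A)\simeq W_{\overline{\lambda}}(A)^\vee(\chi_\ell)$. Here $\overline{\lambda}\in\Sigma_A$ as well, so $W_{\overline{\lambda}}(A)$ is indeed defined; this is already implicit in \S\ref{section: innertwists} and \S\ref{section: lambdaadic}.

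Next I would invoke Proposition \ref{proposition: innertwistsWlambda} with $\gamma=c$: it gives $W_\lambda(A)\simeq\varepsilon\otimes W_{\overline{\lambda}}(A)$, hence $W_{\overline{\lambda}}(A)\simeq\varepsilon^{-1}\otimes W_\lambda(A)$ and, dualizing, $W_{\overline{\lambda}}(A)^\vee\simeq\varepsilon\otimes W_\lambda(A)^\vee$. Substituting into the isomorphism of the previous paragraph yields $W_\lambda(A)\simeq W_\lambda(A)^\vee(\varepsilon\chi_\ell)$, which is, as explained, the same as a non-degenerate $G_k$-equivariant $H_\lambda$-bilinear pairing $\psi_\lambda\colon W_\lambda(A)\times W_\lambda(A)\to H_\lambda(\varepsilon\chi_\ell)$. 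For uniqueness, two such pairings correspond to two $H_\lambda[G_k]$-isomorphisms $f,f'\colon W_\lambda(A)\to W_\lambda(A)^\vee(\varepsilon\chi_\ell)$, so $f^{-1}\circ f'\in\End_{H_\lambda[G_k]}(W_\lambda(A))=H_\lambda$, the last equality by Lemma \ref{lemma: realVL} (absolute irreducibility over $H_\lambda[G_K]$, a fortiori over $H_\lambda[G_k]$) and Schur's lemma; hence the two pairings differ by multiplication by an element of $H_\lambda^\times$.

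The main obstacle is bookkeeping rather than anything conceptual. When the $\QQ_\ell$-valued Weil pairing is repackaged as an $H_\lambda$-valued one, the $c$-semilinearity must be tracked with care: the first factor carries its natural $H_\lambda$-structure while the second carries the one twisted by $c$, and one has to verify that the resulting pairing is genuinely perfect and not merely $G_k$-equivariant, which is where the non-degeneracy of $e$ and of the local trace form both enter. One must also make sure that the identification $H_\lambda\simeq H_{\overline{\lambda}}$ implicit in this construction is the one underlying Proposition \ref{proposition: innertwistsWlambda}, so that the twist by the nebentype $\varepsilon$ comes out on the correct side and with the correct sign.
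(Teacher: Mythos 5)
Your proposal is correct, and its overall skeleton coincides with the paper's: both convert the pairing into a $G_k$-equivariant isomorphism $W_\lambda(A)\simeq W_\lambda(A)^\vee(\varepsilon\chi_\ell)$, reduce this via Proposition \ref{proposition: innertwistsWlambda} with $\gamma=c$ to the isomorphism $W_{\overline{\lambda}}(A)\simeq W_\lambda(A)^\vee(\chi_\ell)$, and obtain uniqueness from Schur's lemma combined with the absolute irreducibility of Lemma \ref{lemma: realVL}. Where you genuinely diverge is in the proof of that key isomorphism. The paper proves it softly: by semisimplicity and the Chebotaryov density theorem it suffices to match Frobenius traces, which follows from the Weil-number relation $\overline{\alpha}=\chi_\ell(\p)/\alpha$ for the eigenvalues of $\varrho_\lambda(\Frob_\p)$. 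You instead construct it structurally, decomposing the Weil pairing of a $k$-rational polarization along the primes of $H$ (Rosati acts as $c$ on $H$, so $V_\mu(A)$ pairs perfectly only with $V_{\overline{\mu}}(A)$), upgrading it through the trace form to a perfect $H_\lambda$-bilinear $G_k$-equivariant pairing $V_\lambda(A)\times V_{\overline{\lambda}}(A)\to H_\lambda(\chi_\ell)$, and descending to the $W$'s by uniqueness of the isotypic decomposition. Both routes ultimately rest on the polarization; yours demands the semilinear bookkeeping you flag (in particular the identification $H_{\overline{\lambda}}\simeq H_\lambda$ must be the one underlying Proposition \ref{proposition: innertwistsWlambda}), but in exchange it exhibits $\psi_\lambda$ explicitly as the classical $\lambda$-adic Weil pairing rather than deducing existence from a trace identity. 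The point you leave implicit, that $\overline{\lambda}\in\Sigma_A$, is indeed harmless: $\Sigma_A$ is stable under $\Gal(H/F)$ because $[\End^0(A)]$ is extended from $\Br(F)$ by Proposition \ref{prop:genuinely GLn Brauer classes}, an assumption the paper's own statement of Proposition \ref{proposition: innertwistsWlambda} also makes tacitly.
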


\begin{proof}
Let us recall a well known application of Schur's lemma. Let $L$ be a field and $G$ be a group. Let $W$ be an $L$-vector space equipped with an action of $G$ and let $\chi:G\rightarrow L^\times$ be a character. To a non-degenerate $G$-equivariant pairing
$$
\psi: W \times W \rightarrow L(\chi)
$$
there is an associated $G$-equivariant isomorphism
$$
\Psi: W\stackrel{\sim}{\longrightarrow } W^\vee(\chi)
$$
defined by $\Psi(w):=\psi(w,\cdot)$ for $w\in W$. This association is a bijection, with inverse associating to such a $\Psi$ the pairing $\psi$ defined by $\psi(w,w'):=\Psi(w)(w')$. 
If $W$ is absolutely irreducible as a $G$-representation, then this bijection together with Schur's lemma imply that, if $\psi$ exists, then it is unique up to multiplication by elements in $L^\times$.  

In view of the previous discussion and Lemma \ref{lemma: realVL}, it will suffice to show that there exists an isomorphism
\begin{equation}\label{equation: weilpairing}
W_\lambda(A)\simeq W_\lambda(A)^\vee(\varepsilon\cdot \chi_\ell).
\end{equation}
of $H_\lambda[G_k]$-modules. By Proposition~\ref{proposition: innertwistsWlambda}, this amounts to showing that
\begin{equation}\label{equation: weil pairing iso}
\overline W_\lambda(A)\simeq W_\lambda(A)^\vee(\chi_\ell)
\end{equation}
as $H_\lambda[G_k]$-modules. Since complex conjugation on $\Qbar$,  still denoted $\overline{\cdot }$,  restricts to the complex conjugation  on $H$, the eigenvalues of  $\Frob_\p$ acting on $\overline W_\lambda(A)$ are the complex conjugate of the eivenvalues of  $\Frob_\p$ acting on $ W_\lambda(A)$. By Weil, if $\alpha$ is an eigenvalue of $\varrho_\lambda(\Frob_\p)$, then the complex conjugate $\overline \alpha$ equals $ \chi_\ell(\Frob_\p)/\alpha$, which is the eigenvalue of $\Frob_\p$ acting on $  W_\lambda(A)^\vee(\chi_\ell) $. Given the semisimplicity of $\overline W_\lambda(A)$ and the Chebotaryov density theorem, this proves \eqref{equation: weil pairing iso}.

\end{proof}

Until the end of this section, let $\lambda\in \Sigma_A$. In \S\ref{section: symplective vs orthogonal}, we will determine the alternating or symmetric nature of $\psi_\lambda$ in terms of the building block $B$ associated to $A$. Let $\delta_\lambda:G_k\rightarrow H_\lambda^\times$ denote the determinant of $W_\lambda(A)$.  If $k$ is totally real, then there exists a finite order character $\chi\colon G_k\rightarrow H^\times$ such that
$$
\delta_\lambda =\chi\cdot \chi_\ell^{n/2}\,
$$
(recall that $n$ is even, by Remark~\ref{remark:n even}).
If $k=\Q$, this is \cite[Prop. 5.9]{Chi87}, and the same proof extends to the case of a general totally real base field $k$. We will now relate $\delta_\lambda$ to the nebentype character $\varepsilon$ in the case that $A$ is geometrically of the first kind (but with no assumption on the field $k$).

\begin{proposition}\label{proposition: characterChi}
Suppose that $A$ is geometrically of the first kind. Then  $\delta_\lambda=\varepsilon^{n/2}\cdot \chi_\ell^{n/2}$. If $H$ is totally real, then $\delta_\lambda=\chi_\ell^{n/2}$.
\end{proposition}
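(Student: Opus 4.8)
The plan is to extract the formula from the pairing $\psi_\lambda$ and then pin down a remaining sign by descending to $K$ and looking at the building block. First I would invoke Proposition~\ref{proposition: pairing}: the non-degenerate $G_k$-equivariant pairing $\psi_\lambda\colon W_\lambda(A)\times W_\lambda(A)\to H_\lambda(\varepsilon\chi_\ell)$ furnishes an isomorphism $W_\lambda(A)\simeq W_\lambda(A)^\vee(\varepsilon\chi_\ell)$ of $H_\lambda[G_k]$-modules. Taking $n$-th exterior powers (recall that $n$ is even by Remark~\ref{remark:n even}) yields $\delta_\lambda=\delta_\lambda^{-1}\cdot(\varepsilon\chi_\ell)^n$, so that
\[
\delta_\lambda^2=\varepsilon^n\chi_\ell^n.
\]
Since $\varepsilon$ has finite order, so does $\varepsilon^n$; hence the character $\delta_\lambda\chi_\ell^{-n/2}$, whose square is $\varepsilon^n$, also has finite order. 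Writing $\eta:=\delta_\lambda\varepsilon^{-n/2}\chi_\ell^{-n/2}$ we get $\eta^2=1$, and it remains to prove that the quadratic character $\eta$ is trivial.

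For this I would restrict to $G_K$. By Lemma~\ref{lemma: realVL} the representation $\varrho_\lambda|_{G_K}$ is absolutely irreducible over $H_\lambda$, by Lemma~\ref{lemma: VlambdaGKmodule} it has $F$-rational Frobenius traces, and since $\varepsilon|_{G_K}$ is trivial, the restriction of $\psi_\lambda$ is a non-degenerate $G_K$-equivariant pairing with values in $H_\lambda(\chi_\ell)$. These are exactly the properties needed to identify $W_\lambda(A)|_{G_K}$, after descent to $F_\gl$ (where $\gl$ is the prime of $F$ below $\lambda$), with the absolutely irreducible $\gl$-adic representation $\varrho_\gl$ of $G_K$ attached to a model over $K$ of the building block $B$; one may assume here that $\gl$ lies in $\Sigma_B$, since $d_\p:=\delta_\lambda(\Frob_\p)\in H^\times$ is the constant term of the compatible system $\{\varrho_\Ll\}$ and therefore independent of $\lambda$, so the asserted identity is a statement about the $d_\p$ alone and may be checked for a single convenient $\lambda$. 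Consequently $\delta_\lambda|_{G_K}=\det\varrho_\gl$, and the work of Chi, Banaszak, Gajda and Kraso\'n gives $\det\varrho_\gl=\chi_\ell^{n/2}$ (for Albert types $\I$ and $\II$ this is just the Pfaffian applied to the alternating pairing that the quaternionic structure of $\End^0(B)$ induces on $\varrho_\gl$, while type $\III$ is also covered by their determinant computation). Hence $\eta|_{G_K}$ is trivial, i.e.\ $\eta$ factors through $\Gal(K/k)$.

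To kill $\eta$ on all of $\Gal(K/k)$ I would compare Frobenius values. The eigenvalues $\beta_1,\dots,\beta_n$ of $\varrho_\lambda(\Frob_\p)$ are weight-one Weil numbers, so $\beta_i\overline{\beta_i}=\Nm(\p)$, and the pairing $\psi_\lambda$ forces the multiset $\{\beta_i\}$ to be stable under the involution $\beta\mapsto\varepsilon(\Frob_\p)\overline\beta$. Whenever this involution is fixed-point-free on $\{\beta_i\}$ — equivalently, whenever $T^2-\varepsilon(\Frob_\p)\Nm(\p)$ does not divide the characteristic polynomial of $\varrho_\lambda(\Frob_\p)$ — pairing up the $\beta_i$ gives $d_\p=(\varepsilon(\Frob_\p)\Nm(\p))^{n/2}$, that is, $\eta(\Frob_\p)=1$. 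The remaining point is that the ``degenerate'' primes $\p$ (those for which $\pm\sqrt{\varepsilon(\Frob_\p)\Nm(\p)}$ occurs among the $\beta_i$ with odd multiplicity) have density zero: since $\varrho_\lambda|_{G_K}$ is absolutely irreducible and realized by the simple building block $B$, the occurrence of a prescribed Frobenius eigenvalue is a density-zero condition, and a finite-order character trivial on a set of $\Frob_\p$ of density one is trivial. This yields $\eta=1$ and hence $\delta_\lambda=\varepsilon^{n/2}\chi_\ell^{n/2}$.

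The last assertion is then immediate: if $H$ is totally real, complex conjugation acts trivially on $H$, so $c=\mathrm{id}\in\Aut(H/F)$, whence the inner twist $\chi_c$ and the nebentype $\varepsilon=\chi_c^{-1}$ are trivial and the formula reduces to $\delta_\lambda=\chi_\ell^{n/2}$. The hard part of the argument is the triviality of $\eta$: the pairing and the Hodge–Tate weights only determine $\delta_\lambda$ up to a quadratic twist, and removing that ambiguity requires genuine input from the building block — either its determinant over $K$ together with the eigenvalue analysis above, or a direct density statement about degenerate Frobenii.
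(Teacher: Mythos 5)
Your first two steps are fine (the self-duality $W_\lambda(A)\simeq W_\lambda(A)^\vee(\varepsilon\chi_\ell)$ does give $\delta_\lambda^2=\varepsilon^n\chi_\ell^n$, so $\eta:=\delta_\lambda\varepsilon^{-n/2}\chi_\ell^{-n/2}$ is quadratic), and the reduction to a convenient $\lambda$ via compatibility is legitimate, but the two inputs you use to kill $\eta$ both have genuine gaps. First, the claim $\det\varrho_\gl=\chi_\ell^{n/2}$ on $G_K$ is only automatic in the symplectic cases (types $\I$, $\II$), where the determinant of a symplectic similitude equals $\nu^{n/2}$. In the type $\III$ case, Theorem \ref{theorem:C-BGK types I II} (and the cited results of Chi and Banaszak--Gajda--Kraso\'n) only provide a symmetric pairing with multiplier $\chi_\ell$, i.e.\ an image in $\GO_n(F_\gl)$; for an orthogonal similitude one only has $\det=\pm\nu^{n/2}$, so these results pin $\det\varrho_\gl$ down exactly up to a quadratic character of $G_K$ --- which is precisely the ambiguity you are trying to remove. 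There is no ``determinant computation'' to appeal to here; the statement $\eta|_{G_K}=1$ in the orthogonal case is an instance of the proposition itself (restricted to $K$, where $\varepsilon$ is trivial), so this step is unproven for type $\III$.

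Second, the descent from $G_K$ to $G_k$ via ``degenerate primes have density zero'' is circular. If $\eta$ were a nontrivial quadratic character trivial on $G_K$, then for every $\p$ with $\eta(\Frob_\p)=-1$ (a set of density $1/2$) one has $\det\varrho_\lambda(\Frob_\p)=-(\varepsilon(\Frob_\p)\Nm(\p))^{n/2}$, and your own orbit analysis then forces an eigenvalue $\pm\sqrt{\varepsilon(\Frob_\p)\Nm(\p)}$ of odd multiplicity: every such prime is ``degenerate''. So the density-zero assertion is equivalent to the conclusion $\eta=1$, and absolute irreducibility of $\varrho_\lambda|_{G_K}$ (or the fact that it comes from the simple building block) does not make ``a prescribed Frobenius eigenvalue occurs'' a density-zero condition --- that would require genuine largeness-of-image input which you neither have nor invoke. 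By contrast, the paper's proof is a short formal argument with no building-block or density input: taking $\wedge^{n/2}$ of \eqref{equation: weilpairing} and comparing with the canonical pairing $\wedge^{n/2}W_\lambda(A)\otimes\wedge^{n/2}W_\lambda(A)\to\wedge^{n}W_\lambda(A)$, which affords $\delta_\lambda$, one reads off $\delta_\lambda=\varepsilon^{n/2}\chi_\ell^{n/2}$ from the trivial constituent of $\wedge^{n/2}W_\lambda(A)\otimes(\wedge^{n/2}W_\lambda(A))^\vee$, uniformly in the Albert type. Your final remark (triviality of $\varepsilon$ when $H$ is totally real) is correct and agrees with the paper.
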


\begin{proof}
By taking the $\wedge^{n/2}$ of \eqref{equation: weilpairing}, we obtain
\begin{equation}\label{equation: firstclue}
\wedge^ {n/2} W_\lambda(A)\simeq \big(\wedge^{n/2} W_\lambda(A)\big)^\vee(\varepsilon^{n/2}\cdot \chi_\ell^{n/2}).
\end{equation}
Taking the tensor product of the above isomorphism with $\wedge^{n/2} W_\lambda(A)$, shows that the action of $G_k$
on 
$$
\wedge^ {n/2} W_\lambda(A)\otimes \big(\wedge^{n/2} W_\lambda(A)\big)^\vee(\varepsilon^{n/2}\cdot \chi_\ell^{n/2})
$$
is by multiplication by $\delta_\lambda$. Since  $$\wedge^ {n/2} W_\lambda(A)\otimes \big(\wedge^{n/2} W_\lambda(A)\big)^\vee\simeq \End(\wedge^ {n/2} W_\lambda(A))$$ and $G_k$ acts trivially on the identity endomorphism of $\wedge^ {n/2} W_\lambda(A)$,  we find that $\delta_\lambda=\varepsilon^{n/2}\cdot \chi_\ell^{n/2}$. 
Finally, if $H$ is totally real, then $\varepsilon$ is trivial by definition.
\end{proof}

\begin{remark} In general it is not true that $\delta_\lambda$ is the product of a finite order character and a power of the cyclotomic character (see \cite[\S7 and Rmk. 17]{Fit24}).
\end{remark}

\begin{proposition}\label{prop: center over Qbar}
Suppose that $A$ is geometrically of the first kind. Then, for $\p \not \in S$, we have that
$$
\frac{\Tr(\Frob_\p| W_\lambda(A))^2}{\varepsilon(\Frob_\p)}\in F.
$$
\end{proposition}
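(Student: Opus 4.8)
The plan is to extract everything from the trace identities attached to the inner twists, so that the only geometric input is the behaviour of the $\alpha(s)$ under the Rosati involution. Write $t:=\Tr(\Frob_\p\mid W_\lambda(A))$; by Lemma~\ref{lemma: realVL} and Proposition~\ref{proposition: RibetCenter} this agrees with $\Tr(\varrho_\Ll(\Frob_\p))$ and is therefore the image in $H_\lambda$ of a canonical element of $H$, which I will also denote $t$ (if $t=0$ there is nothing to prove). The first step is the identity
\[
  \gamma(t)=\chi_{\gamma^{-1}}(\Frob_\p)\cdot t \qquad (\gamma\in\Gal(H/F)),
\]
obtained by taking the trace of $\Frob_\p$ in the isomorphism $W_\lambda(A)\simeq \chi_\gamma^{-1}\otimes W_{\gamma(\lambda)}(A)$ of Proposition~\ref{proposition: innertwistsWlambda}; here one must note that, under the identification $H_{\gamma(\lambda)}\simeq H_\lambda$ used to regard $W_{\gamma(\lambda)}(A)$ as an $H_\lambda$-vector space, the trace of $\Frob_\p$ on $W_{\gamma(\lambda)}(A)$ becomes $\gamma^{-1}(t)$. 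Specialising to $\gamma=c$ and recalling $\varepsilon=\chi_c^{-1}$ gives $c(t)=\varepsilon(\Frob_\p)^{-1}\, t$, whence
\[
  \frac{t^2}{\varepsilon(\Frob_\p)}= t\cdot c(t).
\]
This element is fixed by $c$, so it already lies in the subfield $H^{c}\subseteq H$ fixed by $c$ (all of $H$ when $H$ is totally real).

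Next I would show that $t\cdot c(t)$ is fixed by all of $\Gal(H/F)$, hence lies in $F$. Applying $\gamma\in\Gal(H/F)$ and using the displayed identity twice, together with the cocycle relation $\chi_{c\gamma^{-1}}=\chi_c\cdot{}^c\chi_{\gamma^{-1}}$ and the fact that $H/F$ is abelian (Proposition~\ref{prop:H/F abelian}), one computes
\[
  \gamma\big(t\cdot c(t)\big)=\chi_c(\Frob_\p)\cdot\chi_{\gamma^{-1}}(\Frob_\p)\cdot c\big(\chi_{\gamma^{-1}}(\Frob_\p)\big)\cdot t^2,
\]
while on the other hand $t\cdot c(t)=\chi_c(\Frob_\p)\cdot t^2$. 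Thus the claim reduces to the assertion that $\chi_\gamma(\Frob_\p)\cdot c\big(\chi_\gamma(\Frob_\p)\big)=1$ for every $\gamma\in\Gal(H/F)$ and every $\p\notin S$; equivalently, that each inner twist $\chi_\gamma$ takes values in the kernel of the norm $H^\times\to (H^{c})^\times$.

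The last point is the only one with genuine content, and it follows from the polarization. Fixing a polarization of $A$ defined over $k$ and letting $c$ also denote the Rosati involution on $H$ (which agrees with complex conjugation), the computation in the proof of Proposition~\ref{prop:H/F abelian} shows $\alpha(s)\cdot {}^c\alpha(s)\in F$ for all $s\in\Gal(K/k)$. Since $\gamma$ fixes $F$ and commutes with $c$,
\[
  \chi_\gamma(s)\cdot {}^c\chi_\gamma(s)=\frac{{}^\gamma\alpha(s)\cdot {}^{\gamma c}\alpha(s)}{\alpha(s)\cdot {}^c\alpha(s)}=\frac{{}^\gamma\!\big(\alpha(s)\cdot{}^c\alpha(s)\big)}{\alpha(s)\cdot {}^c\alpha(s)}=1,
\]
which completes the argument. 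I do not expect a real obstacle here: the work is entirely formal, and the one place that requires care is the identification $H_\lambda\simeq H_{\gamma(\lambda)}$ in the first step, so that the Galois twist of the Frobenius trace is tracked with the correct variance.
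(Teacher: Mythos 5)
Your argument is correct in substance, but it takes a genuinely different route from the paper's. The paper never isolates a scalar identity: it tensors the module-level isomorphism $W_\lambda(A)\simeq \chi_\gamma^{-1}\otimes W_{\lambda'}(A)$ of Proposition~\ref{proposition: innertwistsWlambda} with a complex-conjugated companion to obtain $\varepsilon^{-1}\otimes W_\lambda(A)^{\otimes 2}\simeq \varepsilon^{-1}\otimes W_{\lambda'}(A)^{\otimes 2}$ for all $\lambda'\mid \gl$, deduces that $\Tr(\Frob_\p\mid W_\lambda(A)^{\otimes2}\otimes\varepsilon^{-1})$ lies in $F_\gl$, and then lets $\gl$ run over almost all primes of $F$. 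You instead work scalar-wise at a single $\lambda$: you extract the trace identity $\gamma(t)=\chi_{\gamma^{-1}}(\Frob_\p)\,t$, reduce the proposition to the norm-one property $\chi_\gamma\cdot{}^{c}\chi_\gamma=1$ of the inner twists, and prove that property from the Rosati computation $\alpha(s)\cdot{}^{c}\alpha(s)\in F$ contained in the proof of Proposition~\ref{prop:H/F abelian}. This is a worthwhile clarification: if one conjugates the paper's first isomorphism carefully one finds the character ${}^{c}\chi_\gamma^{-1}$ where the paper writes $\chi_\gamma$, and it is exactly your norm-one identity that reconciles the two; so your lemma makes explicit something the paper's proof uses tacitly. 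Your computation is also robust to the $\gamma$ versus $\gamma^{-1}$ convention, since you prove the norm-one identity for every $\gamma\in\Gal(H/F)$.

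The only caveat is the one you flag yourself: at a fixed $\lambda$ the identity $\gamma(t)=\chi_{\gamma^{-1}}(\Frob_\p)\,t$ is only as precise as the identifications $H_{\gamma(\lambda)}\simeq H_\lambda$ implicit in Proposition~\ref{proposition: innertwistsWlambda}. In particular, for $\gamma$ in the decomposition group of $\lambda$ the statement can only be read with $W_{\gamma(\lambda)}(A)=W_\lambda(A)$ regarded as an $H_\lambda$-space through the continuous extension of $\gamma$ itself; with the identity identification, Lemma~\ref{lemma: unique} would force $\chi_\gamma$, and in particular $\varepsilon$, to be trivial whenever $c$ lies in some decomposition group, contradicting Example~\ref{ex:curve}. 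If you prefer not to unwind these identifications, note that $t^2/\varepsilon(\Frob_\p)$ is an element of $H$ independent of the choice of $\lambda$ (the traces form an $H$-rational compatible system), so you may run your argument at a prime $\gl$ of $F$ that splits completely in $H$ and all of whose divisors lie in $\Sigma_A$; infinitely many such $\gl$ exist by Corollary~\ref{corollary:schur index first kind} and the Chebotaryov density theorem, the decomposition groups are then trivial, and this plays exactly the role of the paper's final step of varying $\gl$.
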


\begin{proof}
By Proposition \ref{proposition: innertwistsWlambda}, for every $\gamma\in \Gal(H/F)$ we have an isomorphism
$$
W_{\lambda}(A) \simeq \chi_{\gamma}^{-1} \otimes {}^\gamma W_{\lambda}(A)
$$
of $H_\lambda[G_k]$-modules. Taking the complex conjugate of the above isomorphism and applying Proposition \ref{proposition: innertwistsWlambda} again, we obtain
$$
\varepsilon^{-1}\otimes W_{\lambda}(A) \simeq \chi_{\gamma}\cdot {}^\gamma\varepsilon^{-1} \otimes {}^\gamma W_{\lambda}(A).
$$
Taking the tensor product of the two previous isomorphisms yields
$$
\varepsilon^{-1}\otimes W_{\lambda}(A)^{\otimes 2} \simeq  {}^\gamma\varepsilon^{-1} \otimes {}^\gamma W_{\lambda}(A)^{\otimes 2}.
$$
This implies that $\Tr(\Frob_\p|W_{\lambda}(A)^{\otimes 2}\otimes \varepsilon^{-1})\in F_\gl$ for $\p \not \in S$. The proposition follows from the fact that the previous relation is valid for all primes $\gl$ of $F$ outside a finite set.

%Hence
%$$
%W_{\lambda'}(A)^{\otimes n}\otimes \delta_{\lambda'}^{-1}\simeq W_{\lambda}(A)^{\otimes n}\otimes \delta_{\lambda}^{-1}.
%$$
%This implies that $\Tr(\Frob_\p|W_{\lambda}(A)^{\otimes n}\otimes \delta_{\lambda}^{-1})\in F_\gl$ for $\p \not \in S$. The proposition follows from the fact that the previous equality is valid for all primes $\gl$ of $F$ outside a finite set and Proposition \ref{proposition: characterChi}.

%Let $a_\p$ denote $\Tr(\Frob_\p| W_\lambda(A))$. By Proposition \ref{proposition: innertwistsWlambda}, we have that $a_\p=\varepsilon(\p)\cdot \overline a_\p$. Hence
%$$
%\frac{\overline a_\p^2}{\overline \varepsilon(\p)}=\frac{ a_\p^2}{\varepsilon(\p)^2\overline \varepsilon(\p)}=\frac{ a_\p^2}{ \varepsilon(\p)}
%$$
\end{proof}

\section{Symplectic and orthogonal representations}\label{section: symplective vs orthogonal}

We keep all the notations of \S\ref{section: lambdaadic}, which we briefly recall. We denote by $A$ an abelian variety defined over $k$ which is genuinely of $\GL_n$-type and geometrically of the first kind. Let $K/k$ be the minimal extension such that $\End(A_K) = \End(A_{\bar k})$. We then have that $A_K\sim B^r$, where $B$ over $K$ is the associated building block. Recall that we denote by $H$ the center of $\End^0(A)$ and by $F$ the center of $\End^0(B)$.

The goal of this section is to determine the alternating or symmetric nature of the pairing $\psi_\lambda$ defined in Proposition \ref{proposition: pairing}. We let $\Sigma_A$ be the set of primes of $H$ at which $\End^0(A)$ splits, and $\Sigma_B$ the set of primes of $F$ at which $\End^0(B)$ splits. We begin by noting the following relation.

\begin{lemma}\label{lemma:gl lambda in Sigma}
	Let $\lambda$ be a prime of $H$ lying over a prime $\gl\in \Sigma_B$. Then $\lambda\in\Sigma_A$.
\end{lemma}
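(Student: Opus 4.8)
The plan is to deduce this directly from the identification of Brauer classes in Proposition~\ref{prop:genuinely GLn Brauer classes}. Recall that, by definition, $\lambda\in\Sigma_A$ means precisely that the class $[\End^0(A)]$ becomes trivial in $\Br(H_\lambda)$, and likewise $\gl\in\Sigma_B$ means that $[\End^0(B)]$ is trivial in $\Br(F_\gl)$. By Lemma~\ref{lemma: center-center-inclusion} we have $F\subseteq H$, and Proposition~\ref{prop:genuinely GLn Brauer classes} gives $[\End^0(A)]=[\End^0(B)\otimes_F H]$ in $\Br(H)$. So the first step is to apply the restriction map $\Br(H)\to\Br(H_\lambda)$ and use functoriality of base change of central simple algebras along $F\subseteq H\subseteq H_\lambda$: the image of $[\End^0(A)]$ in $\Br(H_\lambda)$ equals the image of $[\End^0(B)]$ under the composite $\Br(F)\to\Br(H)\to\Br(H_\lambda)$.

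Next I would observe that, since $\lambda$ lies over $\gl$, the completion $H_\lambda$ contains $F_\gl$, so the map $\Br(F)\to\Br(H_\lambda)$ factors as $\Br(F)\to\Br(F_\gl)\to\Br(H_\lambda)$. Because $\gl\in\Sigma_B$, the class $[\End^0(B)]$ already vanishes in $\Br(F_\gl)$; hence it vanishes in $\Br(H_\lambda)$. Combining with the previous paragraph, $[\End^0(A)]$ vanishes in $\Br(H_\lambda)$, which is exactly the assertion that $\lambda\in\Sigma_A$. Concretely, if $\End^0(B)\otimes_F F_\gl\simeq \M_m(F_\gl)$, then $\End^0(B)\otimes_F H_\lambda\simeq \M_m(F_\gl)\otimes_{F_\gl}H_\lambda\simeq \M_m(H_\lambda)$, so $\End^0(A)\otimes_H H_\lambda$ lies in the trivial Brauer class.

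I do not expect a genuine obstacle here: the whole argument is a short diagram chase in Brauer groups. The only point requiring (routine) care is the compatibility of the restriction maps $\Br(F)\to\Br(F_\gl)\to\Br(H_\lambda)$ with $\Br(F)\to\Br(H)\to\Br(H_\lambda)$, which is the standard functoriality of base change of Brauer classes; and the fact that Proposition~\ref{prop:genuinely GLn Brauer classes} applies, which it does since $A$ is genuinely of $\GL_n$-type with associated building block $B$.
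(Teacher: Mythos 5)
Your argument is correct and is essentially the paper's own proof: both rest on Proposition~\ref{prop:genuinely GLn Brauer classes} together with the base-change compatibility $(\End^0(B)\otimes_F F_\gl)\otimes_{F_\gl} H_\lambda \simeq (\End^0(B)\otimes_F H)\otimes_H H_\lambda$, which transfers splitness at $\gl$ to splitness at $\lambda$. No issues to flag.
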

\begin{proof}
	From Proposition~\ref{prop:genuinely GLn Brauer classes} we have the equality of Brauer classes
	\(
		[\End^0(A)] = [\End^0(B)\otimes_F H]
	\).
	From this we see that, if $\End^0(B)$ splits at $\gl$, then the isomorphism 
	\[
	(\End^0(B)\otimes_F F_\gl) \otimes_{F_\gl} H_\lambda \simeq (\End^0(B) \otimes_F H)\otimes_H H_\lambda
	\]
	implies that $\End^0(A)$ splits at $\lambda$.
\end{proof}

Fix a prime $\gl\in\Sigma_B$ and let $\lambda\in \Sigma_A$ be a prime of $H$ over $\gl$. Recall from \S\ref{section: lambdaadic} the modules $V_\lambda(A)$ and $V_\gl(A)$. They are related by means of
\begin{equation}\label{eq:Vgl ResVlambda}
	V_\gl(A) \simeq 
	\bigoplus_{\lambda'\mid \gl} \Res_{H_\lambda/F_\gl}V_{\lambda'}(A).
\end{equation}

Recall that $V_\lambda(A)$ has an absolutely irreducible $H_\lambda[G_k]$-submodule $W_\lambda(A)$ such that $V_\lambda(A) \simeq W_\lambda^{\oplus t_A}$. We denote by $\varrho_\lambda:G_k\to \Aut(W_\lambda(A))$ the Galois representation corresponding to the action of $G_k$ on $W_\lambda(A)$. 
Since $A_K\sim B^r$, we have $V_\ell(A)\simeq V_\ell(B)^{\oplus r}$ as $\QQ_\ell[G_K]$-modules. We want to obtain properties of  the modules $W_\lambda(A)$ in terms of the geometry of $B$. For that, we first let $V_\gl(B)$ denote $ V_\ell(B)\otimes_{F\otimes\QQ_\ell} F_\gl$ taken with respect to  the projection from $F\otimes \Q_\ell$ to $F_\gl$. We have the isomorphism of $F_\gl[G_K]$-modules
\begin{equation}\label{eq:Vgl(A) Vgl(B)^r}
	V_\gl(A) \simeq V_\gl(B)^{\oplus r}.
\end{equation}

The main result of this section builds results by Chi, Banaszak, Gajda, and Krasoń. Before stating them, let us recall some standard definitions.
Let $L$ be a field of characteristic 0 and let $V$ be an $L$-vector space of dimension $m\geq 1$. Let $\psi:V\times V\to L$ be an alternating pairing (resp. a symmetric pairing). Recall that if $\psi$ is alternating, then $m$ is necessarily even. The \emph{general symplectic group $\GSp_{m}(L)$} (resp. \emph{general orthogonal group $\GO_{m}(L)$}) with respect to $\psi$ is the group made of those $M\in \GL_{m}(L)$ such that
\[
	\psi(Mv,Mw)=\nu(M)\cdot \psi(v,w)
\]
for some $\nu(M)\in L^\times$ and all $v,w\in V$. 
The quantity $\nu(M)\in L^\times$ is called the \emph{similitude factor} of $M$, and the map $\nu$ associating $\nu(M)$ to $M$ is called the \emph{similitude character}.

\begin{theorem}[Chi, Banaszak--Gajda--Krasoń]\label{theorem:C-BGK types I II}
	Suppose $\End^0(B)$ has Albert type $\I$, $\II$, or $\III$ with Schur index $t_B=1$, $2$, or $2$, respectively. For every $\gl\in \Sigma_B$, there are an absolutely irreducible $F_\gl[G_K]$-module $W_\gl(B)$ such that $V_\gl(B) \simeq W_\gl(B)^{\oplus t_B}$ and an $F_\gl$-bilinear, non-degenerate, $G_K$-equivariant pairing
	\[
		\psi_\gl:W_\gl(B)\times W_\gl(B) \to F_\gl(\chi_\ell|_{G_K}),
	\]
 which is alternating if the Albert type is $\I$ or $\II$, and symmetric if it is $\III$.	
\end{theorem}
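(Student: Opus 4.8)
The plan is to follow the approach of Chi and of Banaszak--Gajda--Kraso\'n (\cite{Chi90}, \cite{BGK06}, \cite{BGK10}). I would first fix a polarization of $B$ defined over $K$, denote by $\dagger$ the induced Rosati involution on $\End^0(B)$, and take the associated Weil pairing
\[
	e\colon V_\ell(B)\times V_\ell(B)\longrightarrow \Q_\ell(\chi_\ell|_{G_K}),
\]
which is $\Q_\ell$-bilinear, non-degenerate, alternating, $G_K$-equivariant, and satisfies $e(\alpha x,y)=e(x,\alpha^\dagger y)$ for all $\alpha\in\End^0(B)\otimes\Q_\ell$. Since $\End^0(B)$ has Albert type $\I$, $\II$ or $\III$, the field $F$ is totally real and $\dagger$ acts trivially on $F$; applying $e(\alpha x,y)=e(x,\alpha^\dagger y)$ to the idempotents cutting out the factors of $F\otimes\Q_\ell\simeq\prod_{\gl\mid\ell}F_\gl$ then forces $e$ to vanish on $V_\gl(B)\times V_{\gl'}(B)$ whenever $\gl\neq\gl'$. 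Hence $e$ is the orthogonal sum of pairings $e_\gl\colon V_\gl(B)\times V_\gl(B)\to\Q_\ell(\chi_\ell|_{G_K})$, each balanced over $F_\gl$ in the sense that $e_\gl(ax,y)=e_\gl(x,ay)$ for $a\in F_\gl$.

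Next I would descend each $e_\gl$. Since the trace form of the separable extension $F_\gl/\Q_\ell$ is non-degenerate, there is a unique $F_\gl$-bilinear pairing $P_\gl\colon V_\gl(B)\times V_\gl(B)\to F_\gl(\chi_\ell|_{G_K})$ with $e_\gl=\Tr_{F_\gl/\Q_\ell}\circ P_\gl$, and by uniqueness $P_\gl$ inherits non-degeneracy, $G_K$-equivariance, alternation, and the relation $P_\gl(\alpha x,y)=P_\gl(x,\alpha^\dagger y)$ from $e_\gl$. Now fix $\gl\in\Sigma_B$, so that $\End^0(B)\otimes_F F_\gl\simeq\M_{t_B}(F_\gl)$; I would fix such an isomorphism, write $e_{ij}$ for the matrix units, and set $W_\gl(B):=e_{11}V_\gl(B)$. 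Summing the images of the $e_{i1}$ exhibits an isomorphism $V_\gl(B)\simeq W_\gl(B)^{\oplus t_B}$, and a dimension count gives $\dim_{F_\gl}W_\gl(B)=2\dim(B)/(t_B[F:\Q])=n$. Absolute irreducibility of $W_\gl(B)$ follows from Faltings' theorem \cite[Satz~4]{Fal83}, which gives $\End_{F_\gl[G_K]}(V_\gl(B))\simeq\End^0(B)\otimes_F F_\gl\simeq\M_{t_B}(F_\gl)$; hence $\End_{F_\gl[G_K]}(W_\gl(B))=F_\gl$, and since $V_\ell(B)$ is semisimple (again by Faltings), $W_\gl(B)$ is semisimple, whence absolutely irreducible.

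It then remains to produce $\psi_\gl$ on $W_\gl(B)$ and to identify its symmetry from the interplay of the $e_{ij}$ with $\dagger$:
\begin{enumerate}[i)]
\item If $B$ has type $\I$, then $t_B=1$, $W_\gl(B)=V_\gl(B)$, and $\psi_\gl:=P_\gl$ is alternating.
\item If $B$ has type $\II$, the involution $\dagger$ is of orthogonal type, so I would choose the matrix units so that $e_{11}^\dagger=e_{11}$. Then $P_\gl(e_{11}V_\gl(B),e_{22}V_\gl(B))=P_\gl(V_\gl(B),e_{11}e_{22}V_\gl(B))=0$, so $W_\gl(B)$ is $P_\gl$-orthogonal to $e_{22}V_\gl(B)$, and $\psi_\gl:=P_\gl|_{W_\gl(B)}$ is $F_\gl$-bilinear, non-degenerate, $G_K$-equivariant, and alternating.
\item If $B$ has type $\III$, the involution $\dagger$ is symplectic, so I would choose the matrix units so that $e_{11}^\dagger=e_{22}$ and $e_{21}^\dagger=-e_{21}$. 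Then $P_\gl(e_{11}V_\gl(B),e_{11}V_\gl(B))=P_\gl(V_\gl(B),e_{22}e_{11}V_\gl(B))=0$, so $W_\gl(B)$ is $P_\gl$-isotropic and $P_\gl$ pairs it perfectly with $e_{22}V_\gl(B)$. Setting $\psi_\gl(w,w'):=P_\gl(w,e_{21}w')$ gives an $F_\gl$-bilinear, non-degenerate, $G_K$-equivariant pairing on $W_\gl(B)$, and
\[
	\psi_\gl(w,w')=-P_\gl(e_{21}w',w)=-P_\gl(w',e_{21}^\dagger w)=P_\gl(w',e_{21}w)=\psi_\gl(w',w),
\]
so $\psi_\gl$ is symmetric.
\end{enumerate}

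I expect the last step to be the main obstacle: it requires arranging the splitting isomorphism $\End^0(B)\otimes_F F_\gl\simeq\M_{t_B}(F_\gl)$ so that $\dagger$ takes the standard orthogonal or symplectic normal form dictated by the Albert type, and checking that the passage from $e_\gl$ to $P_\gl$ is compatible with $\dagger$. This is precisely the place where the alternating-versus-symmetric dichotomy is produced, and all of it is worked out in detail in \cite{Chi90}, \cite{BGK06}, \cite{BGK10}.
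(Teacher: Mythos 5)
Your sketch is correct, and it reconstructs the standard argument behind the sources this theorem is attributed to: the Weil pairing of a $K$-rational polarization, its $F$-balancedness (the Rosati involution is trivial on the totally real center), the trace-form descent to an $F_\gl$-bilinear pairing $P_\gl$, and the analysis of splitting idempotents against a Rosati involution of orthogonal type (type $\II$) or the canonical symplectic involution (type $\III$), which is exactly where the alternating/symmetric dichotomy comes from. Note, however, that the paper itself gives no proof of this statement — it is quoted as a known result, with type $\II$ from \cite[Theorem~A]{Chi90}, types $\I$ and $\II$ from \cite[\S4 and \S5]{BGK06}, and type $\III$ from \cite[Theorem~3.23]{BGK10} — so your outline supplies (in sketch form) precisely what the paper delegates to those references, and the one step you defer, choosing matrix units adapted to the involution, is the normal-form computation carried out there.
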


The above result for an abelian variety of Albert type $\II$ appeared first in \cite[Theorem~A]{Chi90}. A unified treatment for abelian varieties of Albert type $\I$ and $\II$ was given in \cite[\S4 and \S5]{BGK06}. The case of Albert type $\III$ corresponds to \cite[Theorem~3.23]{BGK10}. Note that both \cite{BGK06} and \cite{BGK10} attain results in the more subtle situation of integral Tate modules.

\begin{remark}
We will denote by $\varrho_\gl$ the $\gl$-adic representation associated to the action of $G_K$ on $W_\gl(B)$. Theorem~\ref{theorem:C-BGK types I II} implies that the image of $\varrho_\gl$ is contained in $\GSp_n(F_\gl)$ when $B$ has Albert type $\I$ or $\II$, and in $\GO_n(F_\gl)$ when the Albert type is $\III$. Moreover, we see that the similitude character $\nu$ satisfies $\nu\circ \varrho_\gl=\chi_\ell|_{G_K}$.
\end{remark}

Recall from Lemma~\ref{lemma: VlambdaGKmodule} that the Frobenius traces of the $H_\lambda[G_K]$-module $W_\lambda(A)$ belong to $F$. We now show that in fact $W_\gl(B)$ is a realization of $W_\lambda(A)$ over $F_\gl$ as a $G_K$-module, in other words, the restriction of $\varrho_\lambda$ to $G_K$ can be realized over $F_\gl$.

\begin{proposition}\label{prop:Wl(B) x H is Wl(A)}
	For every $\gl\in\Sigma_B$ and $\lambda\in \Sigma_A$ lying over $\gl$, there is an isomorphism 
	\[
		W_\lambda(A) \simeq W_\gl(B)\otimes_{F_\gl} H_\lambda
	\]
of $H_\lambda[G_K]$-modules.
\end{proposition}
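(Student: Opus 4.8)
The plan is to show that $W_\gl(B)\otimes_{F_\gl}H_\lambda$ and $W_\lambda(A)$ are isomorphic as $H_\lambda[G_K]$-modules by comparing traces of Frobenius and invoking absolute irreducibility. First I would recall that $V_\ell(A)\simeq V_\ell(B)^{\oplus r}$ as $\QQ_\ell[G_K]$-modules, which after tensoring appropriately gives $V_\gl(A)\simeq V_\gl(B)^{\oplus r}$ as in \eqref{eq:Vgl(A) Vgl(B)^r}; combining this with \eqref{eq:Vgl ResVlambda} and with the decomposition $V_\gl(B)\simeq W_\gl(B)^{\oplus t_B}$ from Theorem~\ref{theorem:C-BGK types I II} one obtains
\[
	\bigoplus_{\lambda'\mid \gl}\Res_{H_\lambda/F_\gl}V_{\lambda'}(A)\simeq W_\gl(B)^{\oplus rt_B}
\]
as $F_\gl[G_K]$-modules. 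By Lemma~\ref{lemma: VlambdaGKmodule} the various $V_{\lambda'}(A)$ for $\lambda'\mid\gl$ are all isomorphic as $G_K$-modules, and by construction $V_\lambda(A)\simeq W_\lambda(A)^{\oplus t_A}$; since the number of primes $\lambda'$ over $\gl$ is $[H:F]$, the left-hand side is $\Res_{H_\lambda/F_\gl}W_\lambda(A)^{\oplus t_A[H:F]}$.

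Next I would compare Frobenius traces. For $\p\not\in S$ we have $\Tr(\Frob_\p\mid V_\gl(B))=\Tr(\Frob_\p\mid V_\gl(A))/r = \Tr(\Frob_\p\mid V_\ell(A))/r$, which via \eqref{equation: relVlVL} and the first part of Proposition~\ref{proposition: RibetCenter} is $t_B\cdot\Tr(\Frob_\p\mid W_\lambda(A))$ up to the appropriate normalization; dividing by $t_B$ we get $\Tr(\Frob_\p\mid W_\gl(B)) = \Tr(\Frob_\p\mid W_\lambda(A))$, an equality which makes sense because, by Lemma~\ref{lemma: VlambdaGKmodule}, the trace of $W_\lambda(A)$ lies in $F$ (so it lies in $F_\gl$). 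Therefore $W_\gl(B)\otimes_{F_\gl}H_\lambda$ and $W_\lambda(A)$ have the same Frobenius traces over $H_\lambda$, hence the same trace function on $G_K$ by the Chebotarev density theorem. Since $W_\lambda(A)$ is absolutely irreducible as an $H_\lambda[G_K]$-module by Lemma~\ref{lemma: realVL}, and $W_\gl(B)$ is absolutely irreducible over $F_\gl$ by Theorem~\ref{theorem:C-BGK types I II} (so $W_\gl(B)\otimes_{F_\gl}H_\lambda$ is absolutely irreducible over $H_\lambda$, as $H_\lambda/F_\gl$ is separable and one may pass to a common algebraic closure), equality of trace functions forces $W_\lambda(A)\simeq W_\gl(B)\otimes_{F_\gl}H_\lambda$ as $H_\lambda[G_K]$-modules.

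The main obstacle I anticipate is the bookkeeping with the multiplicities $r$, $t_A$, $t_B$ and $[H:F]$: one must be careful that the counting identities are consistent (indeed $t_B\mid n$, $t_A\mid t_B$, and the dimension count $\dim_{F_\gl}V_\gl(B)=n t_B$ matches $\dim_{H_\lambda}W_\lambda(A)\cdot[H:F]\cdot t_A/\text{(something)}$), and that the isomorphism $\Res_{H_\lambda/F_\gl}(W_\gl(B)\otimes_{F_\gl}H_\lambda)\simeq W_\gl(B)^{\oplus[H:F]}$ is used correctly when comparing with the $F_\gl[G_K]$-level decomposition. Once the trace computation is pinned down, the conclusion is immediate from absolute irreducibility; the only subtlety on that front is justifying that base change along the finite separable extension $H_\lambda/F_\gl$ preserves absolute irreducibility, which is standard.
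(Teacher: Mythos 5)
Your overall strategy---compare Frobenius traces of $W_\gl(B)$ and $W_\lambda(A)$, use Lemma \ref{lemma: VlambdaGKmodule} to place the latter traces in $F$, and conclude by Chebotarev together with semisimplicity/absolute irreducibility (base change along $H_\lambda/F_\gl$ indeed preserves absolute irreducibility)---is the paper's strategy. The problem is that the trace computation, which is the entire content of the proposition, is left at ``up to the appropriate normalization,'' and the intermediate identities you do write are not correct. First, $\Tr(\Frob_\p\mid V_\gl(A))\neq\Tr(\Frob_\p\mid V_\ell(A))$: the left-hand side lies in $F_\gl$, while $V_\ell(A)\simeq\bigoplus_{\gl'\mid\ell}\Res_{F_{\gl'}/\Q_\ell}V_{\gl'}(A)$, so the right-hand side is a sum of field traces over all primes above $\ell$; the correct starting point is \eqref{eq:Vgl ResVlambda} together with \eqref{eq:Vgl(A) Vgl(B)^r}, not \eqref{equation: relVlVL}. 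Second, the number of primes $\lambda'\mid\gl$ is $[H:F]/[H_\lambda:F_\gl]$, not $[H:F]$ (these agree only if $\gl$ splits completely in $H$); what restores the count is that $\Tr(\Frob_\Pg\mid\Res_{H_\lambda/F_\gl}W_{\lambda'}(A))=[H_\lambda:F_\gl]\cdot\Tr(\Frob_\Pg\mid W_{\lambda'}(A))$, valid precisely because this trace lies in $F_\gl$ by Lemma \ref{lemma: VlambdaGKmodule}, and the local degree recombines with the number of primes to give $[H:F]$.

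Carrying this out, the comparison reads $rt_B\,\Tr(\Frob_\Pg\mid W_\gl(B)) = t_A\,[H:F]\,\Tr(\Frob_\Pg\mid W_\lambda(A))$ for $\Pg\notin S_K$, and the proposition holds because the two multiplicities coincide: $rt_B=[E:F]$ and $t_A[H:F]=[E:H][H:F]=[E:F]$, both identities resting on $E$ being a maximal subfield of both $\End^0(A)$ and $\End^0(A_\kbar)$, i.e.\ on $A$ being genuinely of $\GL_n$-type. This is exactly the ``bookkeeping'' you flag as the anticipated obstacle; it is not routine, but the point where the genuineness hypothesis enters, and without verifying that the proportionality constant is $1$ (rather than merely that the two trace functions are proportional) the claimed isomorphism does not follow. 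Once that identity is in place, your Chebotarev-plus-irreducibility conclusion goes through exactly as in the paper.
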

\begin{proof}
	Let $S_K$ be as in \S\ref{section: lambdaadic}, and let $\Pg\not\in S_K$. By the isomorphisms \eqref{eq:Vgl ResVlambda}, 
	$V_\gl(A)\simeq V_\gl(B)^{\oplus r}\simeq W_\gl(B)^{\oplus rt_B}$,
and $V_\lambda(A)\simeq W_\lambda(A)^{\oplus t_A}$, we have
	\begin{align*}
		rt_B \Tr(\Frob_\Pg\mid W_\gl(B)) 
		&= \sum_{\lambda'\mid\gl}\Tr(\Frob_\Pg\mid \Res_{H_\lambda/F_\gl}V_{\lambda'}(A))\\
		&= t_A \sum_{\lambda'\mid\gl}\Tr(\Frob_\Pg\mid \Res_{H_\lambda/F_\gl}W_{\lambda'}(A)).
	\end{align*}
	By Lemma~\ref{lemma: VlambdaGKmodule}, we have $\Tr(\Frob_\Pg\mid W_{\lambda'}(A))\in F$ and $W_{\lambda}(A)\simeq W_{\lambda'}(A)$ for all $\lambda'\mid\gl$, and therefore the above is equal to
	\[
	t_A[H_\lambda:F_\gl]\sum_{\lambda'\mid\gl}\Tr(\Frob_\Pg\mid W_{\lambda'}(A))
		= t_A[H:F]\Tr(\Frob_\Pg\mid W_{\lambda}(A)).
	\]
	Since $rt_B = [E:F]$ and $t_A=[E:H]$, we have $t_A[H:F]=rt_B$. It follows that $\Tr(\Frob_\Pg\mid W_\gl(B)) = \Tr(\Frob_\Pg\mid W_{\lambda}(A))$ for every $\Pg\not\in S_K$, and we obtain the result by applying the Chebotaryov density theorem.
\end{proof}

The following result completes the proof of the \hyperref[thm:main]{Main Theorem} of the introduction.

\begin{theorem}\label{theorem:equivariant pairing}
For every $\lambda\in \Sigma_A$ lying over a prime $\gl\in \Sigma_B$, the $H_\lambda$-bilinear, non-degenerate, $G_k$-equivariant pairing
	\[
		\psi_\lambda:W_\lambda (A)\times W_\lambda (A) \to H_\lambda(\varepsilon \chi_\ell),
	\]
from Proposition \ref{proposition: pairing} is alternating if $B$ has Albert type $\I$ or $\II$, and symmetric if $B$ has Albert type $\III$. 
\end{theorem}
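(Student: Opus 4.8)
The strategy is to transport the symplectic/orthogonal nature of the $G_K$-module $W_\gl(B)$, supplied by Theorem~\ref{theorem:C-BGK types I II}, to the $G_k$-module $W_\lambda(A)$ via the realization isomorphism $W_\lambda(A)\simeq W_\gl(B)\otimes_{F_\gl}H_\lambda$ of Proposition~\ref{prop:Wl(B) x H is Wl(A)}, and then use the uniqueness of the pairing (up to scalars) from Proposition~\ref{proposition: pairing}, together with Schur's lemma, to upgrade $G_K$-equivariance to $G_k$-equivariance without changing the type. Concretely, I would first fix a prime $\gl\in\Sigma_B$ and $\lambda\in\Sigma_A$ over it; by Lemma~\ref{lemma:gl lambda in Sigma} this is consistent. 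Extending scalars from $F_\gl$ to $H_\lambda$ in the pairing $\psi_\gl$ of Theorem~\ref{theorem:C-BGK types I II} and using Proposition~\ref{prop:Wl(B) x H is Wl(A)}, one obtains an $H_\lambda$-bilinear, non-degenerate, $G_K$-equivariant pairing
\[
\psi_\lambda':W_\lambda(A)\times W_\lambda(A)\to H_\lambda(\chi_\ell|_{G_K})
\]
which is alternating if $B$ has Albert type $\I$ or $\II$ and symmetric if $B$ has Albert type $\III$; note that $\varepsilon$ restricted to $G_K$ is trivial since $K/k$ is where all endomorphisms become defined, so the target agrees with that of $\psi_\lambda|_{G_K}$ after untwisting.

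Next I would compare $\psi_\lambda'$ with the restriction to $G_K$ of the pairing $\psi_\lambda$ from Proposition~\ref{proposition: pairing}. Since $W_\lambda(A)$ is absolutely irreducible as an $H_\lambda[G_K]$-module (Lemma~\ref{lemma: realVL}), the space of $G_K$-equivariant pairings $W_\lambda(A)\times W_\lambda(A)\to H_\lambda(\chi_\ell|_{G_K})$ is one-dimensional over $H_\lambda$ by the Schur-lemma argument recalled in the proof of Proposition~\ref{proposition: pairing}: it is identified with $\Hom_{G_K}(W_\lambda(A),W_\lambda(A)^\vee(\chi_\ell))$, which is one-dimensional. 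Hence $\psi_\lambda|_{G_K}$ and $\psi_\lambda'$ differ by a scalar in $H_\lambda^\times$, and in particular they have the same symmetry type: $\psi_\lambda$ is alternating if $B$ has Albert type $\I$ or $\II$, and symmetric if $B$ has Albert type $\III$. Since $\psi_\lambda$ is already $G_k$-equivariant by construction (Proposition~\ref{proposition: pairing}), this finishes the argument; the passage through $G_K$ is only used to read off the symmetry type from the Banaszak--Gajda--Kraso\'n pairing.

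The one point that needs care — and which I expect to be the main obstacle — is the compatibility of all the various normalizations: one must check that the target $H_\lambda(\chi_\ell|_{G_K})$ obtained by base-changing $\psi_\gl$ really matches the restriction to $G_K$ of the target $H_\lambda(\varepsilon\chi_\ell)$ of $\psi_\lambda$, which relies on $\varepsilon|_{G_K}$ being trivial (equivalently, that $\varepsilon$ factors through $\Gal(K/k)$, as in the construction in \S\ref{section: innertwists}) and on the fact that the similitude character of $\varrho_\gl$ is exactly $\chi_\ell|_{G_K}$ (the Remark following Theorem~\ref{theorem:C-BGK types I II}). Once these identifications are in place, the absolute irreducibility of $W_\lambda(A)$ over $G_K$ does all the remaining work, since the symmetry type of a non-degenerate equivariant bilinear form on an absolutely irreducible module is intrinsic and therefore insensitive to scaling, to base change along $F_\gl\subseteq H_\lambda$, and to the choice between $\psi_\lambda$ and $\psi_\lambda'$.
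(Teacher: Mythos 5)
Your proof is correct and follows essentially the same route as the paper: transport the Chi--Banaszak--Gajda--Kraso\'n pairing $\psi_\gl$ to $W_\lambda(A)$ via Proposition~\ref{prop:Wl(B) x H is Wl(A)}, and use absolute irreducibility of $W_\lambda(A)$ over $G_K$ (Lemma~\ref{lemma: realVL}) plus Schur's lemma to identify it, up to an $H_\lambda^\times$-scalar, with $\psi_\lambda$, whence the symmetry type is inherited. The only cosmetic difference is that you compare the two pairings after restricting $\psi_\lambda$ to $G_K$ (where $\varepsilon$ is trivial), whereas the paper instead upgrades the transported pairing to $G_k$-equivariance using \eqref{equation: weilpairing} and then invokes the uniqueness in Proposition~\ref{proposition: pairing}; both hinge on the same one-dimensionality of $\Hom_{G_K}(W_\lambda(A),W_\lambda(A)^\vee(\chi_\ell))$.
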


\begin{proof}
By virtue of Proposition~\ref{prop:Wl(B) x H is Wl(A)}, we can define
\begin{equation}\label{equation: extension pairing}
		\tilde \psi_\lambda: W_\lambda(A)\times W_\lambda(A)\to H_\lambda(\chi_\ell|_{G_K}),\qquad \tilde\psi_\lambda(v\otimes a,w\otimes b) := ab\cdot \psi_\gl(v,w)
\end{equation}
for $v,w\in W_\gl(B)$ and $a,b\in H_\lambda$. By Theorem \ref{theorem:C-BGK types I II}, $\tilde \psi_\lambda$ is an $H_\lambda$-bilinear, non-degenerate, $G_K$-equivariant pairing, which is alternating if $B$ has Albert type $\I$ or $\II$, and symmetric if $B$ has Albert type $\III$. Let
$$
\tilde \Psi_\lambda:W_\lambda(A)\stackrel{\sim}{\longrightarrow}W_\lambda(A)^\vee (\chi_\ell|_{G_K})
$$
be the $G_K$-equivariant isomorphism associated to $\tilde\psi_\lambda$ as in the discussion at the beginning of the proof of Proposition \ref{proposition: pairing}.

Consider also the isomorphism
\[
\Psi_\lambda\colon W_\lambda (A)\longrightarrow W_\lambda(A)^\vee(\varepsilon \chi_\ell)
  \]
  associated to the $G_k$-equivariant pairing from Proposition \ref{proposition: pairing}. By Corollary \ref{cor:cut out}, we have that $\varepsilon_{|G_K}$ is trivial, which implies that $\Psi_\lambda^{-1}\circ \tilde \Psi_\lambda  $ is a $G_K$-equivariant automorphism of $W_\lambda(A)$.  Since $W_\lambda(A)$ is irreducible as an $H_\lambda[G_K]$-module by Lemma \ref{lemma: realVL}, Schur's lemma implies that $\Psi_\lambda=c\cdot \tilde{\Psi}_\lambda$ for some $c\in H_\lambda^\times$, hence $\psi_\lambda$ inherits the alternating or symmetric type from $\tilde{\psi}_\lambda$.

% By Equation \eqref{equation: weilpairing}, Schur's lemma, and the absolute irreducibility of $W_\lambda(A)$ from Lemma \ref{lemma: realVL}, the isomorphism
% $$
% \tilde \Psi_\lambda:W_\lambda(A)\stackrel{\sim}{\longrightarrow}W_\lambda(A)^\vee (\varepsilon\cdot \chi_\ell)
% $$
% is $G_k$-equivariant. Therefore, $$\tilde \psi_\lambda:W_\lambda(A)\times W_\lambda(A)\stackrel{\sim}{\longrightarrow}W_\lambda(A)^\vee (\varepsilon\cdot\chi_\ell) $$ is $G_k$-equivariant. 
% By Proposition \ref{proposition: pairing}, the two pairings $\tilde \psi_\lambda$ and $\psi_\lambda$ differ by multiplication of an element in $H_\lambda^\times$, and hence the latter inherits the alternating or symmetric nature of the former.
\end{proof}

\begin{remark}
Theorem \ref{theorem:equivariant pairing} implies that the image of $\varrho_\lambda$ is contained in $\GSp_n(H_\lambda)$ when $B$ has Albert type $\I$ or $\II$, and in $\GO_n(H_\lambda)$ when the Albert type is $\III$. Moreover, we see that the similitude character $\nu$ satisfies $\nu\circ \varrho_\lambda=\varepsilon\chi_\ell$.
\end{remark}

\section{A family of  abelian fourfolds genuinely  of $\GL_4$-type}\label{section:family}
In this section we give a family of abelian fourfolds that can be used to produce examples of varieties of the second kind which are geometrically of the first kind. We will obtain the fourfolds by restriction of scalars of a family of Jacobians of genus 2 curves which are defined over a quadratic extension and are isogenous to their Galois conjugates. 

Let $k_0$ be a number field and let $k=k_0(\sqrt\Delta)$ be a quadratic extension. For $a,b,c\in k_0$, consider the polynomials $F_j=F_j(a,b,c)\in k[x]$ defined by 
\begin{align*}
  F_1 &=  \left(-\frac{1}{4} {b}^{2} {\Delta} + \frac{1}{4} {a}^{2} + {c} {\sqrt{\Delta}} + \frac{1}{4}\right) x^{2} + \left({b} {\sqrt{\Delta}} + {a}\right) x + 1,\\
  F_2 &=  \left(-\frac{1}{2} {b} {\Delta} + \frac{1}{2} {a} {\sqrt{\Delta}}\right) x^{2} + {\sqrt{\Delta}} x, \\
  F_3 &=   \left(\frac{1}{4} {b}^{2} {\Delta} {\sqrt{\Delta}} - \frac{1}{4} {a}^{2} {\sqrt{\Delta}} - {c} {\Delta} + \frac{1}{4} {\sqrt{\Delta}}\right) x^{2} + \left(-{b} {\Delta} - {a} {\sqrt{\Delta}}\right) x - {\sqrt{\Delta}}.
\end{align*}
 Define the family of curves $C = C(a,b,c)$ given by the equation
\begin{align*}
  C\colon y^2 = F_1(x)F_2(x)F_3(x).
\end{align*}

From now on, we assume that the parameters $(a,b,c)$ are chosen so that $C$ is a genus two curve satisfying $\End(\Jac(C)_{\overline{\Q}})=\Z$. This assumption is crucial for the results of this section. For instance, the equality $\End(\Jac(C))=\Z$ is used in the proof of Proposition~\ref{prop: mus}, while the stronger condition $\End(\Jac(C)_{\overline{\Q}})=\Z$ ensures that the family contains genuinely $\GL_4$-type abelian varieties (see Example \ref{ex:curve}). For a specific choice of $(a,b,c)$, this assumption can be verified either using the criterion of Section~\ref{section: criterion}, or by applying the algorithms of \cite{CMSV19} or \cite{Lom19}.

In addition, we also assume that $\sqrt{-2}\not\in k$. In particular, the field $$K = k(\sqrt{2})=k_0(\sqrt{\delta},\sqrt{-2})$$ is a biquadratic extension of $k_0$.

%From now on, we assume that the choice of the parameters $(a,b,c)$ is such that $C$ is a curve of genus $2$ with $\End(\Jac(C)_\Qbar)=\Z$. This is key for the validity of the results of this section. For example, that  $\End(\Jac(C))=\Z$ is used in the proof of Proposition \ref{prop: mus}, and that $\End(\Jac(C)_\Qbar)=\Z$ is what guarantees that we can produce examples of genuinely $\GL_4$-type varieties out of this family. Verifying the assumption for a specific choice of $(a,b,c)$ can be done either with the criterion provided in Section \ref{section: criterion}, or by applying the algorithm of \cite{CMSV19}

Let $q_{i,j}$ be the coefficient of $x^i$ in $F_j$, and let $\delta = \det \left(q_{i,j} \right)_{0\leq i,j\leq 2}$. A direct computation shows that
\begin{align}
  \label{eq:delta}
 \delta = \Delta/2. 
\end{align}
Define the polynomials
\begin{align*}
  L_1 = F_2'\cdot F_3 - F_2 \cdot F_3', \   L_2 = F_3'\cdot F_1 - F_3 \cdot F_1', \  L_3 = F_1'\cdot F_2 - F_1 \cdot F_2',
\end{align*}
where the prime denotes derivative. Consider the curve $\tilde{C}=\tilde{C}(a,b,c)$ defined by
\begin{align*}
  \tilde C \colon v^2 = \frac{1}{\delta}L_1(u)L_2(u)L_3(u).
\end{align*}

By the theory of Richelot isogenies (see \cite[Chapter 9]{CF96}) there is a $(2,2)$-isogeny  
\begin{align}
  \label{eq:richelot}
 \rho\colon \Jac(C)\ra \Jac(\tilde{C}). 
\end{align}
It is given by a correspondence $\Gamma\subset C\times \tilde C$, with explicit equations
\begin{equation*}\label{eq:gamma-correspondence}
    \Gamma:\begin{cases}
    F_1(x)L_1(u) + F_2(x)L_2(u) = 0\\
    y v - F_1(x)L_1(u) (x-u)=0.\\
    \end{cases}
\end{equation*}

Let ${s}$ be the generator of $\Gal(k/k_0)$. A direct computation shows that
\begin{align}\label{eq:Fs and Gs}
  {}^{s} F_1 = -  L_1 /\Delta,\   {}^{s} F_2 = - L_2 ,\ 
  {}^{s} F_3 = - L_3.
\end{align}
In fact, the triples $(F_1,F_2,F_3)$ were obtained precisely by imposing these conditions within the family of genus-$2$ curves admitting a $(2,2)$-isogeny, although we do not claim that every such triple satisfying these conditions arises from our family.  From \eqref{eq:delta} and \eqref{eq:Fs and Gs}, we obtain that
\begin{align}\label{eq: C and C sigma}
  {{}^{s} C } = \tilde C_{-2} \text{ and } {{}^{s} \tilde  C} = C_{-1/2}.
\end{align}
Here we are using the following notation for quadratic twists of hyperelliptic curves: if $H/k$ is a curve with equation $y^2 = f(x)$ and $\alpha\in k^\times$, then $H_\alpha$ denotes the curve $\alpha y^2 = f(x)$. We will also use a similar notation for twists of abelian varieties, in the special case of hyperelliptic Jacobians. Thus $\Jac(H)_\alpha$ denotes the $k(\sqrt{\alpha})/k$-twist of $\Jac(H)$, which we observe that can be identified with $\Jac(H_\alpha)$.

Recall that we defined $K = k(\sqrt{-2}) =k_0(\sqrt{\Delta},\sqrt{-2})$. Fix isomorphisms
\begin{align*}
\phi\colon C_{K} \lra (C_{-1/2})_{K},\  \   \tilde\phi\colon \tilde{C}_{K} \lra (\tilde{C}_{-2})_{K}
\end{align*}
given by
\begin{align}
  \label{eq:phiandphitilde}
\phi(x,y)=(x,\sqrt{-2}y) \ \text{ and} \ \tilde\phi(x,y)=(x,y/\sqrt{-2}).  
\end{align}
 We denote by the same name the isomorphisms induced on the Jacobians:
\begin{align*}
\phi\colon \Jac(C)_{K} \lra \Jac(C_{-1/2})_{K} \ \text{ and} \    \tilde\phi\colon\Jac (\tilde{C}_{K}) \lra \Jac(\tilde{C}_{-2})_{K}.
\end{align*}
Denote now by ${s}$ the element of $\Gal(K/k_0)$ with fixed field $k_0(\sqrt{-2})$. Observe that ${s}_{|k}$ is the generator of $\Gal(k/k_0)$, so this is consistent with our previous notation for ${s}$ and, in particular,  \eqref{eq:delta} and \eqref{eq:Fs and Gs} are still valid. We will from now on make the slight abuse of notation of denoting ${s}_{|k}$ also by ${s}$, and the context will make it clear which one is meant in each case.

Define $\mu_{s} \colon \Jac(C)_K\ra {{}^{s}\Jac}(C)_K$ to be the isogeny given by the composition
\begin{align*}
  \Jac(C)_K\stackrel{\rho}{\lra} \Jac(\tilde{C})_K \stackrel{\tilde{\phi}}{\lra}\Jac(\tilde{C}_{-2})_K = \Jac({{}^{s} C})_K.
\end{align*}

\begin{proposition}\label{prop: mus}
  We have that ${{}^{s} \mu_{s} }\circ \mu_{s} = -2$.
\end{proposition}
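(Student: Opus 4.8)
The plan is to reduce the identity to a single sign, and then extract that sign from the twisting data. Throughout we work over $K$, where all the maps in sight are defined.

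\emph{Reduction to $[\pm 2]$.} By hypothesis $\End(\Jac(C)_\Qbar)=\Z$, hence $\End^0(\Jac(C)_K)=\Q$, so the isogeny ${}^{s}\mu_{s}\circ\mu_{s}\colon\Jac(C)_K\to\Jac(C)_K$ is multiplication by some integer $m$. The Richelot map $\rho$ is a $(2,2)$-isogeny, so $\deg\rho=4$; since $\tilde\phi$ is an isomorphism, $\deg\mu_{s}=4$, and Galois conjugation preserves degrees, so $\deg{}^{s}\mu_{s}=4$. Therefore $m^4=\deg[m]=16$, forcing $m=\pm 2$, and only the sign remains.

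\emph{Comparison with the dual.} Let $\mu_{s}^\vee\colon\Jac({}^{s}C)_K\to\Jac(C)_K$ be the dual of $\mu_{s}$ for the canonical principal polarizations. Since $\tilde\phi$ is induced by an isomorphism of curves it preserves theta divisors, so $\tilde\phi^\vee=\tilde\phi^{-1}$; and $\rho$, being a $(2,2)$-isogeny, satisfies $\rho^\vee\circ\rho=[2]$. Hence
\[
\mu_{s}^\vee\circ\mu_{s}=\rho^\vee\circ\tilde\phi^\vee\circ\tilde\phi\circ\rho=\rho^\vee\circ\rho=[2].
\]
Moreover $\Hom^0(\Jac({}^{s}C)_K,\Jac(C)_K)$ is one-dimensional over $\Q$ (the two Jacobians are isogenous and $\End^0(\Jac(C)_K)=\Q$), so ${}^{s}\mu_{s}=c\cdot\mu_{s}^\vee$ for some $c\in\Q^\times$; comparing degrees gives $c=\pm1$. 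Thus it suffices to show $c=-1$, since then ${}^{s}\mu_{s}\circ\mu_{s}=-\mu_{s}^\vee\circ\mu_{s}=[-2]$.

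\emph{The sign.} By the involutivity of the Richelot construction \cite[Ch.~9]{CF96}, the dual $\rho^\vee\colon\Jac(\tilde C)_K\to\Jac(C)_K$ is the Richelot isogeny of $\tilde C$ read through the canonical identification of $\Jac(\widetilde{\tilde C})$ with $\Jac(C)$. The conjugate ${}^{s}\rho$ is the Richelot isogeny of ${}^{s}C=\tilde C_{-2}$; as the Richelot construction is compatible with quadratic twisting (one has $\widetilde{X_\alpha}=(\widetilde X)_\alpha$, with the twisting isomorphisms transported along), ${}^{s}\rho$ is obtained from $\rho^\vee$ by pre-composing with $\tilde\phi^{-1}\colon\Jac(\tilde C_{-2})_K\to\Jac(\tilde C)_K$ and post-composing with a twisting isomorphism $\theta$ of the target. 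Substituting into ${}^{s}\mu_{s}={}^{s}\tilde\phi\circ{}^{s}\rho$ gives ${}^{s}\mu_{s}=u\circ\rho^\vee\circ\tilde\phi^{-1}=u\circ\mu_{s}^\vee$, where $u$ is the automorphism of $\Jac(C)_K$ induced by the composite of the canonical isomorphism $C\to\widetilde{\tilde C}$, of $\theta\colon\widetilde{\tilde C}\to(\widetilde{\tilde C})_{-2}$, and of ${}^{s}\tilde\phi\colon{}^{s}\tilde C\to C$. Using \eqref{eq:delta} and \eqref{eq:Fs and Gs} one checks that $\widetilde{\tilde C}=C_{1/4}$ (consistent with $\widetilde{\tilde C_{-2}}=(\widetilde{\tilde C})_{-2}$ and with ${}^{s}\tilde C=C_{-1/2}$ of \eqref{eq: C and C sigma}); then in coordinates the above composite sends $(x,y)$ to $(x,2y)$, then to $(x,2y/\sqrt{-2})$, then to $(x,2y/(-2))=(x,-y)$, i.e. it is the hyperelliptic involution, so $u=[-1]$. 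Hence $c=-1$ and ${}^{s}\mu_{s}\circ\mu_{s}=[-2]$; the sign is exactly the negativity of $(\sqrt{-2})^2=-2$, which is itself forced by the minus signs in \eqref{eq:Fs and Gs} together with $\delta=\Delta/2$.

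\emph{Main obstacle.} The first two steps are routine bookkeeping with degrees and duals. The crux is the last step: one must track the normalization constants of the Richelot construction precisely enough to identify $\widetilde{\tilde C}$ with $C$ up to the specific square twist $C_{1/4}$, and then to see that the resulting composite of twisting isomorphisms is $[-1]$ rather than $[+1]$. An alternative that sidesteps the abstract involutivity statement is to compute directly with the correspondence $\Gamma$: its Galois conjugate ${}^{s}\Gamma$ is, by \eqref{eq:Fs and Gs}, again a Richelot correspondence, and comparing it on Jacobians with the transpose of $\Gamma$ yields the same conclusion.
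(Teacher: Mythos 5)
Your first two steps are sound: the reduction to ${}^{s}\mu_{s}\circ\mu_{s}=\pm 2$ via degrees, and the identity $\mu_{s}^\vee\circ\mu_{s}=[2]$ (the paper obtains $\rho^\dagger\circ\rho=2$ in essentially the same way, from positivity of the Rosati involution, $\End(\Jac(C))=\Z$, and the degree of a $(2,2)$-isogeny). The gap is in your third step, which is where the entire content of the proposition sits. Because $\End(\Jac(C))=\Z$, "up to an automorphism" means exactly "up to sign", so the two statements you invoke --- that $\rho^\vee$ \emph{is} the Richelot isogeny of $\tilde C$ read through the canonical identification $\Jac(\widetilde{\tilde C})\simeq \Jac(C)$, and that ${}^{s}\rho$ \emph{is} the Richelot isogeny of ${}^{s}C=\tilde C_{-2}$ with the twisting isomorphisms transported along --- must hold on the nose, not up to composing with the hyperelliptic involution; if either one is off by $[-1]$, your computation of $u$ gives $+2$ instead of $-2$. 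Neither statement is verified, and neither is available in \cite{CF96} at that level of sign precision: both depend on which explicit correspondence (including the constants and the sign in the $yv$-equation) one declares to be "the" Richelot isogeny. Concretely, conjugating the paper's correspondence $\Gamma$ does \emph{not} produce the standard Richelot correspondence of ${}^{s}C$: by \eqref{eq:Fs and Gs} and \eqref{eq:delta}, each of ${}^{s}F_i$ and ${}^{s}L_i$ acquires a minus sign relative to the Richelot data canonically attached to the quadratic splitting of ${}^{s}C$, and these signs enter the equation $yv-F_1(x)L_1(u)(x-u)=0$, which is odd in $(y,v)$; whether they cancel is precisely the computation you skipped. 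So the sign --- the only nontrivial part --- is asserted rather than proved, even though your bookkeeping of the twists ($\widetilde{\tilde C}=C_{1/4}$, the map $(x,y)\mapsto(x,-y)$) happens to land on the correct answer.

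The paper closes exactly this hole by the route you relegate to an "alternative": it conjugates $\Gamma$ explicitly, applies $(-\tilde\phi^{-1})\times\phi^{-1}$ --- note the extra hyperelliptic involution, which is where the minus sign is actually caught --- and identifies the result with the transpose correspondence $\Gamma^T$; by \cite[Proposition 3.3.16]{Smi05}, $\Gamma^T$ induces the Rosati dual $\rho^\dagger$, whence $\rho^\dagger=-\phi^{-1}\circ{}^{s}\rho\circ\tilde\phi$, and the conclusion follows from $\rho^\dagger\circ\rho=2$ and ${}^{s}\tilde\phi\circ\phi=\mathrm{id}$. To make your argument complete you would have to either carry out that correspondence computation yourself, or prove, in the paper's normalization, the exact-signed involutivity and twist-compatibility statements you cite; as written, the proof assumes the sign it is meant to establish.
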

\begin{proof}
  The isogeny $\rho$ is given by the correspondence $\Gamma\subset C\times \tilde C$. Hence, ${}^{s}\rho$ is given by the correspondence
\begin{equation*}
    {{}^{s}\Gamma}:\begin{cases}
    {}^{s}L_1(x){}^{s}F_1(u) + {}^{s}L_2(x){}^{s}F_2(u) = 0\\
    y v - {}^{s}L_1(x){}^{s}F_1(u) (x-u)=0.\\
    \end{cases}
  \end{equation*}
From \eqref{eq:Fs and Gs} and the facts that $\Delta\in k_0$ and $s$ has order 2, we obtain that
\begin{align}\label{eq:Fs and Gs 2}
   F_1 = -   {}^{s}L_1 /\Delta,\   F_2 = -  {}^{s}L_2 ,\ 
  F_3 = -  {}^{s}L_3.
\end{align}
Applying \eqref{eq:Fs and Gs} and \eqref{eq:Fs and Gs 2}, we see that ${}^{s}\rho$ is given by the correspondence
\begin{equation*}
    {{}^{s}\Gamma}:\begin{cases}
    L_1(x)F_1(u) + L_2(x)F_2(u) = 0\\
    y v - L_1(x)F_1(u) (x-u)=0.\\
    \end{cases}
  \end{equation*}
Since ${}^{s} \Gamma\subset {}^{s} C \times {}^{s} \tilde C=\tilde C_{-2}\times C_{-1/2}$, we can apply $ \tilde\phi^{-1}\times \phi^{-1}$ to obtain a correspondence in $\tilde C\times C$. In fact, for our purposes it is more convenient to apply $ (-\tilde\phi^{-1})\times \phi^{-1}$, where $-\tilde\phi^{-1}$ stands for the composition of $\tilde \phi^{-1}$ with the hyperelliptic involution. Thanks to \eqref{eq:phiandphitilde}, we see that $((-\tilde\phi^{-1})\times \phi^{-1})({{}^{s}\Gamma})$ is given by
\begin{equation}\label{eq:corr}
    ((-\tilde\phi^{-1})\times \phi^{-1})({{}^{s}\Gamma}):\begin{cases}
    L_1(x)F_1(u) + L_2(x)F_2(u) = 0\\
    y v - L_1(x)F_1(u) (u-x)=0,\\
    \end{cases}
  \end{equation}
  from which we see that
  \begin{align}
    \label{eq:transpose}
  ((-\tilde\phi^{-1})\times \phi^{-1})({{}^{s}\Gamma})= \Gamma^T,  
  \end{align}
where $\Gamma^T$ denotes the transpose of $\Gamma$ (that is, the image of $\Gamma$ under the isomorphism $C\times \tilde C\simeq\tilde C\times C$). By \cite[Proposition 3.3.16]{Smi05}, the isogeny corresponding to $\Gamma^T$ is $\rho^\dagger$, the Rosati dual of $\rho$. Therefore, \eqref{eq:transpose} implies that
\[
  \rho^\dagger = -\phi^{-1} \circ {{}^{s}\rho}\circ \tilde\phi.
\]
Since the Rosati involution is positive definite and $\End(\Jac(C))=\Z$, we have that $\rho^\dagger \circ\rho \in \Z_{>0}$. We know that $\rho$ is a $(2,2)$-isogeny, which then implies that $\rho^\dagger\circ\rho=2$.

Finally, we compute that
\begin{align*}
  {{}^{s} \mu_{s} }\circ \mu_{s}  &= {{}^{s}\tilde\phi }\circ {{}^{s}\rho} \circ   \tilde \phi\circ \rho  \\
& =- {{}^{s}\tilde\phi } \circ\phi\circ\rho^\dagger\circ\tilde\phi^{-1}\circ    \tilde \phi\circ \rho \\
  &=-{{}^{s}\tilde\phi } \circ\phi\circ\rho^\dagger\circ   \rho =-2,
\end{align*}
where in the last step we have used that  ${{}^{s}\tilde\phi } \circ\phi (x,y)=(x,y)$; this follows from the explicit expressions \eqref{eq:phiandphitilde} and the fact that ${s}$ fixes $\sqrt{-2}$.
\end{proof}

$\Jac(C)$ is an abelian $k_0$-variety completely defined over $K$ (in the sense of Definition \ref{def: completely defined}).  In some situations, we can find a twist of it that is completely defined over $k$.
\begin{proposition}\label{prop:alpha}
 Let $\alpha\in k$ be an element such that $\Nm_{k/k_0}(\alpha)\in -2\cdot  k^{\times,2}$. Then ${}^{s} \Jac(C_\alpha)\sim \Jac(C_\alpha)$.
\end{proposition}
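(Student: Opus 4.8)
The plan is to express ${}^s\Jac(C_\alpha)$ as a quadratic twist of $\Jac(\tilde C)$, transport the Richelot isogeny to the twisted curves, and observe that the norm hypothesis forces the two twisting parameters that arise to agree modulo squares.

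First I would identify ${}^s\Jac(C_\alpha)$. Applying $s$ to the defining equation of $C_\alpha$ (in the model $\alpha y^2 = F_1F_2F_3$) shows ${}^s(C_\alpha) = ({}^sC)_{{}^s\alpha}$; feeding in ${}^sC = \tilde C_{-2}$ from \eqref{eq: C and C sigma} and composing quadratic twists gives ${}^s(C_\alpha) = \tilde C_{-2\cdot{}^s\alpha}$, hence ${}^s\Jac(C_\alpha) = \Jac(\tilde C)_{-2\cdot{}^s\alpha}$ after passing to Jacobians and using the identification $\Jac(H_\gamma)=\Jac(H)_\gamma$. Second, the Richelot isogeny $\rho\colon \Jac(C)\to\Jac(\tilde C)$ of \eqref{eq:richelot} is defined over $k$, and a $k$-isogeny between abelian varieties over $k$ induces a $k$-isogeny between their quadratic twists by any element of $k^\times$ (the twisting cocycle takes values in $\{\pm 1\}$, which is central and hence commutes with $\rho$); thus $\Jac(C_\alpha) = \Jac(C)_\alpha \sim \Jac(\tilde C)_\alpha$ over $k$.

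It then remains to identify $\Jac(\tilde C)_\alpha$ with $\Jac(\tilde C)_{-2\cdot{}^s\alpha}$, which is where the hypothesis enters. A quadratic twist depends only on the class of its parameter in $k^\times/k^{\times 2}$, so it suffices to verify $\alpha \equiv -2\cdot{}^s\alpha \pmod{k^{\times 2}}$, equivalently that $-2\,\alpha\cdot{}^s\alpha = -2\,\Nm_{k/k_0}(\alpha)$ is a square in $k$. By assumption $\Nm_{k/k_0}(\alpha) = -2\beta^2$ with $\beta\in k^\times$, so $-2\,\Nm_{k/k_0}(\alpha) = (2\beta)^2$. Chaining the three steps, ${}^s\Jac(C_\alpha) = \Jac(\tilde C)_{-2\cdot{}^s\alpha} = \Jac(\tilde C)_\alpha \sim \Jac(C_\alpha)$ over $k$.

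I do not expect a genuine obstacle. The only points demanding care are the bookkeeping of quadratic-twist conventions — in particular that ${}^s\alpha$, not $\alpha$, surfaces after conjugation, and the way twists compose — together with the formal check that $\rho$ survives twisting (immediate once one notes that $\Jac(C)$ and $\Jac(\tilde C)$ are $k$-isogenous and that only $\pm 1$ enters through the twisting cocycle). Conceptually, the condition $\Nm_{k/k_0}(\alpha)\in -2\cdot k^{\times 2}$ is exactly what is needed to absorb the $\sqrt{-2}$ appearing in $\mu_s$, here made visible by the identity ${}^sC = \tilde C_{-2}$.
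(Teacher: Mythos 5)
Your proposal is correct and follows essentially the same route as the paper: conjugate the twisted curve using ${}^{s}C=\tilde C_{-2}$, use the norm condition $\Nm_{k/k_0}(\alpha)\in-2\cdot k^{\times,2}$ to match the twist parameters modulo squares, and descend the Richelot isogeny to the twists (your remark that the $\pm1$ twisting cocycle is central and commutes with $\rho$ is exactly the content of the paper's explicit check $\psi\circ{}^{t}\psi^{-1}=1$). The only difference is the order of the steps, which is immaterial.
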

\begin{proof}
  Observe that ${{}^{s}\Jac}(C_\alpha)$ is the Jacobian of $^{{s}}(C_\alpha)$. Since $^{{s}}(C_\alpha) = \left({}^{s} C\right)_{{s} \alpha}$ and ${}^{s} C = \tilde{C}_{-2}$, we find that
  \begin{align*}
    ^{{s}}\Jac(C_\alpha) \simeq  \Jac(\tilde{C}_{-1/2})_{{s}\alpha}\simeq \Jac(\tilde{C}_{-{s} \alpha/2}).
  \end{align*}
By hypothesis $-{s} \alpha /2= \beta^2\alpha$ for some $\beta\in k^\times$, so we find that
\begin{align*}
    ^{{s}}\Jac(C_\alpha) \simeq \Jac(\tilde{C}_\alpha).
\end{align*}
To finish the proof it is enough to show that $\Jac(\tilde{C}_\alpha)\sim \Jac(C_\alpha)$. For this, consider the Richelot isogeny $\rho\colon \Jac(C)\ra \Jac(\tilde{C})$. We also consider the isomorphisms
\[
{\phi_\alpha}\colon \Jac(C)_{k(\sqrt{\alpha})} \ra \Jac(C_\alpha)_{k(\sqrt{\alpha})}\text{ and }\tilde{\phi_\alpha}\colon \Jac(\tilde{C})_{k(\sqrt{\alpha})}\ra\Jac(\tilde{C}_\alpha)_{k(\sqrt{\alpha})},
\]
which satisfy that ${}^{t}{\phi_\alpha}^{-1}\circ {\phi_\alpha}=-1$  and ${}^{t}\tilde{\phi_\alpha}^{-1}\circ \tilde{\phi_\alpha}=-1$, where ${t}$ is the generator of $\Gal(k(\sqrt{\alpha})/k)$. Consider the isogeny $\psi\colon \Jac(C_\alpha)_{k(\sqrt{\alpha})}\ra \Jac(\tilde{C}_\alpha)_{k(\sqrt{\alpha})}$ given by $\psi = \tilde{{\phi_\alpha}}\circ \rho\circ {\phi_\alpha}^{-1}$. The computation
\begin{align*}
  \psi\circ {}^{t}\psi^{-1}  &= \tilde{{\phi_\alpha}}\circ \rho\circ {\phi_\alpha}^{-1} \circ {^{{t}}{\phi_\alpha}} \circ {^{{t}}\rho^{-1}} \circ {^{{t}}\tilde{{\phi_\alpha}}^{-1}}\\
&=  \tilde{{\phi_\alpha}}\circ \rho\circ (-1)\circ{{}^{t}\rho}^{-1} \circ {^{{t}}\tilde{{\phi_\alpha}}^{-1}} = (-1) (-1 ) = 1
\end{align*}
shows that $\psi$ is, in fact, defined over $k$.
\end{proof}
\begin{remark}
  Since $k=k_0(\sqrt{\Delta})$, the condition $\Nm_{k/k_0}(\alpha)\in -2\cdot  k^{\times,2}$ implies that either  $\on{Nm}_{k/k_0}(\alpha) \in -2\cdot k_0^{\times,2}$ or $\on{Nm}_{k/k_0}(\alpha) \in -2\Delta\cdot k_0^{\times,2}$
\end{remark}
Let $\alpha$ be as in Proposition \ref{prop:alpha}, so that $\Jac(C_\alpha)$ is an abelian $k_0$-variety completely defined over $k$ in the sense of \cite[\S3]{GQ14}. Denote by $\gamma_\alpha\in H^2(\Gal(k/k_0),\Q^\times)$ the cohomology class associated to the abelian $k$-variety $\Jac(C_\alpha)$ as in \cite{Rib92} and \cite{Pyl04}. In our particular case, a cocycle $c_\alpha$ representing $\gamma_\alpha$ can be computed in the following manner: fix an isogeny $\mu_{\alpha,{s}}\colon\Jac(C_\alpha)\ra {{}^{{s}}\Jac}(C_\alpha)$ and define
\begin{align*}
  c_\alpha(1,1) = 1,\ c_\alpha(1,{s})=1, \ c_\alpha ({s},1 ) = 1,\ c_\alpha({s},{s})={{}^{s} \mu}_{\alpha,{s}}\circ \mu_{\alpha,{s}}.
\end{align*}
The value $c_\alpha({s},{s})$ is well defined as an element of $\Q^\times /\Q^{\times,2}$, since a different choice of isogeny $\mu_{\alpha,{s}}$ will change $c_\alpha({s},{s})$ by multiplication by the square of a non-zero rational number.

\begin{proposition}\label{prop:compcocyle} Up to multiplication by elements in $\Q^{\times,2}$, we have that
  \[
    c_\alpha({s}, {s}) = \begin{cases}
    -2,&\text{ if }\on{Nm}_{k/k_0}(\alpha) \in -2\cdot k_0^{\times,2},\\
    +2,&\text{ if }\on{Nm}_{k/k_0}(\alpha) \in -2\Delta\cdot k_0^{\times,2}.
    \end{cases}
\]
\end{proposition}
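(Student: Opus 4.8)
The plan is to compute the cocycle value $c_\alpha(s,s) = {}^s\mu_{\alpha,s}\circ\mu_{\alpha,s}$ explicitly, by tracking the isogeny $\mu_{\alpha,s}$ through the twisting maps $\phi_\alpha$ and the Richelot isogeny, and then to reduce the answer modulo squares. First I would fix the isogeny $\mu_{\alpha,s}$ in the most convenient way available: following the proof of Proposition \ref{prop:alpha}, we have the isogeny $\psi = \tilde\phi_\alpha\circ\rho\circ\phi_\alpha^{-1}\colon \Jac(C_\alpha)\to\Jac(\tilde C_\alpha)$ defined over $k$, together with the identification ${}^s\Jac(C_\alpha)\simeq \Jac(\tilde C_\alpha)$ coming from $-s\alpha/2 = \beta^2\alpha$ (so $\Jac(\tilde C_{-s\alpha/2})\simeq\Jac(\tilde C_\alpha)$ via multiplication by $\beta$ on the $y$-coordinate). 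Composing, we get a concrete model for $\mu_{\alpha,s}\colon\Jac(C_\alpha)\to{}^s\Jac(C_\alpha)$, and then $c_\alpha(s,s) = {}^s\mu_{\alpha,s}\circ\mu_{\alpha,s}$ can be assembled from the already-established relations: $\rho^\dagger\circ\rho = 2$, the behavior ${}^t\phi_\alpha^{-1}\circ\phi_\alpha = -1$ of the twisting isomorphisms, and the computation $\rho^\dagger = -\phi^{-1}\circ{}^s\rho\circ\tilde\phi$ from the first Proposition in \S\ref{section:family}.

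The key step is to chase through the analogue of the identity ${}^s\mu_s\circ\mu_s = -2$ proved above for the untwisted variety, but now keeping track of the twisting factor $\beta$ (equivalently, of $\alpha$ and $s\alpha$). Concretely, writing $\mu_{\alpha,s}$ as a composition of $\rho$, the isomorphisms $\phi_\alpha,\tilde\phi_\alpha$, and the scaling by $\beta$ identifying $\tilde C_{-s\alpha/2}$ with $\tilde C_\alpha$, the composite ${}^s\mu_{\alpha,s}\circ\mu_{\alpha,s}$ will pick up a factor of $2$ from $\rho^\dagger\circ\rho$ (as in the untwisted case) together with a factor coming from ${}^s\beta\cdot\beta = \Nm_{k/k_0}(\beta)$, and possibly a sign from the composition of the two twist-isomorphisms. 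Since $\beta^2\alpha = -s\alpha/2$, one has $\beta^2 = -s\alpha/(2\alpha)$, hence $\Nm_{k/k_0}(\beta)^2 = \beta^2\cdot{}^s(\beta^2) = \frac{-s\alpha}{2\alpha}\cdot\frac{-\alpha}{2\,{}^s\alpha} = \frac{1}{4}$, so $\Nm_{k/k_0}(\beta) = \pm\tfrac12$, and the sign is exactly what distinguishes the two cases. To pin down this sign I would compute $\Nm_{k/k_0}(\alpha)$ modulo $k_0^{\times,2}$: the hypothesis says $\Nm_{k/k_0}(\alpha)\in -2\cdot k^{\times,2}$, which by the Remark forces $\Nm_{k/k_0}(\alpha)\in -2\cdot k_0^{\times,2}$ or $-2\Delta\cdot k_0^{\times,2}$ (since $\Nm_{k/k_0}(\alpha)\in k_0^\times$ and $k^{\times,2}\cap k_0^\times = k_0^{\times,2}\cup\Delta k_0^{\times,2}$); tracing this through the relation between $\Nm(\alpha)$ and $\Nm(\beta)$ (using $\beta\in k^\times$, so $\Nm_{k/k_0}(\beta)\in k_0^{\times,2}$-class constraints are vacuous but its \emph{value} is determined up to sign by $\alpha$) yields the two subcases $c_\alpha(s,s)\equiv \mp 2$.

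The main obstacle I expect is getting the sign bookkeeping exactly right: the hyperelliptic involution appears in the definition of $-\tilde\phi^{-1}$ used in the untwisted computation, the two twisting isomorphisms $\phi_\alpha,\tilde\phi_\alpha$ each contribute a $-1$ under conjugation, and the identification of $\Jac(\tilde C_{-s\alpha/2})$ with $\Jac(\tilde C_\alpha)$ involves a choice of square root $\beta$ whose norm sign is precisely the invariant to be computed. A clean way to organize this is to note that $c_\alpha(s,s)$ is, up to $\Q^{\times,2}$, the product of the "geometric" contribution $\rho^\dagger\circ\rho = 2$ (coming from the Richelot isogeny, independent of $\alpha$) and a "twist" contribution equal to $\Nm_{k/k_0}(\alpha)/(-2)$ read in $k_0^\times/k_0^{\times,2}$ — indeed the twisting by $\alpha$ changes the class of $\mu_s$ by $\phi_\alpha$-factors whose net effect on ${}^s\mu_s\circ\mu_s$ is multiplication by $\Nm_{k/k_0}(\alpha)$ up to squares and the fixed $-1/2$ from the original twist $C_{-1/2} = {}^s\tilde C$. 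Thus $c_\alpha(s,s)\equiv 2\cdot\bigl(\Nm_{k/k_0}(\alpha)\cdot(-2)\bigr)\equiv -4\Nm_{k/k_0}(\alpha)\equiv -\Nm_{k/k_0}(\alpha)\pmod{\Q^{\times,2}}$ (using $4\in\Q^{\times,2}$), which is $-(-2) = 2$ in the first case and $-(-2\Delta) = 2\Delta$ in the second — but $\Delta\in k_0^{\times,2}$ is false in general, so one must instead observe that the relevant square class lives in $\Q^\times/\Q^{\times,2}$ (not $k_0^\times/k_0^{\times,2}$), and $\Delta\in k^{\times,2}$ makes the second case collapse to $+2$ while the first gives $-2$; verifying this final descent of square classes from $k_0$ (or $k$) down to $\Q$ is the delicate point, and I would handle it by exhibiting explicit elements of $k^\times$ or $k_0^\times$ whose squares realize the needed identifications, exactly as the parameters $a,b,c\in k_0$ and $\Delta\in k_0^\times$ allow.
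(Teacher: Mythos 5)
Your overall plan---choosing $\mu_{\alpha,s}$ explicitly as a composition of the Richelot isogeny, the twist isomorphisms and the scaling by $\beta$, and then computing ${}^{s}\mu_{\alpha,s}\circ\mu_{\alpha,s}$ directly---is a genuinely different route from the paper's, and it could in principle be carried out. But as written the proposal has a real gap: the sign bookkeeping, which is the entire content of the proposition, is never actually done, and the ``clean'' closed formula you propose is wrong. First, $c_\alpha(s,s)\equiv -\on{Nm}_{k/k_0}(\alpha)\pmod{\Q^{\times,2}}$ is not even well defined: $\on{Nm}_{k/k_0}(\alpha)$ lies in $k_0^\times$, while $c_\alpha(s,s)$ is an element of $\Q^\times$ (because $\End^0(\Jac(C_\alpha)_{\Qbar})=\Q$) determined only up to $\Q^{\times,2}$; you may discard rational squares, but not squares from $k_0$ or $k$. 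Taken at face value your formula gives $+2$ in the first case, where the correct answer is $-2$, and $2\Delta$ in the second; the attempted repair ``$\Delta\in k^{\times,2}$ makes the second case collapse to $+2$ while the first gives $-2$'' both contradicts your own evaluation two lines earlier and is illegitimate, since square classes in $\Q^\times/\Q^{\times,2}$ cannot be adjusted by squares in $k$. Also note that the ``geometric'' contribution of the Richelot isogeny to the cocycle is $-2$, not $2$ (that minus sign is exactly what the correspondence computation ${}^{s}\mu_s\circ\mu_s=-2$ establishes), and the twist contributes a sign $\pm1$, not a factor of $\on{Nm}_{k/k_0}(\alpha)$.

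What is salvageable, and in fact correct, is your observation that $\on{Nm}_{k/k_0}(\beta)=\pm\tfrac12$ with the sign distinguishing the two cases: one can check that, up to rational squares, $c_\alpha(s,s)=4\,\on{Nm}_{k/k_0}(\beta)$, which does give $-2$ and $+2$ respectively. But the proposal never proves the identity linking $c_\alpha(s,s)$ to this quantity. The missing mechanism is the following: if one conjugates the compatible system $\mu_s$ of $\Jac(C)$ by the twisting isomorphism $(x,y)\mapsto(x,y/\sqrt{\alpha})$, the composite ${}^{s}\mu_{\alpha,s}\circ\mu_{\alpha,s}$ equals $-2$ times the sign $\tilde{s}(\tilde{s}(\sqrt{\alpha}))/\sqrt{\alpha}$, where $\tilde{s}\in\Gal(K(\sqrt{\alpha})/k_0)$ lifts $s$; this sign is precisely the value at $(s,s)$ of the $2$-cocycle of the extension $1\to\Gal(K(\sqrt{\alpha})/K)\to\Gal(K(\sqrt{\alpha})/k_0)\to\Gal(K/k_0)\to 1$, and an elementary Galois computation (using $\tilde{s}(\sqrt{-2})=\sqrt{-2}$, $\tilde{s}(\sqrt{-2\Delta})=-\sqrt{-2\Delta}$) shows it is $+1$ when $\on{Nm}_{k/k_0}(\alpha)\in-2\,k_0^{\times,2}$ and $-1$ when $\on{Nm}_{k/k_0}(\alpha)\in-2\Delta\,k_0^{\times,2}$. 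This is exactly how the paper proceeds, quoting \cite[Lemma 6.1]{GQ14} to factor $c_\alpha(s,s)=c(s,s)\,c^{\pm}_\alpha(s,s)$ with $c(s,s)=-2$ from the first proposition of the section; your sketch contains neither this factorization nor any substitute for it, so the step where the Galois action on $\sqrt{\alpha}$ enters the computation is missing.
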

\begin{proof}
  We have that $\Jac(C)$ is completely defined over $K$. By Proposition \ref{prop: mus} the twist $C_\alpha$ is completely defined over $k$ and, in particular, it is also completely defined over $K$. By \cite[Lemma 6.1]{GQ14}, the cohomology class $[c]$ of $\Jac(C)$ and $[c_\alpha]$ differ by the cohomology class  $[c_\alpha^\pm]\in H^2(\Gal(K/k_0),\{\pm 1\})$ associated to the exact sequence
  \begin{align*}
    1 \lra \Gal(K(\sqrt{\alpha})/K)\lra \Gal(K(\sqrt{\alpha})/k_0)\lra \Gal(K/k_0)\lra 1.
  \end{align*}
  Therefore we see that
  \begin{align*}
    c_\alpha({s},{s}) = c({s},{s})c_\alpha^{\pm} ({s},{s}).
  \end{align*} 
  By Proposition \ref{prop:alpha} we have that $c({s},{s})=-2$. In order to finish the proof we need to check that $c_\alpha^\pm(s,s)=1$ if and only if $\mathrm{Nm}_{k/k_0}(\alpha) \in -2\cdot k_0^{\times,2}$. Let $\tilde{s} \in\Gal(K(\sqrt{\alpha})/k_0)$ be any lift of ${s}$. By definition we have that
  \begin{align*}
    c_\alpha^{\pm}({s},{s}) = \frac{\tilde{s}(\tilde{s}(\sqrt{\alpha}))}{\sqrt{\alpha}}.
  \end{align*}
  Since $\tilde{s}$ does not fix $K$, we have that $\tilde{s} (\sqrt{\alpha})=\sqrt{\alpha'}$, where $\alpha'={s}(\alpha)$. Suppose that $\alpha\alpha'=-2\beta^2$ for some $\beta\in k_0^\times$. Then $\sqrt{\alpha}\sqrt{\alpha'}=\lambda \sqrt{-2}\beta$ for some $\lambda\in \{\pm 1\}$ and we have that
  \begin{align*}
    \tilde{s}(\sqrt{\alpha}) = \sqrt{\alpha'} = \frac{\lambda \sqrt{-2} \beta}{\sqrt{\alpha}}.
  \end{align*}
  Applying $\tilde{s}$ to this equality and using that $\tilde{s}(\sqrt{-2})=\sqrt{-2}$ we obtain that $\tilde{s}(\tilde{s}(\sqrt{\alpha}))= \sqrt{\alpha}$, from which we see that $c_\alpha^\pm({s},{s})=1$. On the other hand, if $\alpha\alpha'=-2\Delta \beta^2$ for some $\beta\in k_0^\times$, we have that $\sqrt{\alpha}\sqrt{\alpha'}=\lambda \sqrt{-2\Delta}\beta$ for some $\lambda\in \{\pm 1\}$. Therefore,
  \begin{align*}
    \tilde{s}(\sqrt{\alpha}) = \sqrt{\alpha'} = \frac{\lambda \sqrt{-2\Delta} \beta}{\sqrt{\alpha}}.
  \end{align*}
  Applying $\tilde{s}$ to this equality and using that $\tilde{s}(\sqrt{-2\Delta})=-\sqrt{-2\Delta}$ we obtain that $\tilde{s}(\tilde{s}(\sqrt{\alpha}))= -\sqrt{\alpha}$ from which we see that $c_\alpha^\pm({s},{s})=-1$.
  
\end{proof}
Define $A_\alpha=A_\alpha(a,b,c)$ to be the abelian fourfold over $k_0$ obtained by restriction of scalars $A_\alpha =\Res_{k/k_0}(\Jac(C_\alpha))$.
\begin{proposition}\label{prop:endalg}
  \[
    \End^0(A_\alpha)\simeq   \begin{cases}
    \Q(\sqrt{-2}),&\text{ if }\on{Nm}_{k/k_0}(\alpha) \in -2(k_0^\times)^2,\\
    \Q(\sqrt{2}),&\text{ if }\on{Nm}_{k/k_0}(\alpha) \in -2\Delta(k_0^\times)^2.
    \end{cases}
\]
\end{proposition}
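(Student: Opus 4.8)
The plan is to identify $\End^0(A_\alpha)$ with the group algebra of $\Gal(k/k_0)$ twisted by the cocycle $c_\alpha$, and then read off the answer from Proposition~\ref{prop:compcocyle}. First I would record that $\End^0(\Jac(C_\alpha)) = \Q$: the curve $C_\alpha$ is a quadratic twist of $C$, so $\Jac(C_\alpha)_\Qbar \simeq \Jac(C)_\Qbar$, and by our standing assumption $\End(\Jac(C)_\Qbar) = \Z$; hence already $\End^0(\Jac(C_\alpha)) = \Q$. By Proposition~\ref{prop:alpha}, $\Jac(C_\alpha)$ is an abelian $k_0$-variety, and, with $\mu_{\alpha,s}\colon \Jac(C_\alpha)\to {}^s\Jac(C_\alpha)$ the isogeny fixed before Proposition~\ref{prop:compcocyle}, the associated $2$-cocycle $c_\alpha$ on $G:=\Gal(k/k_0)$ has $c_\alpha(s,s) = {}^s\mu_{\alpha,s}\circ\mu_{\alpha,s}\in \End^0(\Jac(C_\alpha))^\times = \Q^\times$, well defined modulo $\Q^{\times,2}$.

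Next I would invoke the standard description of the endomorphism algebra of the restriction of scalars of a building block (see \cite[\S6, \S8]{Rib92}, and \cite{Pyl04}, \cite{GQ14} for general base fields): since $\End^0(\Jac(C_\alpha)) = \Q$ and $\Jac(C_\alpha)$ is an abelian $k_0$-variety, there is a $\Q$-algebra isomorphism
\[
	\End^0(A_\alpha) = \End^0\big(\Res_{k/k_0}\Jac(C_\alpha)\big) \simeq \Q^{c_\alpha}[G],
\]
the group algebra of $G$ over $\Q$ twisted by $c_\alpha$. To keep the argument self-contained one can verify this directly when $[k:k_0]=2$: by adjunction, $\End^0(A_\alpha)\simeq \Hom_k\big((A_\alpha)_k,\Jac(C_\alpha)\big)$, and since $(A_\alpha)_k\simeq \Jac(C_\alpha)\times{}^s\Jac(C_\alpha)$ with ${}^s\Jac(C_\alpha)\sim \Jac(C_\alpha)$ and $\End^0(\Jac(C_\alpha)) = \Q$, the right-hand side is a $2$-dimensional $\Q$-vector space, spanned by the identity and by the class of $\mu_{\alpha,s}^{-1}\in\Hom_k({}^s\Jac(C_\alpha),\Jac(C_\alpha))$. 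Being semisimple, $\End^0(A_\alpha)$ is then either $\Q\times\Q$ or a quadratic field, and unwinding the composition law identifies it with $\Q[t]/(t^2 - c_\alpha(s,s))$, an algebra depending only on $c_\alpha(s,s)$ modulo squares.

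Finally I would substitute the value of $c_\alpha(s,s)$ supplied by Proposition~\ref{prop:compcocyle}: up to squares it equals $-2$ when $\on{Nm}_{k/k_0}(\alpha)\in -2(k_0^\times)^2$ and $2$ when $\on{Nm}_{k/k_0}(\alpha)\in -2\Delta(k_0^\times)^2$. As neither $-2$ nor $2$ is a square in $\Q$, the algebra $\Q[t]/(t^2 - c_\alpha(s,s))$ is the quadratic field $\Q(\sqrt{-2})$ in the first case and $\Q(\sqrt{2})$ in the second, which is the assertion of the proposition.

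The only real difficulty is notational: one must make sure the cocycle $c_\alpha$ computed in Proposition~\ref{prop:compcocyle} is precisely the one governing the twisted group algebra $\Q^{c_\alpha}[G]\simeq \End^0(A_\alpha)$, and that the dimension count forces this twisted group algebra to exhaust $\End^0(A_\alpha)$ rather than sit inside it as a proper subalgebra. The adjunction computation above settles both points, since it yields $\dim_\Q\End^0(A_\alpha) = 1 + \dim_\Q\Hom_k({}^s\Jac(C_\alpha),\Jac(C_\alpha)) = 2 = \dim_\Q\Q^{c_\alpha}[G]$.
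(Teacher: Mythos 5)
Your proposal is correct and follows essentially the same route as the paper, which identifies $\End^0(A_\alpha)$ with the twisted group algebra $\Q^{c_\alpha}[\Gal(k/k_0)]$ (citing \cite[Proposition 5.32]{Gui10}, the $\GL_n$-type generalization of Ribet's lemma) and then reads off the answer from Proposition \ref{prop:compcocyle}. Your additional adjunction computation, giving $\dim_\Q\End^0(A_\alpha)=2$ and the relation $t^2=c_\alpha(s,s)$ directly, is a harmless self-contained substitute for that citation and does not change the argument.
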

\begin{proof}
  By \cite[Proposition 5.32]{Gui10}, which generalizes to the case of varieties of $\GL_n$-type a result of Ribet for $\GL_2$-type \cite[Lemma 6.4]{Rib92},   $\End^0(A_\alpha)$ is isomorphic to the twisted group algebra $\Q^{c_\alpha}[\Gal(k/k_0)]$. The result follows immediately from Proposition \ref{prop:compcocyle}.

\end{proof}
Proposition \ref{prop:endalg} implies that $A_\alpha$ is of $\GL_4$-type. Since we are assuming that $\End(\Jac(C)_\Qbar)=\Z$, we have that $\End^0(A_{\alpha,\Qbar})\simeq \M_2(\Q)$; therefore, $A_\alpha$ is genuinely of $\GL_4$-type.

\begin{example}\label{ex:curve}
  As a particular example, take $k_0=\Q$, $k=\Q(\sqrt{2})$, $\Delta = 2$,  $(a,b,c)=(1,1,2)$, and $\alpha = \sqrt{2}$. In this case the curve is
  \begin{align*}
    C_\alpha\colon y^2 &= (-10{\sqrt{2}} + 18)x^6 - (33/2{\sqrt{2}} - 12)x^5 - (11/2{\sqrt{2}} + 21)x^4\\ &- (3{\sqrt{2}} + 24)x^3 - (5{\sqrt{2}} + 6)x^2 - 2{\sqrt{2}}x.
  \end{align*}
  The criterion of Proposition \ref{prop:criterion} applied to  primes of $k$ lying over $p=5$ and $q=11$, respectively, shows that
  \[
    \End(\Jac(C_\alpha)_\Qbar)=\Z.
  \]
  Indeed, if $\p$ and $\fq$ are the primes of $k$ lying above 5 and 11 respectively, then
  \begin{equation*}
    P(\Jac(C_\alpha)_\p,T) =  1 - 8T + 34T^2 - 200T^3 + 625T^4, 
  \end{equation*}
  \begin{equation*}
    P(\Jac(C_\alpha)_\fq,T) = 1 - 28T + 390T^2 - 3388T^3 + 14641T^4.
  \end{equation*}
One then checks that $\p$ and $\fq$ are both stably irreducible (by the test of Lemma \ref{lemma: stably test}), both ordinary, and that their splitting fields $K^\p$ and $K^\fq$  are linearly disjoint over $\Q$.
  
This can be seen alternatively by applying \cite{CMSV19} or \cite{Lom19}. Since $\on{Nm}_{k/k_0}(\alpha)=-2$, we have that $\Jac(C_\alpha)$ is an abelian $\Q$-variety completely defined over $k$ and $A_\alpha=\Res_{k/\Q}\Jac(C_\alpha)$ is an abelian fourfold genuinely of $\GL_4$-type with
  \[
    \End^0(A_\alpha)\simeq \Q(\sqrt{-2}).
  \]
This provides an explicit example of variety genuinely of $\GL_4$-type of the second kind which is geometrically of the first kind. Note also that the nebentype character is nontrivial; in fact, by Corollary \ref{cor:cut out}, the nebentype is the quadratic character associated to $\Q(\sqrt{2})/\Q$. We conclude by noting that $\Jac(C_\alpha)$ does not admit a model up to isogeny defined over $\Q$, since the cohomology class $\gamma_\alpha$ is nontrivial (see \cite[\S8]{Rib92} and \cite[\S5]{Pyl04}).
  
\end{example}

\section{A criterion for the triviality of endomorphisms}\label{section: criterion}

Let $k$ be a number field and let $A$ be an abelian variety defined over $k$ and of dimension $g\geq 1$. Throughout this section $\p$ denotes a nonzero prime ideal of the ring of integers $\mathcal O_k$ of $k$ which is of good reduction for $A$. Let $A_{\p}$ denote the reduction of $A$ modulo $\p$, an abelian variety defined over the residue field $\FF_\p$ of $\mathcal O_k$ at $\p$. We will write $A_{\p,n}$ to denote the base change of $A_{\p}$ to the degree $n$ extension of $\FF_{\p}$. Let 
$$
P(A_{\p},T)=\prod_{i=1}^{2g}(1-\alpha_{i,\p}T)
$$ 
denote the reciprocal of the characteristic polynomial Frobenius of $A_{\p}$. Then
$$
P(A_{\p,n},T)=\prod_{i=1}^{2g}(1-\alpha_{i,\p}^nT).
$$
We will say that \emph{$\p$ is stably irreducible for $A$} if $P(A_{\p,n},T)$ is irreducible for all $n\geq 1$.

\begin{lemma}
Suppose that $P(A_{\p},T)$ is irreducible. Then $\p$ is stably irreducible for $A$ if and only if for all $i\not=j$ the quotient $\alpha_{i,\p}/\alpha_{j,\p}$ is not a root of unity. 
\end{lemma}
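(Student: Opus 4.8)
The plan is to reduce the statement to an elementary fact about the multiset of Frobenius eigenvalues $\alpha_i := \alpha_{i,\p}$, $1\le i\le 2g$, exploiting that over a field of characteristic $0$ an irreducible polynomial is separable and has its roots forming a single Galois orbit. Recall from the discussion above that $P(A_{\p,n},T)=\prod_{i=1}^{2g}(1-\alpha_i^{n}T)$, and note that the $\alpha_i$ are nonzero, being Weil numbers of absolute value $|\FF_\p|^{1/2}$.

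The key step I would establish first is the following equivalence: for a fixed $n\ge 1$, the polynomial $P(A_{\p,n},T)$ is irreducible over $\Q$ if and only if $\alpha_1^{n},\dots,\alpha_{2g}^{n}$ are pairwise distinct. For the forward implication, if two of these powers coincide then $P(A_{\p,n},T)$ has a repeated root, hence is inseparable, hence is not irreducible over $\Q$ (its degree being $2g\ge 2$). For the converse, suppose the $\alpha_i^{n}$ are pairwise distinct. Since $P(A_\p,T)$ is irreducible, $\Gal(\Qbar/\Q)$ permutes the $\alpha_i$ transitively; applying $\sigma\in\Gal(\Qbar/\Q)$ with $\sigma(\alpha_i)=\alpha_j$ gives $\sigma(\alpha_i^{n})=\alpha_j^{n}$, so $\Gal(\Qbar/\Q)$ also permutes the (now distinct) elements $\alpha_i^{n}$ transitively. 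Thus $P(A_{\p,n},T)$ is, up to a nonzero scalar, the minimal polynomial over $\Q$ of any one of the $1/\alpha_i^{n}$, hence irreducible.

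Granting this equivalence, the lemma is immediate: $\p$ is stably irreducible for $A$ exactly when, for every $n\ge 1$, the powers $\alpha_1^{n},\dots,\alpha_{2g}^{n}$ are pairwise distinct; equivalently, when $(\alpha_i/\alpha_j)^{n}\ne 1$ for all $n\ge 1$ and all $i\ne j$ (using $\alpha_j\ne 0$); equivalently, when no quotient $\alpha_i/\alpha_j$ with $i\ne j$ is a root of unity. I do not anticipate any genuine obstacle here; the only point deserving care is to phrase everything consistently in terms of irreducibility over $\Q$ (equivalently, by Gauss's lemma, primitive irreducibility over $\Z$) and to invoke the separability of irreducible polynomials in characteristic $0$ in both directions of the displayed equivalence.
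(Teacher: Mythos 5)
Your proof is correct and follows essentially the same route as the paper: you use that irreducibility of $P(A_{\p},T)$ makes the $\alpha_{i,\p}$ a single Galois orbit, so $P(A_{\p,n},T)$ can only fail to be irreducible through coincidences $\alpha_{i,\p}^n=\alpha_{j,\p}^n$, i.e.\ through a quotient $\alpha_{i,\p}/\alpha_{j,\p}$ being a root of unity. The paper packages the same argument as the equivalence ``reduces $\Leftrightarrow$ repeated factor $\Leftrightarrow$ proper perfect power,'' while you state it directly as ``irreducible $\Leftrightarrow$ the $\alpha_{i,\p}^n$ are pairwise distinct''; the content is identical.
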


\begin{proof}
Since $P(A_{\p},T)$ is irreducible, Galois theory shows that the following are equivalent for $P(A_{\p,n},T)$:
\begin{enumerate}[i)]
\item It reduces. 
\item It has a repeated irreducible factor.
\item It is a proper perfect power. 
\end{enumerate}
Hence $P(A_{\p,n},T)$ reduces if and only if there exist $i\not=j$ such that the quotient $\alpha_{i,\p}/\alpha_{j,\p}$ is a root of unity. 
\end{proof}

We next give a more computational version of the previous lemma. Let $\Phi_t(T)$ denote the $t$-th cyclotomic polynomial. Define the polynomial
$$
f(T)=\Res_z(z^{2g}P(A_{\p},1/z),z^{2g}P(A_{\p},T/z))\in \Z[T].
$$
Let $\phi$ denote Euler's totient function. 

\begin{lemma}\label{lemma: stably test}
Suppose that $P(A_{\p},T)$ is irreducible. Then $\p$ is stably irreducible for $A$ if and only if $\Phi_t(T)$ does not divide $f(T)$ for any $t\geq 2$ such that $\phi(t)\leq 4g^2$.
\end{lemma}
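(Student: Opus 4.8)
The plan is to reduce the statement to the previous lemma, which (under the standing hypothesis that $P(A_\p,T)$ is irreducible) characterizes stable irreducibility by the condition that no ratio $\alpha_{i,\p}/\alpha_{j,\p}$ with $i\neq j$ is a root of unity. So it will be enough to show that this root-of-unity condition is exactly the divisibility condition on $f(T)$, with the bound $\phi(t)\leq 4g^2$ only serving to turn an a priori infinite check into a finite one.

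First I would compute $f$ explicitly. Since $z^{2g}P(A_\p,1/z)=\prod_{i}(z-\alpha_{i,\p})$ and $z^{2g}P(A_\p,T/z)=\prod_{i}(z-\alpha_{i,\p}T)$ are both monic in $z$ of degree $2g$, the resultant-of-monic-polynomials formula gives
\[
f(T)=\prod_{1\leq i,j\leq 2g}(\alpha_{i,\p}-\alpha_{j,\p}T).
\]
Because each $\alpha_{j,\p}$ is a $q$-Weil number and hence nonzero, this is a polynomial in $T$ of degree exactly $4g^2$, and its multiset of roots is $\{\alpha_{i,\p}/\alpha_{j,\p}\}_{i,j}$.

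Next come the two implications. For one direction: if $\p$ is not stably irreducible, the previous lemma yields $i\neq j$ with $\zeta:=\alpha_{i,\p}/\alpha_{j,\p}$ a root of unity; since $P(A_\p,T)$ is irreducible it is separable, so the $\alpha_{i,\p}$ are distinct and $\zeta\neq 1$, i.e. $\zeta$ has some order $t\geq 2$. Then $\zeta$ is a root of $f\in\Z[T]$, so its minimal polynomial $\Phi_t$ divides $f$ in $\Q[T]$, and by Gauss's lemma (as $\Phi_t$ is monic) also in $\Z[T]$; moreover $\phi(t)=\deg\Phi_t\leq\deg f=4g^2$. Conversely, if $\Phi_t\mid f$ for some $t\geq 2$ with $\phi(t)\leq 4g^2$, then a primitive $t$-th root of unity $\zeta$ satisfies $\prod_{i,j}(\alpha_{i,\p}-\alpha_{j,\p}\zeta)=0$, so $\alpha_{i,\p}=\zeta\alpha_{j,\p}$ for some $i,j$; since $\zeta\neq 1$ and the $\alpha_{i,\p}$ are distinct we must have $i\neq j$, and the previous lemma shows $\p$ is not stably irreducible.

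I do not expect a serious obstacle here: the argument is essentially bookkeeping once $f$ is identified. The points to handle carefully are the resultant identity for monic polynomials, the separability of $P(A_\p,T)$ (needed so that a root-of-unity ratio forces $i\neq j$), and the Gauss's lemma step. The only conceptual remark is that the hypothesis $\phi(t)\leq 4g^2$ is automatic whenever $\Phi_t\mid f$ (as $\deg\Phi_t\leq\deg f=4g^2$), so imposing it loses nothing while bounding the $t$'s one must test.
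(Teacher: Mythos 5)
Your proposal is correct and follows essentially the same route as the paper: identify $f(T)$ (via the resultant of the monic polynomials $\prod_i(z-\alpha_{i,\p})$ and $\prod_i(z-\alpha_{i,\p}T)$) as $\prod_{i,j}(\alpha_{i,\p}-\alpha_{j,\p}T)$, so that its roots are exactly the ratios $\alpha_{i,\p}/\alpha_{j,\p}$, use irreducibility (hence separability) of $P(A_\p,T)$ to exclude ratio $1$ for $i\neq j$, and invoke the preceding lemma, with the bound $\phi(t)\leq 4g^2$ coming from $\deg f=4g^2$. Your explicit mention of Gauss's lemma and of the nonvanishing of the $\alpha_{j,\p}$ only makes explicit details the paper leaves implicit.
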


\begin{proof}
Since the reciprocal roots of $P(A_\p,T)$ come in complex conjugate pairs $\alpha_{i,\p},\overline \alpha_{i,\p}$ with product equal to $\Nm(\p)$, we observe that
$$
f(T)=\prod_{i,j=1}^{2g}\left( \alpha_{j,\p}T-\alpha_{i,\p}\right)=\Nm(\p)^{2g^2}\prod_{i,j=1}^{2g}\left(T- \frac{\alpha_{i,\p}}{\alpha_{j,\p}}\right).
$$
For any $i\not = j$, the quotient $\alpha_{i,\p}/\alpha_{j,\p}$ is not 1, since otherwise $P(A_{\p},T)$ would be reducible. Hence, for $i\not = j$, the quotient $\alpha_{i,\p}/\alpha_{j,\p}$ is a root of unity if and only if it has finite order $t\geq 2$ satisfying that $\phi(t)\leq 4g^2$. 
\end{proof}

Let $\p$ be stably irreducible, and let $K^{\p,n}$ denote the field $\Q[T]/(P(A_{\p,n},T))$. If $\alpha$ is a reciprocal root of $P(A_{\p},T)$, then $K^{\p,n}=\Q(\alpha^ n)$. Hence $K^{\p,n}\subseteq K^{\p,1}$. We will simply write $K^{\p}$ to denote $K^{\p,1}$.

\begin{proposition}\label{prop:criterion}
Let $A$ be an abelian variety defined over $k$. Suppose that there exist two distinct primes $\p$ and $\fq$ that are stably irreducible for $A$ and such that $K^{\p}$ and $ K^{\fq}$ are linearly disjoint. Then $\End(A_\Qbar)=\ZZ$.
\end{proposition}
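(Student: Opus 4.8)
The plan is to combine the information at $\p$ and $\fq$ to pin down $\End^0(A_\Qbar)$ among the short list of possibilities for an abelian surface. First I would record that, by Proposition \ref{proposition: irreducibility}, the existence of a single stably irreducible prime already forces $A_\Qbar$ to be simple and without quaternionic multiplication. Consulting Albert's classification for $\dim = 2$, the only remaining possibilities for $\End^0(A_\Qbar)$ are $\Q$, a real quadratic field (RM), a quartic CM field, or a (necessarily imaginary) quadratic field acting on an abelian surface with extra CM structure — but in dimension $2$ a simple abelian variety whose endomorphism algebra strictly contains $\Z$ and is not a quaternion algebra must be of type I with a real quadratic field, or of type IV with a quartic CM field (the imaginary quadratic case does not occur for simple surfaces). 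So the goal reduces to ruling out RM by a real quadratic field $M$ and CM by a quartic field $M$.

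The key mechanism is that $M \hookrightarrow \End^0(A_\Qbar)$ embeds into $\End^0(A_{\p,n})$ for suitable $n$ (taking $n$ large enough that all geometric endomorphisms are defined over $\FF_{\p^n}$ and similarly at $\fq$), hence $M$ embeds into the CM algebra $\Q(\alpha_{\p}^n)= K^{\p,n}= K^{\p}$ (using stable irreducibility, which gives $K^{\p,n} = K^{\p}$ for all $n$; here I use the observation preceding the proposition that $K^{\p,n}\subseteq K^{\p}$, together with $[K^{\p,n}:\Q] = 2g = 4$ coming from irreducibility of $P(A_{\p,n},T)$). Thus $M$ — or rather an isomorphic copy of it — sits inside $K^{\p}$; likewise inside $K^{\fq}$. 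In the CM case $[M:\Q]=4$ would force $K^{\p} = M = K^{\fq}$, contradicting $K^{\p}\cap K^{\fq} = \Q$. In the RM case one argues that a real quadratic subfield of the quartic CM field $K^{\p}$ is unique (it is the maximal totally real subfield $K^{\p,+}$), and the real quadratic field $M$ must coincide with it; the same at $\fq$, so $M \subseteq K^{\p}\cap K^{\fq} = \Q$, again a contradiction. Here is where ordinariness enters: I would use it to guarantee that $K^{\p}$ is a CM field of degree exactly $4$ (for an ordinary simple abelian surface over a finite field, the Frobenius generates a quartic CM field, by Honda–Tate / Tate's theorem), so that "$M$ sits in $K^{\p}$" is a statement about an honest CM field and its unique real quadratic subfield, and the action of $\End^0$ on the Tate module is compatible with the embedding $M\hookrightarrow K^{\p}$.

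The main obstacle I anticipate is making the embedding $\End^0(A_\Qbar)\hookrightarrow K^{\p}$ genuinely canonical enough that "the real quadratic subfield of $K^{\p}$" is well defined and forced to be $M$: a priori $\End^0(A_{\p^n})$ could be strictly larger than (the image of) $\End^0(A_\Qbar)$, e.g. the reduction could acquire extra endomorphisms. To handle this I would note that in all cases $\End^0(A_{\p^n})$ is a subalgebra of $\End^0(A_{\p^n}\otimes\overline{\FF_p})$, which for an ordinary simple abelian surface is commutative — it is the quartic CM field $K^{\p}$ (ordinariness is exactly what rules out the supersingular, non-commutative case). So $\End^0(A_\Qbar)$, being realized inside $\End^0(A_{\p^n})\subseteq K^{\p}$, is a subfield of $K^{\p}$; a real quadratic such subfield must be $K^{\p,+}$, and a quartic CM such subfield must be all of $K^{\p}$. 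Running this at both $\p$ and $\fq$ and intersecting gives $\End^0(A_\Qbar) = \Q$, i.e. $\End(A_\Qbar) = \Z$. The remaining bookkeeping — that $n$ can be chosen uniformly, that $P(A_{\p,n},T)$ stays irreducible of degree $4$, and that reduction of endomorphisms is injective — is routine.
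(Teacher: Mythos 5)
Your core mechanism is the same as the paper's: pass to a field $F$ over which all endomorphisms of $A_{\Qbar}$ are defined, reduce at a prime above $\p$ so that $\End^0(A_{\Qbar})$ embeds into $\End^0(A_{\p,n})$, and use Tate's theorem together with ordinariness (to get splitness at $p$, hence commutativity) and irreducibility of $P(A_{\p,n},T)$ (to get simplicity of the reduction) to identify $\End^0(A_{\p,n})$ with the quartic field $K^{\p,n}\subseteq K^{\p}$; doing this at both $\p$ and $\fq$ and invoking $K^{\p}\cap K^{\fq}=\Q$ finishes. The paper runs exactly this argument, but with no case division at all: $L=\End^0(A_{\Qbar})$ is a field by Proposition \ref{proposition: irreducibility}, and then $L\subseteq K^{\p,n}\cap K^{\fq,n}\subseteq K^{\p}\cap K^{\fq}=\Q$.

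There is, however, one genuine error in your write-up: the Albert-classification step asserts that a simple abelian surface cannot have endomorphism algebra an imaginary quadratic field. That is false; simple abelian surfaces of type IV with $\End^0$ an imaginary quadratic field do exist (Shimura's existence results \cite{Shi63} cover this case), and Proposition \ref{proposition: irreducibility} only excludes quaternionic multiplication, not commutative quadratic structure of CM type. As structured (``reduce to ruling out real quadratic RM and quartic CM''), your proof therefore skips a case. The omission is easy to repair, and your own final paragraph already contains the repair: since $L$ is a subfield of the quartic field $K^{\p}$ and likewise of $K^{\fq}$, any $L\neq\Q$ — real quadratic, imaginary quadratic, or quartic — contradicts $K^{\p}\cap K^{\fq}=\Q$. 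This uniform subfield argument (which is what the paper uses) makes the RM/CM dichotomy, and in particular the discussion of the unique totally real quadratic subfield of $K^{\p}$, unnecessary.
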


\begin{proof}
 There exists a finite extension $k'/k$ such that $\End^0(A_\Qbar)=\End^0(A_{k'})$. Let $n$ be an integer divisible both by the residue degree of $\p$ and the residue degree of $\fq$ in $k'/k$. Then we have inclusions
 \begin{equation}\label{eq:ld1}
\End^0(A_{k'})\subseteq \End^0(A_{\p,n}) \text{ and }  \End^0(A_{k'})\subseteq \End^0(A_{\fq,n}).   
 \end{equation}\label{eq:ld2}
By \cite[Thm. 2, (c)]{Tat66}, the stably irreducibility of $\p$ and $ \fq$ implies that
\begin{equation}
  \End^0(A_{\p,n})\simeq K^{\p,n}\text{ and  }\End^0(A_{\fq,n})\simeq K^{\fq,n}.
\end{equation}
Since $K^{\p,n} \subseteq K^{\p}$, $K^{\fq,n} \subseteq K^{\fq}$, and $K^\p$ and $K^\fq$ are linearly disjoint, we deduce that $\End^0(A_{\Qbar})=\Q$.
\end{proof}

\end{document}